\def\RR{{\mathbb{R}}}
\def\NN{{\mathbb{N}}}
\newcommand{\nexto}{\kern -0.54em}
\newcommand{\dZ}{{\cal Z \kern -0.7em Z}}
\newcommand{\dC}{{\rm\hbox{C \kern-0.8em\raise0.2ex\hbox{\vrule height5.4pt width0.7pt}}}}
\newcommand{\dQ}{{\rm\hbox{Q \kern-0.85em\raise0.25ex\hbox{\vrule height5.4pt width0.7pt}}}}
\newcommand{\norm}[1]{||#1||}
\newcommand{\set}[1]{\left\{#1\right\}}
\newcommand{\dist}[2]{\textnormal{dist}\left(#1,#2\right)}
\newcommand{\inpr}[2]{\left\langle#1,#2\right\rangle}
\def\RR{{\mathbb{R}}}
\newcommand{\R}{\mathbb{R}}
\newcommand{\HH}{\mathbb{H}}
\newcommand{\N}{\mathbb{N}}
\def\NN{{\mathbb{N}}}
\newcommand{\dom}[1]{\textnormal{dom}\,#1}
\newcommand{\Ell}{\mathfrak{L}}
\newtheorem{algorithm}{Algorithm}
\newtheorem{theorem}{Theorem}
\newtheorem{lemma}{Lemma}
\newtheorem{proposition}{Proposition}
\newtheorem{definition}{Definition}
\newtheorem{corollary}{Corollary}
\begin{document}

\title{ Inexact proximal $\epsilon$-subgradient methods for composite convex optimization problems}
\setcounter{footnote}{1}

\author{
R. Díaz Millán%
\thanks{Federal Institute of Goi\'as, Goiâna,
Brazil} \thanks{IME Federal University of Goi\'as, Goiâna,
Brazil ,
        \href{mailto:rdiazmillan@gmail.com}{\tt rdiazmillan@gmail.com}.}
\and
M. Pentón Machado%
\thanks{Universidade Federal da Bahia, Bahia, BA, Brazil,
        \href{mailto:majentcha@gmail.com}{\tt majentcha@gmail.com}.}}

\date{\small \today}

\maketitle

\begin{abstract}

We present two approximate versions of the proximal subgradient method for minimizing the sum of two convex functions (not necessarily differentiable). At each ite\-ration, the algorithms require inexact evaluations of the proximal operator, as well as, approximate subgradients of the functions (namely: the $\epsilon$-subgradients). The methods use different error criteria for approximating the proximal operators. 
We  provide an analysis of the convergence and rate of convergence properties of these
methods, considering various stepsize rules, including both, diminishing and constant stepsizes.
For the case where one of the functions is smooth, we propose an inexact accelerated version of the proximal gradient method, and prove that the optimal convergence rate for the function values can be achieved. Moreover, we provide some numerical experiments comparing our algorithm with similar recent ones.

\end{abstract}

\bigskip
\noindent {\small {\bf Keywords.}
Splitting methods; Optimization problem; $\epsilon$-subdifferential; Inexact methods; Hilbert space; Accelerated methods.}
\medskip

\noindent {\small {\bf AMS Classification (2010):} 65K05, 90C25, 90C30, 49J45.}


\section{Introduction}
\label{sec:int}

Given a Hilbert space $\HH$\, and\, convex\, functions\, $f:\dom f\subseteq \HH \rightarrow \left]-\infty,\infty\right]$ and $g:\dom g\subseteq \HH \rightarrow \left]-\infty,\infty\right]$, not necessarily differentiable, the problem of interest consists of 
\begin{equation}
\label{eq:problem}
\min_{x\in C} \, f(x) + g(x),
\end{equation}
where $C\subseteq \HH$ is a convex and closed set. The wide variety of applications of problem \eqref{eq:problem} 
involving optimization problem, physics, image processing, statistics, engineering, economy, and mathematics, in 
general, can be explored in \cite{bertsekas2014convex, Bert2015, lemarechal_2, Pol87}  and references therein.

As a special case of problem \eqref{eq:problem} we have the extensively studied constrained convex minimization 
problem
\begin{equation}
\label{eq:constrained-prob}
\min_{x\in C} \, f(x)
\end{equation}
(obtained simply by taking $g\equiv0$ in \eqref{eq:problem}).

\emph{Splitting algorithms} for solving optimization problems, variational inequalities, inclusion and 
equilibrium problems have been exhaustively studied in the last years, see \cite{BelloDiaz, bellocruz1, bot, 
diaz, phdthesis, gli, fadili} and references therein. The idea is ancestral: {\it divide et impera} (divide and 
conquer). When we have difficult and structured problems, {\it splitting} is one of the most important and 
systematic techniques for the development of efficient algorithms for solving them. 

Several splitting algorithms have been used to solve problem \eqref{eq:problem}. One of the most famous is the 
\emph{proximal subgradient} method, or \emph{forward-backward} method, which was proposed by Lions and Mercier 
\cite{liomer} and Passty in \cite{PASSTY1979383}, for the more general problem of  finding a zero of the sum of 
two point-to-set operators. 
For the case of solving the constrained minimization problem \eqref{eq:constrained-prob}, the proximal subgradient 
method reduces to the well-known \emph{projected subgradient} method. Subgradient methods were originally developed by Shor and others in the 1970s and, since then, they have been extensively studied due to their applicability to a wide 
variety of problems and efficient use of memory. A basic reference on subgradient methods is Shor's book 
\cite{shor}.
For the general problem \eqref{eq:problem}, the proximal subgradient method has been recently studied, see for 
instance \cite{bertsekas2014convex,Bert2015, Bert2011}\footnote{In \cite{Bert2011} and \cite{Bert2015}, it is 
actually considered incremental proximal subgradient methods for problem \eqref{eq:problem}, when the objective
function is the sum of a large number of convex functions.} for the finite dimensional case, and 
\cite{BelloCruz2016} for infinite dimensional Hilbert spaces. Recently, the reference \cite{tebou} studied the problem through a non-Euclidean proximal distance of Bregman type. 

The proximal subgradient method combines at each iteration a subgradient step of $f$ with a proximal step of $g$. 
The highest drawback in implementing this method is the calculation of the {\it proximal} (or {\it resolvent}) operator, which in the case of \eqref{eq:constrained-prob} is exactly the \emph{projection} operator onto the set $C$. 
The difficulty lies in the fact that the proximal operator is in general not available in exact form or its computation may be very demanding.
Hence, with the view of improving the computational behavior of the proximal subgradient method,
one may consider relaxed procedures to compute this operator.
One of our goals in this work is to relax the proximal subgradient method by allowing approximated evaluations of the resolvent operators, and to analyze the convergence of these modified schemes. 

We point out that there are some works in the literature dealing with inexact calculations of the resolvent operator in proximal subgradient methods. For instance, in \cite{BirMarRay2003} it was introduced an inexact \emph{projected gradient} method for solving \eqref{eq:constrained-prob} and in \cite{schmidt2011} it was presented an inexact \emph{proximal gradient} method for problem \eqref{eq:problem}. However, these works only consider the case where the function $f$ is differentiable.

Another difficult task in applying the proximal subgradient method could be the necessity of finding a subgradient of some complicated non-differentiable function. Hence, in some context, it may be convenient to use an approximation of the subgradient.  
The introduction in the literature of the $\epsilon$-subdifferential concept in \cite{rocke} has been of great 
importance for the development of computationally more efficient algorithms. 
Several algorithms were implemented using this approximation of the subgradient. 
Probably the most studied are the $\epsilon$-subgradient and projected $\epsilon$-subgradient methods, which extend subgradient methods to solve \eqref{eq:constrained-prob}. A large number of articles have been published on this subject, see \cite{burachik, elias, gou, andrea} and references therein.

In this paper, we present two inexact algorithms for solving \eqref{eq:problem}, which will be referred to as \emph{inexact proximal $\epsilon$-subgradient} methods (P$\epsilon$-SMs).
Our schemes follow the ideas of the $\epsilon$-subgradient and inexact \emph{proximal point}
methods \cite{Rock1976,sol_sv_hpe}. Using these two techniques we relax the known schemes in the literature, see 
\cite{BelloCruz2016, bertsekas2014convex, Bert2015} for similar problems. Since our problem is implemented for 
non-smooth functions, the calculus of the subdifferential is substantially enhanced by using the $\epsilon$-subdifferential. 
The inexact P$\epsilon$-SMs proposed here differ basically in the way they calculate an approximation of the 
proximal operator.
The first method uses an \emph{absolute summable} error criterion, which
was introduced in \cite{Rock1976} by Rockafellar for the proximal point algorithm.
Our second method employs the \emph{relative} error criterion proposed in \cite{sol_sv_hpe}, where a \emph{hybrid extragradient proximal point} algorithm was studied. Several rules for the stepsize are presented. 
For both methods we consider constant and diminishing stepsizes 
and also the Polyak stepsize rule, introduced in \cite{Pol87}. These different ways to choose the stepsizes give us interesting behaviors for the sequences generated by the algorithms.

We will show that the inexact P$\epsilon$-SMs have a convergence rate of $\mathcal{O}(1/\sqrt{k})$, which is the same slow speed of convergence exhibited by projected subgradient methods. 
Since Nesterov showed in \cite{Nest2004} that the projected subgradient method is optimal for the general problem \eqref{eq:constrained-prob} (when $f$ is non-differentiable), we cannot expect a better convergence rate for the general problem \eqref{eq:problem}. 
However, if $f$ is differentiable then proximal gradient methods have a convergence rate of $
\mathcal{O}(1/k)$ for solving the composite problem \eqref{eq:problem}.
Furthermore, these methods can be accelerated using Nesterov techniques \cite{nest,fista,Nest2013} to achieve the 
optimal convergence rate of $\mathcal{O}(1/k^2)$.
As discussed before, since the proximal operator may not have an analytic solution, or it may be very expensive to compute exactly, there is an interest in the study of the convergence of accelerated proximal gradient methods with inexact calculation of the resolvent mapping (see \cite{schmidt2011,Villa2013}).

Our second goal in this work is to present an \emph{inexact accelerated proximal gradient} method (PGM) for 
solving \eqref{eq:problem}, and to show that it achieves the optimal convergence rate. 
The main difference between our method and the algorithms proposed in \cite{schmidt2011} and \cite{Villa2013} is the inexactness notion used to evaluate the proximal operator. 
Indeed, we use the relative error criterion proposed in \cite{sol_sv_unif} for \emph{maximal monotone operators}. 
This could bring an advantage in practice when compared to the absolute error criteria proposed in 
\cite{schmidt2011} and \cite{Villa2013}, since in our method we fix a tolerance in advance rather than requiring 
the errors decrease in a certain way. 
Of course, to confirm this assertion it is needed an exhaustive numerical verification with a large sample of problems, and this verification is not the goal of our work. However, in subsection \ref{sub:numer2}, we present preliminary computational experiments which suggest an advantage in using relative error criterion over absolute error criterion.


The manuscript is organized as follows. In Section \ref{prelim} we introduce the notation and some preliminaries that will be used in the remainder of the paper. In Sections \ref{asbt} and Section \ref{relat} we introduce the inexact P$\epsilon$-SMs based on absolute and relative error criteria, respectively. The convergence analyses of these two algorithms with different stepsize rules are presented in Section \ref{converg}. In Section \ref{sec:acel} an inexact accelerated version in the Nesterov's sense is presented. Section \ref{numer} presents numerical examples and comparisons. In Section \ref{sec:con} we present some conclusions.

\section{Preliminaries}\label{prelim}

In this section we describe some classic definitions and results that will be needed along this work. 

First of all, we present the notation.
Throughout this paper, we let $\HH$ denote a Hilbert space with inner product and induced norm denoted by $\inpr{\cdot}{\cdot}$ and $\norm{\cdot}$, respectively. We also define the spaces $\R_{+}$ and $\R_{++}$ as $\R_{+}:=\set{x\in\R\,:\,x\geq0}$ and $\R_{++}:=\set{x\in\R\,:\,x>0}$. We denote by $S^\star$ and $s^\star$ the solution set and optimal value of problem \eqref{eq:problem}, respectively. Given a non-empty convex closed set $C\subset\HH$, we let $P_C(\cdot)$ denote the projection operator onto $C$.

Given an extended real valued convex function $f:\HH\rightarrow\left]-\infty,\infty\right]$, the domain of $f$ is the set $\dom f:=\set{x\in\HH\,:\,f(x)<\infty}$.
Since $f$ is a convex function, it is clear that $\dom{f}$ is convex. We say that function $f$ is \emph{proper} if $\dom f\neq\emptyset$. Furthermore, we say that $f$ is \emph{closed} if it is a lower semicontinuous function.

\begin{definition}
Given a convex function $f:\HH\rightarrow\left]-\infty,\infty\right]$, a vector $v\in\HH$ is called a \emph{subgradient} of $f$ at $x\in\HH$, if
\begin{equation*}
f(x') \geq f(x)+\inpr{v}{x'-x}\qquad\qquad\forall x'\in\HH.
\end{equation*}
The set of all subgradients of $f$ at $x$ is denoted by $\partial f(x)$. The operator $\partial f$, which maps each $x$ to $\partial f(x)$, is called the \emph{subdifferential} map associated with $f$.
\end{definition}

It can be seen immediately from the definition that $x^\star$ is a global minimizer of $f$ in $\HH$ if and only if $0\in\partial f(x^\star)$. The subdifferential mapping of a convex function $f$ has the following \emph{monotonicity} property: for any $x$, $x'$, $v$ and $v'\in\HH$ such that $v\in\partial f(x)$ and $v'\in\partial f(x')$, it follows that
\begin{equation}
\label{eq:monotonicity}
\inpr{x-x'}{v-v'}\geq 0.
\end{equation}

\begin{definition}
Given any convex function $f:\HH\to\left]-\infty,\infty\right]$ and $\epsilon\geq0$, a vector $v\in\HH$ is called an $\epsilon$-\emph{subgradient} of $f$ at $x\in\HH$, if
\begin{equation*}
f(x')\geq f(x)+\inpr{v}{x'-x}-\epsilon\qquad\qquad\forall x'\in\HH.
\end{equation*}
The set of all $\epsilon$-subgradients of $f$ at $x$ is denoted by $\partial_\epsilon f(x)$, and $\partial_\epsilon f$ is called the $\epsilon$-\emph{subdifferential} mapping.
\end{definition}

It is trivial to verify that $\partial_0f(x)=\partial f(x)$, and $\partial f(x)\subseteq\partial_\epsilon f(x)$ for every $x\in\HH$ and $\epsilon\geq0$. The proposition below lists some useful properties of the $\epsilon$-subdifferential that will be needed in our presentation.

\begin{proposition}
\label{prop:sub-properties}
If $f:\HH\to\left]-\infty,\infty\right]$ and $g:\HH\to\left]-\infty,\infty\right]$ are proper closed convex functions,  then the following statements hold.
\begin{itemize}
\item[(i)] $\partial_{\epsilon_1} f(x) + \partial_{\epsilon_2}g(x)\subset\partial_{\epsilon_1+\epsilon_2}(f+g)(x)$ for all $x\in\HH$ and $\epsilon_1$, $\epsilon_2\geq0$.
\item[(ii)] If $v\in\partial f(x)$ and $f(z)<\infty$, then $v\in\partial_\epsilon f(z)$ where $\epsilon:=f(z)-f(x)-\inpr{v}{z-x}$.
\end{itemize}
\end{proposition}

\begin{proof}
Statements (i) and (ii) are classical results which can be found, for example, in \cite{lemarechal_2}. 
\end{proof}

Given $\alpha>0$, the \emph{proximal operator} (or \emph{resolvent operator}) \cite{moreau} associated with $\partial f$ is defined as
\begin{equation}
\label{eq:prox}
\textbf{prox}_{\alpha f}(z): = \arg\min_{x\in\HH} \big\{\alpha f(x) + \frac{1}{2}\norm{x-z}^2\big\},\qquad\qquad\forall z\in\HH.
\end{equation}
The fact that $\textbf{prox}_{\alpha f}:\HH\to\HH$ is an everywhere well defined function, if $f$ is a proper closed convex function, is a consequence of a fundamental result due to Minty \cite{minty}.

For evaluating $\textbf{prox}_{\alpha f}$ it is
necessary to solve a strongly convex minimization
problem. Since this problem could be hard to solve exactly, for instance, if
$f$ has a complicated algebraic expression, it is desirable to allow approximate
evaluations of the proximal operator. Several notions of inexactness of the resolvent operator
have been proposed in the literature. In this work we
are interested in three different criteria for  
approximating $\textbf{prox}_{\alpha f}$. The first general
criterion we treat here 
was proposed in \cite{Rock1976} by Rockafellar for the 
proximal point algorithm for maximal monotone operators.

\begin{definition}
\label{def:r-approx}
Given $r>0$, a point $x\in\HH$ is called an $r$-\emph{approximate solution} of $\textbf{prox}_{\alpha f}$ at $z$, if there exists $v\in\partial f(x)$ such that 
$$\norm{\alpha v + x-z}\leq r.$$
\end{definition}

We note that if $x$ is an $r$-approximate solution of $\textbf{prox}_{\alpha f}$ at $z$, then the distance between $x$ and $\textbf{prox}_{\alpha f}(z)$ is less than $r$. Indeed, defining $y=\textbf{prox}_{\alpha f}(z)$, by the first-order optimality condition of the minimization problem in \eqref{eq:prox}, we have that there exists $w \in\partial f(y)$ such that $\alpha w + y = z$. Thus,
\begin{equation}
\label{eq:nova}
\begin{split}
\norm{\alpha v + x - z}^2 = & \norm{\alpha v + x - y - \alpha w}^2 \\
= &\norm{x - y}^2 + 2\alpha\inpr{v - w}{x - y} + \norm{\alpha(v - w)}^2\\
\geq & \norm{x - y}^2,
\end{split}
\end{equation}
where the inequality above is due to the monotonicity property of the subdifferential map $\partial f$ (see equation \eqref{eq:monotonicity}). Therefore, we conclude that $\norm{x - \textbf{prox}_{\alpha f}(z)}\leq r$.

The other criteria we employ for approximating \eqref{eq:prox} were introduced in 
\cite{sol_sv_hpe} and \cite{sol_sv_unif}, where inexact versions of the proximal point algorithm were proposed. In this work, we actually use optimization versions of these \emph{relative} error criteria, where an inclusion in terms of the $\epsilon$\emph{-enlargement} of a maximal monotone operator is replaced by an inclusion in terms of the $\epsilon$-subdifferential of a proper closed convex function.

Before presenting these notions of inexact solution we observe that evaluating $\textbf{prox}_{\alpha f}(z)$ is equivalent to solving the \textit{proximal system}: find a pair $x,\,v\in\HH$ such that
\begin{equation}
\label{eq:proximal-system}
\left\lbrace
\begin{array}{l}
v \in\partial f(x),\\
\alpha v + x -z = 0.
\end{array}
\right.
\end{equation}


\begin{definition}\emph{\cite{sol_sv_hpe}}
\label{def:approx-sol}
Given $\sigma\in\left[0,1\right[$, a triplet $(x,v,\epsilon)\in\mathbb{H}\times\HH\times\R_{+}$ is called a $\sigma$\emph{-approximate solution} of \eqref{eq:proximal-system} at $(\alpha,z)$, if
\begin{equation}
\label{eq:approx-sol}
\begin{split}
& v\in \partial_{\epsilon} f(x),\\
& \norm{\alpha v + x - z}^{2} + 2\alpha\epsilon \leq \sigma^2\norm{x-z}^{2}.
\end{split}
\end{equation}
\end{definition}

\begin{definition}\emph{\cite{sol_sv_unif}}
\label{def:approx-sol2}
Given $\sigma\in\left[0,1\right[$, a triplet $(x,v,\epsilon)\in\mathbb{H}\times\HH\times\R_{+}$ is called a $\sigma$\emph{-quasi-approximate solution} of \eqref{eq:proximal-system} at $(\alpha,z)$, if
\begin{equation}
\label{eq:approx-sol2}
\begin{split}
& v\in \partial_{\epsilon} f(x),\\
& \norm{\alpha v + x - z}^{2} + 2\alpha\epsilon \leq \sigma^2(\norm{\alpha v}^2 + \norm{x-z}^{2}).
\end{split}
\end{equation}
\end{definition}

We observe that if $(x,v)$ is the exact solution of (\ref{eq:proximal-system}), then taking $\epsilon=0$ the triplet $(x,v,\epsilon)$ satisfies the approximation criteria
(\ref{eq:approx-sol}) and \eqref{eq:approx-sol2} for all $\sigma\in\left[0,1\right[$. Conversely, if $\sigma=0$ only the exact solution of (\ref{eq:proximal-system}), taking $\epsilon=0$, will satisfy (\ref{eq:approx-sol}) and \eqref{eq:approx-sol2}. For $\sigma\in\left]0,1\right[$ system \eqref{eq:proximal-system} has at least one, and typically many, approximate solutions in the sense of Definitions \ref{def:approx-sol} and \ref{def:approx-sol2}.

The convergence analysis of a wide range of optimization algorithms relies on the notions of quasi-Fejér and Fejér convergence, which were originated in \cite{QuasiFejer69} in the context of sequences of random 
variables. We finish this section by introducing these concepts, and two important properties of quasi-Fejér 
convergent sequences that will be needed in our presentation. For more insight on this subject we refer the 
reader to \cite{Comb001}.

\begin{definition}
\label{def:quasi-fejer}
Let $S$ be a non-empty set of $\HH$. A sequence $(x^k)_{k\in\N}$ is quasi-Fejér convergent to $S$ if for all $x\in S$ there is a sequence $(r_k)_{k\in\N}\in\ell_1(\R)$  such that $\norm{x^{k+1}-x}^2\leq\norm{x^k-x}^2+r_k$, for all integer $k\geq0$. If $r_k=0$ for all $k\in\N$, we say that $(x^k)_{k\in\N}$ is Fejér convergent to $S$.
\end{definition}

\begin{proposition}
\label{prop:fejer}
If the sequence $(x^k)_{k\in\N}$ is quasi-Fejér convergent to $S$, then
\begin{itemize}
\item[(i)] $(x^k)_{k\in\N}$ is bounded.
\item[(ii)] $(x^k)_{k\in\N}$ converges weakly to some point in $S$ if and only if all weak accumulation points of  $(x^k)_{k\in\N}$ belong to $S$.
\end{itemize}
\end{proposition}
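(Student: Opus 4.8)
The plan is to treat the two statements separately, with part (i) feeding into part (ii), and to first isolate a preliminary observation that underlies the whole argument.

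For (i), I would fix an arbitrary $x\in S$ and iterate the defining inequality. Since $\norm{x^{k+1}-x}^2\leq\norm{x^k-x}^2+r_k$ with $(r_k)_{k\in\N}\in\ell_1(\R)$, telescoping from $0$ to $k-1$ yields $\norm{x^k-x}^2\leq\norm{x^0-x}^2+\sum_{j=0}^{\infty}|r_j|$, a finite bound independent of $k$; hence $(x^k)_{k\in\N}$ is bounded. The crucial preliminary observation is that, for each fixed $x\in S$, the real sequence $(\norm{x^k-x}^2)_{k\in\N}$ actually \emph{converges}. To obtain this, I would set $c_k:=\norm{x^k-x}^2+\sum_{j\geq k}|r_j|$ and verify, using the defining inequality together with $r_k\leq|r_k|$, that $(c_k)_{k\in\N}$ is nonincreasing and bounded below by $0$, hence convergent; since the tail $\sum_{j\geq k}|r_j|\to0$, the sequence $\norm{x^k-x}^2$ converges to the same limit.

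For (ii), the ``only if'' direction is immediate, since a weakly convergent sequence has a unique weak accumulation point, namely its weak limit, which belongs to $S$ by assumption. For the converse, I would use (i) together with the weak sequential compactness of bounded sets in $\HH$ to guarantee that $(x^k)_{k\in\N}$ has at least one weak accumulation point, which lies in $S$ by hypothesis. The key step is uniqueness: supposing $x^\star,y^\star\in S$ are two weak accumulation points, I expand $\norm{x^k-x^\star}^2-\norm{x^k-y^\star}^2=-2\inpr{x^k}{x^\star-y^\star}+\norm{x^\star}^2-\norm{y^\star}^2$ and invoke the preliminary observation (both squared distances converge) to conclude that $(\inpr{x^k}{x^\star-y^\star})_{k\in\N}$ converges. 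Passing to this limit along a subsequence weakly convergent to $x^\star$ and along one weakly convergent to $y^\star$ forces $\inpr{x^\star}{x^\star-y^\star}=\inpr{y^\star}{x^\star-y^\star}$, i.e. $\norm{x^\star-y^\star}^2=0$, so $x^\star=y^\star$. A bounded sequence with a single weak accumulation point converges weakly to it, completing the proof.

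The main obstacle I anticipate is establishing the convergence of the distance sequence $(\norm{x^k-x}^2)_{k\in\N}$: the defining inequality only provides approximate monotonicity, so the $\ell_1$ summability of the errors must be absorbed through the shifted quantity $c_k$ in order to recover a genuinely monotone, hence convergent, sequence. Once this is secured, the uniqueness of weak accumulation points follows by a standard Opial-type argument, and the remaining steps are routine.
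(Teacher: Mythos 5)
Your proposal is correct. The paper itself does not prove this proposition---it is stated as a known fact with a pointer to the literature (Combettes' quasi-Fej\'erian analysis)---so there is no in-paper argument to compare against; your telescoping bound for (i), the monotonicity of the shifted quantity $c_k=\norm{x^k-x}^2+\sum_{j\geq k}|r_j|$ to get convergence of the distances, and the Opial-type uniqueness argument for (ii) constitute exactly the standard proof found in those references.
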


\section{An inexact P$\epsilon$-SM with absolute error}\label{asbt}
In this section, we propose an inexact P$\epsilon$-SM for solving problem \eqref{eq:problem} that uses absolute summable error criteria. 
Algorithm \ref{alg:1} below generalizes the proximal subgradient method by allowing, at each iteration $k$, $\textbf{prox}_{\alpha_kg}$ to be approximately evaluated, where $\alpha_k>0$, provided that the inexact solution satisfies Definition \ref{def:r-approx} for a predefined $r_k$. This algorithm also includes the use of $\epsilon$-subgradients of function $f$ rather than just subgradients. 

In what follows we assume that:
\begin{enumerate}
\item[(A.1)] $\dom{g} \subseteq \dom{f}$,
\item[(A.2)] $C$ is such that $C \subseteq \text{int}(\dom{g})$.
\end{enumerate}

\fbox{\parbox{14.3cm}{
\begin{algorithm}
\label{alg:1}
Pick an initial point $x^0\in C$, a square summable sequence $(r_k)_{k\in\N}\subset\R_+$, and sequences $(\alpha_{k})_{k\in \mathbb{N}}\subset \mathbb{R}_{++}$ and $(\epsilon_{k})_{k\in \mathbb{N}}\subset \mathbb{R}_{+}$. For all $k=0,1,\dots$
\begin{enumerate}
\item Compute
\begin{equation*}
y^k = x^k - \alpha_ku^k, \qquad \textrm{where }\,\, u^k\in\partial_{\epsilon_k} f(x^k).
\end{equation*}
\item Compute $\overline{x}^k$ and $\overline{w}^k$ such that $\overline{w}^k\in\partial g(\overline{x}^k)$ and
\begin{equation*}
\norm{\alpha_k\overline{w}^k + \overline{x}^k - y^k} \leq r_k.
\end{equation*}
\item Set
\begin{equation*}
x^{k+1} = P_C(y^k - \alpha_k\overline{w}^k).
\end{equation*}
\end{enumerate}
\end{algorithm}}}

\vspace{2mm}

Several remarks are in order. 
First, we note that Algorithm \ref{alg:1} is well defined since $x^k\in C$ for every $k\geq0$, therefore $\partial f(x^k)\neq\emptyset$ by assumptions A.1 and A.2.

Second, we observe that Proposition \ref{prop:sub-properties}(ii) implies, for all $k\in\N$, that $\overline{w}^k\in\partial_{\overline{\epsilon}_k}g(x^k)$, where $\overline{\epsilon}_k:=g(x^k)-g(\overline{x}^k)-\inpr{\overline{w}^k}{x^k-\overline{x}^k}$. Furthermore, by Proposition \ref{prop:sub-properties}(i) we have that $u^k + \overline{w}^k\in\partial_{\epsilon_k+\overline{\epsilon}_k}(f+g)(x^k)$. Thus, Algorithm \ref{alg:1} can be seen as an $\epsilon$-subgradient projected method \cite{bertsekas2014convex} applied to the sum $f+g$. However, we stress here that our convergence analysis for Algorithm \ref{alg:1} does not use this relation.

Third, we note that Algorithm \ref{alg:1} actually recovers the proximal subgradient methods introduced in \cite{BelloCruz2016,Bert2015,Bert2011} for solving problem \eqref{eq:problem}. Indeed, assuming that $C=\dom{g}$, if we take $r_k=0$ and $\epsilon_k=0$ for all $k\in\N$, then we have that $u^k\in\partial f(x^k)$ and step  2 above implies $\alpha_k\overline{w}^k+\overline{x}^k-y^k=0$. Hence, $\overline{x}^k=\textbf{prox}_{\alpha_kg}(y^k)$ and step 3 in Algorithm \ref{alg:1} yields $x^{k+1}=\overline{x}^k$. Thus, steps 1-3 above reduce exactly to the set of recursions of the methods in \cite{Bert2015} (when $m=1$) and \cite{BelloCruz2016}.

We now present two technical results that will be needed in our convergence analysis in section \ref{sec:conv-ana}. Lemma \ref{lem:2} below establishes an important feature of the sequence $(x^k)_{k\in\N}$
generated by Algorithm \ref{alg:1}.
It is similar in spirit to key results on which the convergence of subgradient and proximal subgradient methods relies (see \cite[Lemma 2.1]{BelloCruz2016}, \cite[Proposition 3.1]{Bert2015} and \cite[Proposition 6.3.1]{bertsekas2014convex}).

\begin{lemma}
\label{lem:2}
If $(x^k)_{k\in\N}$, $(u^k)_{k\in\N}$ and $(\overline{w}^k)_{k\in\N}$ are the sequences generated by Algorithm $\ref{alg:1}$, then, for all $k\in\N$ and $x\in C$, we have

\begin{equation}
\label{eq:sta-lema1}
\norm{x^{k+1}-x}^2 \leq \norm{x^k-x}^2 + 2\alpha_k((f+g)(x) - (f+g)(x^k) + \epsilon_k) + r_k^2 + \alpha_k^2\norm{u^k+w^k}^2
\end{equation}
for any $w^k\in\partial g(x^k)$.
\end{lemma}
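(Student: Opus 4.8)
The plan is to expand the squared distance $\|x^{k+1}-x\|^2$ using the nonexpansiveness of the projection, and then control the resulting inner product terms by invoking the two $\epsilon$-subgradient inequalities (for $f$ at $x^k$ and for $g$ at $x^k$) together with the absolute error bound from step 2 of the algorithm. Since $x\in C$ and $x^{k+1}=P_C(y^k-\alpha_k\overline{w}^k)$, I would first use the firm nonexpansiveness (or simply nonexpansiveness) of $P_C$ to write
\begin{equation*}
\|x^{k+1}-x\|^2 \leq \|y^k-\alpha_k\overline{w}^k - x\|^2.
\end{equation*}
Recalling that $y^k = x^k-\alpha_k u^k$, the right-hand side equals $\|x^k - x - \alpha_k(u^k+\overline{w}^k)\|^2$, which expands to
\begin{equation*}
\|x^k-x\|^2 - 2\alpha_k\inpr{u^k+\overline{w}^k}{x^k-x} + \alpha_k^2\|u^k+\overline{w}^k\|^2.
\end{equation*}

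The next step is to bound the cross term using the subgradient inequalities. Since $u^k\in\partial_{\epsilon_k}f(x^k)$, the definition of the $\epsilon$-subdifferential gives $\inpr{u^k}{x^k-x}\geq f(x^k)-f(x)-\epsilon_k$; since $\overline{w}^k\in\partial g(\overline{x}^k)$ I would instead pass through Proposition \ref{prop:sub-properties}(ii), which (as noted in the remarks) yields $\overline{w}^k\in\partial_{\overline{\epsilon}_k}g(x^k)$ with $\overline{\epsilon}_k = g(x^k)-g(\overline{x}^k)-\inpr{\overline{w}^k}{x^k-\overline{x}^k}$, and hence $\inpr{\overline{w}^k}{x^k-x}\geq g(x^k)-g(x)-\overline{\epsilon}_k$. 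Combining these produces a lower bound of $(f+g)(x^k)-(f+g)(x)-\epsilon_k-\overline{\epsilon}_k$ for $\inpr{u^k+\overline{w}^k}{x^k-x}$, which after substitution contributes the term $2\alpha_k((f+g)(x)-(f+g)(x^k)+\epsilon_k+\overline{\epsilon}_k)$.

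The remaining obstacle, which I expect to be the crux, is reconciling the two discrepancies between what the expansion naturally yields and what the statement \eqref{eq:sta-lema1} claims: first, the quadratic term involves $\overline{w}^k$ (the inexact subgradient) whereas the target has $w^k\in\partial g(x^k)$; second, the target contains neither $\overline{\epsilon}_k$ nor an explicit $\overline{w}^k$, but instead an $r_k^2$ term. The resolution is to reintroduce an exact subgradient $w^k\in\partial g(x^k)$ by writing $\overline{w}^k = (u^k+\overline{w}^k) - u^k$ and comparing with $u^k+w^k$; more precisely, I would rewrite the affine step in terms of $u^k+w^k$ by adding and subtracting $\alpha_k w^k$, so that $\|y^k-\alpha_k\overline{w}^k-x\|^2 = \|x^k-x-\alpha_k(u^k+w^k) + \alpha_k(w^k-\overline{w}^k)\|^2$. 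The term $\alpha_k(w^k-\overline{w}^k)$ is where the error bound $\|\alpha_k\overline{w}^k+\overline{x}^k-y^k\|\leq r_k$ must be deployed to absorb the perturbation into $r_k^2$, and where the nonnegativity $\overline{\epsilon}_k\geq 0$ (or its cancellation against the cross-term contribution) lets me drop $\overline{\epsilon}_k$ from the bound. I anticipate that carefully tracking these perturbation and error contributions—ensuring all slack is nonnegative so the inequality goes the right way—is the delicate bookkeeping that makes the lemma work; the algebra itself is routine expansion of squared norms.
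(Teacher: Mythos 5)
Your proposal is correct and follows essentially the same route as the paper's proof: nonexpansiveness of $P_C$, the $\epsilon$-subgradient inequality for $u^k$, the transfer of $\overline{w}^k$ to an $\overline{\epsilon}_k$-subgradient of $g$ at $x^k$ (the paper equivalently applies the exact subgradient inequality at $\overline{x}^k$ and then adds and subtracts $g(\overline{x}^k)$ using $w^k\in\partial g(x^k)$), and a final completing-the-square in which the residual $\alpha_k\overline{w}^k+\overline{x}^k-y^k$ is absorbed into $r_k^2$. The bookkeeping you defer does close: writing $e^k:=\alpha_k\overline{w}^k+\overline{x}^k-y^k$ one has $x^k-\overline{x}^k=\alpha_k(u^k+\overline{w}^k)-e^k$, and the excess $2\alpha_k\overline{\epsilon}_k+\alpha_k^2\norm{u^k+\overline{w}^k}^2-\alpha_k^2\norm{u^k+w^k}^2$ collapses to $\norm{e^k}^2-\norm{\alpha_k(w^k-\overline{w}^k)+e^k}^2\leq r_k^2$, which is exactly the paper's final negative square.
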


\begin{proof}
Since $x^{k+1}=P_C(y^k-\alpha_k\overline{w}^k)$, it follows from the non-expansion property of the projection operator $P_C(\cdot)$ that, for any $x\in C$
\begin{equation}
\label{eq:eq0}
\norm{x^{k+1}-x} \leq \norm{y^k-\alpha_k\overline{w}^k-x}.
\end{equation}
Now, we note that
\begin{equation}
\label{eq:11}
\begin{split}
\norm{y^k - \alpha_k\overline{w}^k - x}^2 = &  \norm{y^k - x}^2 - 2\alpha_k\inpr{\overline{w}^k}{y^k-x} + \alpha_k^2\norm{\overline{w}^k}^2\\
= & \norm{y^k-x}^2 - 2\alpha_k\inpr{\overline{w}^k}{\overline{x}^k-x} - 2\alpha_k\inpr{\overline{w}^k}{y^k-\overline{x}^k} + \alpha_k^2\norm{\overline{w}^k}^2\\
 = & \norm{y^k-x}^2 - 2\alpha_k\inpr{\overline{w}^k}{\overline{x}^k-x} + \norm{\alpha_k\overline{w}^k + \overline{x}^k-y^k}^2
- \norm{y^k-\overline{x}^k}^2,
\end{split}
\end{equation}
where the last equality above follows from a simple manipulation.
Moreover, using the definition of $y^k$ in step 1 of Algorithm \ref{alg:1} we have
\begin{equation*}
\norm{y^k-x}^2 = \norm{x^k-x}^2 -2\alpha_k\inpr{u^k}{x^k-x} + \alpha_k^2\norm{u^k}^2.
\end{equation*}
Hence, combining equation above with \eqref{eq:11} and rearranging the terms, we obtain
\begin{equation}
\label{eq:1}
\begin{split}
\norm{y^k - \alpha_k\overline{w}^k - x}^2 = & \norm{x^k-x}^2 - 2\alpha_k\inpr{u^k}{x^k-x} - 2\alpha_k\inpr{\overline{w}^k}{\overline{x}^k-x}  \\
& + \norm{\alpha_k\overline{w}^k + \overline{x}^k-y^k}^2
- \norm{y^k-\overline{x}^k}^2 + \alpha_k^2\norm{u^k}^2.
\end{split}
\end{equation}

Next, we observe that assumptions $u^k\in\partial_{\epsilon_k} f(x^k)$ and   $\overline{w}^k\in\partial g(\overline{x}^k)$ imply, respectively, that
\begin{equation}
\label{eq:sub-fg}
\inpr{u^k}{x-x^k} \leq f(x) - f(x^k) + \epsilon_k\qquad\text{ and }\qquad \inpr{\overline{w}^k}{x-\overline{x}^k} \leq g(x) - g(\overline{x}^k).
\end{equation}
Thus, equation \eqref{eq:eq0}, together with \eqref{eq:1} and \eqref{eq:sub-fg}, yields
\begin{equation*}
\begin{split}
\norm{x^{k+1}-x}^2 \leq & \norm{x^k-x}^2 + 2\alpha_k(f(x) - f(x^k) + \epsilon_k) + 2\alpha_k (g(x) - g(\overline{x}^k))\\
& + \norm{\alpha_k\overline{w}^k + \overline{x}^k-y^k}^2 - \norm{y^k-\overline{x}^k}^2 + \norm{\alpha_ku^k}^2.\\
\end{split}
\end{equation*}
We now combine the equation above with the error criterion in step 2 of Algorithm \ref{alg:1} and the relation
\begin{equation}
\label{eq:3}
\begin{split}
\norm{\alpha_ku^k}^2-\norm{y^k-\overline{x}^k}^2 = \norm{\alpha_ku^k}^2 - \norm{x^k-\alpha_ku^k-\overline{x}^k}^2
= 2\alpha_k\inpr{u^k}{x^k-\overline{x}^k} - \norm{x^k-\overline{x}^k}^2,
\end{split}
\end{equation}
to deduce that 
\begin{equation*}
\begin{split}
\norm{x^{k+1}-x}^2 \leq & \norm{x^k-x}^2 + 2\alpha_k(f(x) - f(x^k) + \epsilon_k + g(x) - g(\overline{x}^k)) + r_k^2\\
& + 2\alpha_k\inpr{u^k}{x^k-\overline{x}^k} - \norm{x^k-\overline{x}^k}^2.
\end{split}
\end{equation*}

Adding and subtracting $g(\overline{x}^k)$ on the right-hand side term of the equation above, and taking $w^k\in \partial g(x^k)$, which exists due to the fact that $x^k\in C$ and hypothesis A.2, we obtain
\begin{equation*}
\begin{split}
\norm{x^{k+1}-x}^2 \leq & \norm{x^k-x}^2 + 2\alpha_k((f+g)(x) - (f+g)(x^k) + \epsilon_k) + 2\alpha_k(g(x^k)-g(\overline{x}^k)) \\
& + r_k^2 + 2\alpha_k\inpr{u^k}{x^k-\overline{x}^k} - \norm{x^k-\overline{x}^k}^2\\
\leq & \norm{x^k-x}^2 + 2\alpha_k((f+g)(x) - (f+g)(x^k) + \epsilon_k) + 2\alpha_k\inpr{w^k}{x^k-\overline{x}^k}\\
& + r_k^2 + 2\alpha_k\inpr{u^k}{x^k-\overline{x}^k} - \norm{x^k-\overline{x}^k}^2.
\end{split}
\end{equation*}

Finally, by straightforward calculations in the last inequality above we deduce that
\begin{equation*}
\begin{split}
\norm{x^{k+1}-x}^2 \leq &
\norm{x^k-x}^2 + 2\alpha_k((f+g)(x) - (f+g)(x^k)+\epsilon_k) + r_k^2\\
& + \norm{\alpha_k(w^k+u^k)}^2 - \norm{x^k-\overline{x}^k-\alpha_k(w^k+u^k)}^2,
\end{split}
\end{equation*}
which implies equation \eqref{eq:sta-lema1}.
\end{proof}

It is to be expected that Algorithm \ref{alg:1} is not a descent method since it recovers the classical subgradient method, and it is well known that an iteration of this method can increase the objective value of the function. Therefore, as in subgradient methods, we will keep track of the best point found so far, i.e. the one with the smallest function value.
For this purpose, we define recursively the sequence of functional values $((f+g)^k_\text{best})_{k\in\N}$ as
\begin{equation}
\label{eq:best-func}
(f+g)^k_\text{best}: =
\left\lbrace
\begin{array}{ll}
(f+g)(x^0),&\quad\text{if }\, k = 0,\\
\min\{(f+g)^{k-1}_\text{best}\,,\,(f+g)(x^k)\},&\quad\text{for all }\, k\geq1.
\end{array}\right.
\end{equation}
We observe that $((f+g)^k_\text{best})_{k\in\N}$ is a decreasing sequence, therefore it has a limit, which can be $-\infty$.

The following result is a consequence of Lemma \ref{lem:2} and it presents a general convergence rate for the sequence of the best objective values \eqref{eq:best-func}.

\begin{lemma}
\label{lem:conv-rate-func-alg1}
Let $(x^k)_{k\in\N}$ be the sequence generated by Algorithm $\ref{alg:1}$ and $((f+g)^k_\text{best})_{k\in\N}$ be the sequence defined in \eqref{eq:best-func}. If $S^\star\neq\emptyset$, then, for all integer $k\geq0$, it holds that
\begin{equation}
\label{eq:conv-rate-alg1}
(f+g)^k_\text{best} - s^\star \leq \dfrac{d_0^2 + 2\sum_{j=0}^k\alpha_j\epsilon_j + \sum_{j=0}^kr_j^2 + c_k\sum_{j=0}^k\alpha_j^2}{2\sum_{j=0}^k\alpha_j},
\end{equation}
where $d_0$ is the distance of $x^0$ to $S^\star$, $c_k:=\max\limits_{j=0,\dots,k}\left\lbrace\norm{u^j+w^j}^2\right\rbrace$ and $w^j\in\partial g(x^j)$.
\end{lemma}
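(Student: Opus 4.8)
The plan is to turn the one-step estimate of Lemma~\ref{lem:2} into a cumulative bound by telescoping and then to extract the best functional value. First I would fix a minimizer and use it as the test point $x$ in \eqref{eq:sta-lema1}. Since $S^\star\neq\emptyset$ and $S^\star\subseteq C$, I would pick $x^\star\in S^\star$ so that $(f+g)(x^\star)=s^\star$; to make the distance term sharp I would take $x^\star$ to be the point of $S^\star$ closest to $x^0$, so that $\norm{x^0-x^\star}=d_0$. Because $S^\star$ is the set of minimizers of a proper closed convex function over the closed convex set $C$, it is closed and convex and hence this nearest point exists; alternatively one can choose, for each $\delta>0$, a point $x^\star\in S^\star$ with $\norm{x^0-x^\star}\le d_0+\delta$ and let $\delta\to0$ at the end.

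With this choice, applying Lemma~\ref{lem:2} at $x=x^\star$ and rearranging \eqref{eq:sta-lema1} isolates the functional gap at step $j$:
\begin{equation*}
2\alpha_j\big((f+g)(x^j)-s^\star\big)\leq \norm{x^j-x^\star}^2-\norm{x^{j+1}-x^\star}^2+2\alpha_j\epsilon_j+r_j^2+\alpha_j^2\norm{u^j+w^j}^2,
\end{equation*}
valid for the fixed selections $w^j\in\partial g(x^j)$ used in the definition of $c_k$. Summing this over $j=0,\dots,k$, the norm differences telescope to $\norm{x^0-x^\star}^2-\norm{x^{k+1}-x^\star}^2\le d_0^2$, while $\sum_{j=0}^k\alpha_j^2\norm{u^j+w^j}^2\le c_k\sum_{j=0}^k\alpha_j^2$ by the very definition of $c_k$. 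This yields
\begin{equation*}
\sum_{j=0}^k 2\alpha_j\big((f+g)(x^j)-s^\star\big)\leq d_0^2+2\sum_{j=0}^k\alpha_j\epsilon_j+\sum_{j=0}^k r_j^2+c_k\sum_{j=0}^k\alpha_j^2.
\end{equation*}

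To finish, I would bound the left-hand side from below using the best value. Since $(f+g)^k_\text{best}=\min_{0\le j\le k}(f+g)(x^j)$, we have $(f+g)(x^j)-s^\star\ge (f+g)^k_\text{best}-s^\star$ for every $j\le k$, so the left-hand side is at least $2\big(\sum_{j=0}^k\alpha_j\big)\big((f+g)^k_\text{best}-s^\star\big)$. Dividing by the positive quantity $2\sum_{j=0}^k\alpha_j$ then gives \eqref{eq:conv-rate-alg1}.

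This argument is essentially routine once Lemma~\ref{lem:2} is in hand; the only point requiring mild care is the legitimacy of choosing a test point realizing the distance $d_0$, which I would justify either by the closedness of $S^\star$ or by the $\delta$-approximation argument above, together with the bookkeeping of keeping a single, consistent selection $w^j\in\partial g(x^j)$ across both the telescoped inequality and the definition of $c_k$.
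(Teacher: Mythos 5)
Your proposal is correct and follows essentially the same route as the paper: take $x^\star=P_{S^\star}(x^0)$ in Lemma~\ref{lem:2}, telescope the resulting one-step inequality, bound $\norm{u^j+w^j}^2$ by $c_k$, and lower-bound the sum via $(f+g)^k_\text{best}$. The only difference is that you spell out the existence of the nearest point and the final division step, which the paper leaves as "a simple manipulation."
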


\begin{proof}
We define $x^\star := P_{S^\star}(x^0)$ and take $x=x^\star$ in Lemma \ref{lem:2} to obtain
\begin{equation*}
\begin{split}
\norm{x^{k+1}-x^\star}^2 \leq &  \norm{x^k-x^\star}^2 - 2\alpha_k((f+g)(x^k)-s^\star) + 2\alpha_k\epsilon_k + r_k^2 + \alpha_k^2\norm{u^k+w^k}^2.
\end{split}
\end{equation*}
Now, applying recursively the equation above we have
\begin{equation*}
\begin{split}
\norm{x^{k+1}-x^\star}^2 \leq & \norm{x^0-x^\star}^2 - 2\sum_{j=0}^k\alpha_j((f+g)(x^j)-s^\star) + 2\sum_{j=0}^k\alpha_j\epsilon_j\\
&+ \sum_{j=0}^kr_j^2 + \sum_{j=0}^k\alpha_j^2\norm{u^j+w^j}^2.
\end{split}
\end{equation*}
Since $d_0:=\norm{x^0-x^\star}$, inequality \eqref{eq:conv-rate-alg1} follows from a simple manipulation of the equation above and the definitions of $(f+g)^k_\text{best}$ and $c_k$.
\end{proof}

\section{An inexact P$\epsilon$-SM with relative error}\label{relat}

In this section, we derive an inexact algorithm for solving \eqref{eq:problem} with theoretical bases similar to Algorithm \ref{alg:1}, but with a relative error criterion. Specifically, Algorithm \ref{alg:2} below differs from the inexact P$\epsilon$-SM proposed in the previous section in the way that the approximation of the proximal operator is calculated.
In the following algorithm, it is used the notion of approximate solution of a resolvent operator presented in Definition \ref{def:approx-sol}.
Also, the $\epsilon$-subdifferential is present in both functions $f$ and $g$, in contrast with Algorithm \ref{alg:1} which only considers $\epsilon$-subgradients of $f$.

\vspace{2mm}

\fbox{\parbox{14.3cm}{
\begin{algorithm}
\label{alg:2}
Pick $\sigma\in\left[0,1\right[$, an initial point $x^0\in C$  and sequences $(\alpha_{k})_{k\in \mathds{N}}\subset \mathds{R}_{++}$ and $(\epsilon_{k})_{k\in \mathds{N}}\subset \mathds{R}_{+}$. For $k=0,1,\dots$
\begin{enumerate}
\item Take $u^k\in\partial_{\epsilon_k} f(x^k)$ and set
\begin{equation*}
y^k = x^k - \alpha_k u^k.
\end{equation*}
\item Compute a triplet $(\overline{x}^k,\overline{w}^k,\overline{\epsilon}_k)\in\HH\times\HH\times\R_+$   such that $\overline{w}^k\in\partial _{\overline{\epsilon}_k}g(\overline{x}^k)$ and
\begin{equation*}
\norm{\alpha_k\overline{w}^k + \overline{x}^k - y^k}^2 + 2\alpha_k\overline{\epsilon}_k \leq \sigma^2\norm{\overline{x}^k-y^k}^2.
\end{equation*}
\item Set
\begin{equation*}
x^{k+1} = P_C(y^k - \alpha_k\overline{w}^k).
\end{equation*}
\end{enumerate}
\end{algorithm}}}

\vspace{2mm}

We observe that the methods of \cite{BelloCruz2016,Bert2015,Bert2011} are also particular cases of Algorithm 
\ref{alg:2}.
Indeed, by choosing $C=\dom{g}$, $\sigma=0$ and $\epsilon_k=0$ for all $k\in\N$, it can be seen that steps 1-3 above 
are precisely the iterations of the proximal subgradient methods in \cite{BelloCruz2016} and \cite{Bert2015}
(when $m=1$).

We also note that Algorithm \ref{alg:2} does not specify how to find $\overline{x}^k$, $\overline{w}^k$ and $\overline{\epsilon}_k$ as in step 2. The algorithm used to perform this computation  will depend on the 
particular implementation of the method and the properties of the operator $\partial g$.

Next result will be fundamental for developing the convergence analysis of Algorithm \ref{alg:2}.
It is of the same nature as Lemma \ref{lem:2} and may be proven in much the same way.

\begin{lemma}
\label{lem:1}
Let $(x^k)_{k\in\N}$, $(u^k)_{k\in\N}$ and $(\overline{w}^k)_{k\in\N}$ be the sequences generated by Algorithm $\ref{alg:2}$ and $x\in C$. Then, for all $k\in\N$, it holds that
\begin{equation}
\label{eq:sta-lema2}
\begin{split}
\norm{x^{k+1}-x}^2 \leq & \norm{x^k-x}^2 + 2\alpha_k((f+g)(x) - (f+g)(x^k) + \epsilon_k)\\
& + \dfrac{\sigma^2}{(1-\sigma)^2}\alpha_k^2\norm{\overline{w}^k}^2 + \alpha_k^2\norm{u^k+w^k}^2,
\end{split}
\end{equation}
for any $w^k\in\partial g(x^k)$.
\end{lemma}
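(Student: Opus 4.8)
The plan is to follow the proof of Lemma \ref{lem:2} essentially line by line, since the only structural changes are that the absolute bound $\|\alpha_k\overline{w}^k+\overline{x}^k-y^k\|\le r_k$ is replaced by the relative criterion of step 2 of Algorithm \ref{alg:2}, and that $\overline{w}^k$ is now only an $\overline{\epsilon}_k$-subgradient of $g$ at $\overline{x}^k$. First I would invoke the non-expansiveness of $P_C$ to get $\|x^{k+1}-x\|\le\|y^k-\alpha_k\overline{w}^k-x\|$, and then reproduce the purely algebraic identities \eqref{eq:11} and \eqref{eq:1}: these use only the definitions $y^k=x^k-\alpha_ku^k$ and $x^{k+1}=P_C(y^k-\alpha_k\overline{w}^k)$ and are therefore insensitive to which error criterion was used to produce $(\overline{x}^k,\overline{w}^k)$.

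Next I would insert the two subgradient inequalities. The one for $f$ is unchanged, $\langle u^k,x-x^k\rangle\le f(x)-f(x^k)+\epsilon_k$, while the one for $g$ now carries the extra slack $\overline{\epsilon}_k$, namely $\langle\overline{w}^k,x-\overline{x}^k\rangle\le g(x)-g(\overline{x}^k)+\overline{\epsilon}_k$, because $\overline{w}^k\in\partial_{\overline{\epsilon}_k}g(\overline{x}^k)$. The decisive point is that the term $2\alpha_k\overline{\epsilon}_k$ produced here pairs up with $\|\alpha_k\overline{w}^k+\overline{x}^k-y^k\|^2$, and the relative criterion \eqref{eq:approx-sol} bounds the sum of these two quantities by $\sigma^2\|y^k-\overline{x}^k\|^2$. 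After this substitution the estimate reads
\[
\|x^{k+1}-x\|^2\le\|x^k-x\|^2+2\alpha_k\big(f(x)-f(x^k)+\epsilon_k+g(x)-g(\overline{x}^k)\big)-(1-\sigma^2)\|y^k-\overline{x}^k\|^2+\alpha_k^2\|u^k\|^2.
\]
I would then convert $g(\overline{x}^k)$ into $g(x^k)$ exactly as in Lemma \ref{lem:2}: add and subtract $g(x^k)$ and use $g(x^k)-g(\overline{x}^k)\le\langle w^k,x^k-\overline{x}^k\rangle$ for some $w^k\in\partial g(x^k)$ (available by A.2). This isolates the objective-gap term $2\alpha_k((f+g)(x)-(f+g)(x^k)+\epsilon_k)$ and leaves me to bound the residual
\[
T:=2\alpha_k\langle w^k,x^k-\overline{x}^k\rangle+\alpha_k^2\|u^k\|^2-(1-\sigma^2)\|y^k-\overline{x}^k\|^2.
\]

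To control $T$ I would split $-(1-\sigma^2)\|y^k-\overline{x}^k\|^2=-\|y^k-\overline{x}^k\|^2+\sigma^2\|y^k-\overline{x}^k\|^2$ and apply the same identity \eqref{eq:3}, namely $\alpha_k^2\|u^k\|^2-\|y^k-\overline{x}^k\|^2=2\alpha_k\langle u^k,x^k-\overline{x}^k\rangle-\|x^k-\overline{x}^k\|^2$. Completing the square then gives $2\alpha_k\langle u^k+w^k,x^k-\overline{x}^k\rangle-\|x^k-\overline{x}^k\|^2\le\alpha_k^2\|u^k+w^k\|^2$, so that $T\le\alpha_k^2\|u^k+w^k\|^2+\sigma^2\|y^k-\overline{x}^k\|^2$.

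The last and genuinely new step — the one with no analogue in Lemma \ref{lem:2}, where $r_k^2$ simply appears directly — is to absorb the leftover $\sigma^2\|y^k-\overline{x}^k\|^2$ into the target coefficient $\frac{\sigma^2}{(1-\sigma)^2}\alpha_k^2\|\overline{w}^k\|^2$. Here I would extract from the criterion the pointwise bound $\|\alpha_k\overline{w}^k+\overline{x}^k-y^k\|\le\sigma\|\overline{x}^k-y^k\|$ (discarding $2\alpha_k\overline{\epsilon}_k\ge0$) and then use the triangle inequality $\|\overline{x}^k-y^k\|\le\|\alpha_k\overline{w}^k+\overline{x}^k-y^k\|+\alpha_k\|\overline{w}^k\|$ to deduce $(1-\sigma)\|\overline{x}^k-y^k\|\le\alpha_k\|\overline{w}^k\|$, i.e. $\|y^k-\overline{x}^k\|\le\frac{\alpha_k}{1-\sigma}\|\overline{w}^k\|$. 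Squaring and multiplying by $\sigma^2$ yields exactly $\frac{\sigma^2}{(1-\sigma)^2}\alpha_k^2\|\overline{w}^k\|^2$, and combining with the bound on $T$ gives \eqref{eq:sta-lema2}. I expect this triangle-inequality estimate, which is precisely where the factor $(1-\sigma)^{-2}$ enters, to be the crux of the argument; everything else is a transcription of the computation already performed for Lemma \ref{lem:2}.
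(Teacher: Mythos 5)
Your proposal is correct and follows essentially the same route as the paper's proof: reuse of the algebraic identities from Lemma \ref{lem:2}, the $\overline{\epsilon}_k$-subgradient inequality for $g$, absorption of $\norm{\alpha_k\overline{w}^k+\overline{x}^k-y^k}^2+2\alpha_k\overline{\epsilon}_k$ via the relative criterion, completing the square to produce $\alpha_k^2\norm{u^k+w^k}^2$, and finally the bound $\norm{y^k-\overline{x}^k}\le\frac{\alpha_k}{1-\sigma}\norm{\overline{w}^k}$ derived from the criterion by the triangle inequality. The only difference is organizational (you isolate a residual term $T$), and you in fact supply slightly more detail than the paper on the last triangle-inequality step.
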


\begin{proof}
We first observe that the sequences calculated by Algorithm \ref{alg:2} satisfy equations \eqref{eq:eq0} and \eqref{eq:1} of Lemma \ref{lem:2}. Therefore, for any $x\in C$ and $k\in\N$, we have
\begin{equation}
\label{eq:4}
\begin{split}
\norm{x^{k+1} - x}^2 \leq & \norm{x^k-x}^2 - 2\alpha_k\inpr{u^k}{x^k-x} - 2\alpha_k\inpr{\overline{w}^k}{\overline{x}^k-x}\\
&  + \norm{\alpha_k\overline{w}^k + \overline{x}^k-y^k}^2
- \norm{y^k-\overline{x}^k}^2 + \norm{\alpha_ku^k}^2.
\end{split}
\end{equation}

From hypotheses $u^k\in\partial_{\epsilon_k} f(x^k)$ and $\overline{w}^k\in\partial_{\overline{\epsilon}_k}g(\overline{x}^k)$, it follows that
\begin{equation}
\label{eq:eps-sub-f-g}
\inpr{u^k}{x-x^k} \leq f(x) - f(x^k) + \epsilon_k, \quad\qquad \inpr{\overline{w}^k}{x-\overline{x}^k} \leq g(x) - g(\overline{x}^k) + \overline{\epsilon}_k.
\end{equation}
Hence, combining \eqref{eq:eps-sub-f-g} with  \eqref{eq:4} we have
\begin{equation*}
\begin{split}
\norm{x^{k+1}-x}^2 \leq & \norm{x^k-x}^2 + 2\alpha_k(f(x) - f(x^k) + \epsilon_k) + 2\alpha_k(g(x) - g(\overline{x}^k) + \overline{\epsilon}_k)\\
& +\norm{\alpha_k\overline{w}^k + \overline{x}^k-y^k}^2 - \norm{y^k-\overline{x}^k}^2 + \norm{\alpha_ku^k}^2.
\end{split}
\end{equation*}

Now, we use the error criterion of step 2 in Algorithm \ref{alg:2} to obtain
\begin{equation*}
\begin{split}
\norm{x^{k+1}-x}^2 \leq & \norm{x^k-x}^2 + 2\alpha_k(f(x) - f(x^k) + \epsilon_k) + 2\alpha_k(g(x) - g(\overline{x}^k)) + \sigma^2\norm{y^k-\overline{x}^k}\\
& - \norm{y^k-\overline{x}^k}^2 + \norm{\alpha_ku^k}^2.
\end{split}
\end{equation*}
Substituting \eqref{eq:3} into the equation above, and adding and subtracting $g(\overline{x}^k)$, we have
\begin{equation*}
\begin{split}
\norm{x^{k+1}-x}^2 \leq & \norm{x^k-x}^2 + 2\alpha_k((f+g)(x) - (f+g)(x^k) + \epsilon_k) + 2\alpha_k(g(x^k)-g(\overline{x}^k))\\
& + \sigma^2\norm{y^k-\overline{x}^k}^2 +2\alpha_k\inpr{u^k}{x^k-\overline{x}^k} - \norm{x^k-\overline{x}^k}^2.
\end{split}
\end{equation*}
Next, we take a subgradient $w^k\in\partial g(x^k)$, which exists since $x^k\in C\subseteq \text{int}(\dom{g})$, to obtain
\begin{equation*}
\begin{split}
\norm{x^{k+1}-x}^2 \leq & \norm{x^k-x}^2 + 2\alpha_k((f+g)(x) - (f+g)(x^k)+\epsilon_k) + 2\alpha_k\inpr{w^k}{x^k-\overline{x}^k}\\
& + \sigma^2\norm{y^k-\overline{x}^k}^2 + 2\alpha_k\inpr{u^k}{x^k-\overline{x}^k} - \norm{x^k-\overline{x}^k}^2.
\end{split}
\end{equation*}
Inequality above clearly implies
\begin{equation*}
\begin{split}
\norm{x^{k+1}-x}^2
\leq & \norm{x^k-x}^2 + 2\alpha_k((f+g)(x) - (f+g)(x^k)+\epsilon_k) + \sigma^2\norm{y^k-\overline{x}^k}^2\\
& + \alpha_k^2\norm{u^k+w^k}^2 - \norm{x^k-\overline{x}^k-\alpha_k(w^k+u^k)}^2.
\end{split}
\end{equation*}
Finally, we observe that the error criterion in step 2 of Algorithm \ref{alg:2} yields
\begin{equation*}
\norm{y^k-\overline{x}^k} \leq \dfrac{1}{1-\sigma}\norm{\alpha_k\overline{w}^k},
\end{equation*}
and combining the two relations above we obtain \eqref{eq:sta-lema2}.
\end{proof}

Associated with the sequence $(x^k)_{k\in\N}$ computed by Algorithm \ref{alg:2}, we can also define the sequence of functional values $((f+g)^k_\text{best})_{k\in\N}$ by equation \eqref{eq:best-func}, for keeping track of the current best functional value. 
Furthermore, we can obtain convergence rate results for Algorithm \ref{alg:2} similar to those established in Lemma \ref{lem:conv-rate-func-alg1}.

\begin{lemma}
\label{lem:conv-rate-func}
Let $(x^k)_{k\in\N}$ be the sequence generated by Algorithm $\ref{alg:2}$ and $((f+g)^k_\text{best})_{k\in\N}$ be its associated sequence of best functional values defined as in \eqref{eq:best-func}. If $S^\star\neq\emptyset$, then for all integer $k\geq0$, it holds that
\begin{equation}
\label{eq:conv-rate}
(f+g)^k_\text{best} - s^\star \leq \dfrac{d_0^2 + 2\sum_{j=0}^k\alpha_j\epsilon_j + \tilde{c}_k\sum_{j=0}^k\alpha_j^2}{2\sum_{j=0}^k\alpha_j},
\end{equation}
where $d_0$ is the distance of $x^0$ to $S^\star$, $\tilde{c}_k:=\max\limits_{j=0,\dots,k}\left\lbrace\dfrac{\sigma^2}{(1-\sigma)^2}\norm{\overline{w}^j}^2+\norm{u^j+w^j}^2\right\rbrace$ and $w^j\in\partial g(x^j)$.
\end{lemma}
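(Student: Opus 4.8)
The plan is to mirror closely the proof of Lemma~\ref{lem:conv-rate-func-alg1}, replacing the per-iteration estimate of Lemma~\ref{lem:2} by its relative-error counterpart Lemma~\ref{lem:1}. First I would set $x^\star := P_{S^\star}(x^0)$, so that $(f+g)(x^\star)=s^\star$, and instantiate inequality \eqref{eq:sta-lema2} at $x=x^\star$. After rearranging, this yields, for each $k$, a recursive estimate of the form $\norm{x^{k+1}-x^\star}^2 \leq \norm{x^k-x^\star}^2 - 2\alpha_k((f+g)(x^k)-s^\star) + 2\alpha_k\epsilon_k + \big(\tfrac{\sigma^2}{(1-\sigma)^2}\norm{\overline{w}^k}^2 + \norm{u^k+w^k}^2\big)\alpha_k^2$.

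Next, I would iterate this inequality from $j=0$ to $j=k$, telescoping the squared-distance terms, and use $\norm{x^{k+1}-x^\star}^2\geq 0$ together with $d_0 = \norm{x^0-x^\star}$ to obtain $2\sum_{j=0}^k\alpha_j((f+g)(x^j)-s^\star) \leq d_0^2 + 2\sum_{j=0}^k\alpha_j\epsilon_j + \sum_{j=0}^k\big(\tfrac{\sigma^2}{(1-\sigma)^2}\norm{\overline{w}^j}^2 + \norm{u^j+w^j}^2\big)\alpha_j^2$. The essential bookkeeping step is that the two relative-error contributions in Lemma~\ref{lem:1} both carry the common factor $\alpha_j^2$, so that summing and factoring out the uniform bound $\tilde{c}_k$ gives $\sum_{j=0}^k\big(\tfrac{\sigma^2}{(1-\sigma)^2}\norm{\overline{w}^j}^2 + \norm{u^j+w^j}^2\big)\alpha_j^2 \leq \tilde{c}_k\sum_{j=0}^k\alpha_j^2$, by the very definition of $\tilde{c}_k$.

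Finally, on the left-hand side I would lower-bound each functional gap using $(f+g)^k_\text{best}\leq (f+g)(x^j)$ for every $j\leq k$, which converts the weighted sum into $2\big(\sum_{j=0}^k\alpha_j\big)\big((f+g)^k_\text{best}-s^\star\big)$. Dividing through by the positive quantity $2\sum_{j=0}^k\alpha_j$ then produces \eqref{eq:conv-rate}. I do not anticipate any genuine obstacle: since Lemma~\ref{lem:1} already performs the analytical work, this argument is a routine recursion-and-telescoping computation, identical in structure to that of Lemma~\ref{lem:conv-rate-func-alg1}. The only point requiring a little care is the grouping of the two $\alpha_j^2$-weighted error terms under the single maximum $\tilde{c}_k$, which is what allows the bound to be stated as compactly as in \eqref{eq:conv-rate}.
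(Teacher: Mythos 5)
Your proposal is correct and follows essentially the same route as the paper: instantiate Lemma~\ref{lem:1} at $x=x^\star=P_{S^\star}(x^0)$, telescope the recursion, bound the $\alpha_j^2$-weighted error terms by $\tilde{c}_k$, and convert the weighted sum of functional gaps via $(f+g)^k_\text{best}$. No issues.
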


\begin{proof}
By a similar argument to that used in Lemma \ref{lem:conv-rate-func-alg1}, we have that Lemma \ref{lem:1}, with $x=x^\star:=P_{S^\star}(x^0)$, implies
\begin{equation*}
\begin{split}
\norm{x^{k+1}-x^\star}^2
\leq & \norm{x^0-x^\star}^2 - 2\sum_{j=0}^k\alpha_j((f+g)(x^j)-s^\star) + 2\sum_{j=0}^k\alpha_j\epsilon_j\\
& + \sum_{j=0}^k\alpha_j^2\left(\dfrac{\sigma^2}{(1-\sigma)^2}\norm{\overline{w}^j}^2+\norm{u^j+w^j}^2\right).
\end{split}
\end{equation*}
Manipulating the above equation and using the definitions of $(f+g)^k_\text{best}$ and $\tilde{c}_k$, we deduce inequality \eqref{eq:conv-rate}.
\end{proof}

\section{Convergence analysis}\label{converg}
\label{sec:conv-ana}

This section is devoted to the convergence analysis of Algorithms \ref{alg:1} and \ref{alg:2}. 
The convergence results obtained below will depend on the stepsize rule that is chosen for the methods. 
The stepsize selection schemes we use here are very similar to the rules usually employed for the subgradient method. 
Specifically, we will consider three different strategies for choosing the stepsize sequence $(\alpha_k)_{k\in\N}$: \textit{(a)} constant stepsize, \textit{(b)} diminishing stepsize, \textit{(c)} a stepsize based on a
subgradient step length choice due to Polyak.
With these rules we will obtain various types of convergence results for Algorithms \ref{alg:1} and \ref{alg:2}. For instance, it will be proven convergence in the objective values and
convergence to a neighborhood  of the optimal set.
The convergence analyses of the two methods share underlying elements and we present them in a unified way.

We remark that, as in subgradient methods, the convergence properties of Algorithms \ref{alg:1} and \ref{alg:2} will hinge on monitoring the distance of the iterates to the optimal set, which can be achieved through Lemmas \ref{lem:2} and \ref{lem:1}. 
Indeed, if for example we assume that there exists an optimal solution $x^\star\in S^\star$, then Lemma \ref{lem:2} with $x=x^\star$ implies that the sequence $(x^k)_{k\in\N}$, calculated by Algorithm \ref{alg:1}, satisfies
\begin{equation*}
\norm{x^{k+1}-x^\star}^2 < \norm{x^k-x^\star}^2 + r_k^2,
\end{equation*}
provided that the stepsize $\alpha_k$ is such that
\begin{equation}
\label{eq:stepsize1}
0 < \alpha_k < 2\dfrac{(f+g)(x^k)-s^\star-\epsilon_k}{\norm{u^k+w^k}^2}.
\end{equation}
Similarly, if $(x^k)_{k\in\N}$ is the sequence generated by Algorithm \ref{alg:2}, then by Lemma \ref{lem:1} we have
\begin{equation*}
\norm{x^{k+1}-x^\star} < \norm{x^k-x^\star}
\end{equation*}
for all stepsizes $\alpha_k$ such that
\begin{equation}
\label{eq:stepsize}
0 < \alpha_k < 2\dfrac{(f+g)(x^k)-s^\star-\epsilon_k}{\dfrac{\sigma^2}{(1-\sigma)^2}\norm{\overline{w}^k}^2+\norm{u^k+w^k}^2}.
\end{equation}

Hence, in the case of choosing $\alpha_k$ such that it belongs to the interval determined by \eqref{eq:stepsize1} (resp. \eqref{eq:stepsize}) we can guarantee that the sequence $(x^k)_{k\in\N}$, calculated by Algorithm \ref{alg:1} (resp. Algorithm \ref{alg:2}), is quasi-Fejér (resp. Fejér) convergent to $S^\star$. Furthermore, if for Algorithm \ref{alg:2} we select $\alpha_k$ such that it satisfies the inequalities in \eqref{eq:stepsize}, then the distance of the current iterate to $x^\star$ is reduced. 

However, we observe that these rules for selecting the stepsize have as a practical disadvantage that they require prior knowledge of the optimal value $s^\star$, which is usually not available.
Moreover, if we want to use rule \eqref{eq:stepsize} for Algorithm \ref{alg:2}, we also need to know, at the beginning of iteration $k$, the  $\overline{\epsilon}_k$-subgradient $\overline{w}^k$ of $g$ at $\overline{x}^k$, which is only calculated in step 2 of the method, and this calculation obviously depends on the stepsize $\alpha_k$. Thus, in the case of Algorithm \ref{alg:2}, we cannot use this scheme to choose $\alpha_k$.

One alternative, assuming that there is a constant $c>0$ such that $\norm{\overline{w}^k}\leq c$ for all $k\in\N$, is to select $\alpha_k$ such that
\begin{equation}
\label{eq:stepsize-1}
0 < \alpha_k < 2\dfrac{(f+g)(x^k)-s^\star - \epsilon_k}{\dfrac{\sigma^2}{(1-\sigma)^2}c^2+ \norm{u^k+w^k}^2}.
\end{equation}
Note that by Lemma \ref{lem:1}, choosing $\alpha_k$ by \eqref{eq:stepsize-1} 
also guarantees that the distance of the current iterate to the optimal solution set is reduced.

In our convergence analysis of Algorithms \ref{alg:1} and \ref{alg:2}, we will assume the following condition:
\begin{enumerate}
\item[(A.3)] there exists $c>0$ such that $\max\{\norm{\overline{w}^k},\norm{w^k},\norm{u^k}\}\leq c$ for every $k\in\N$ and some $w^k\in\partial g(x^k)$.
\end{enumerate}

Assumption A.3 has been used in several recent works. As well as we know, the first time this assumption appears was in \cite{nedic}. Others works that had used it are \cite{BelloDiaz,Bert2011, phdthesis}. Note that when $\HH$ is finite-dimensional and $C\subseteq int(\dom f\cap \dom g)$, assumption A.3 is automatically satisfied. Also, when the functions $f$ and $g$ are polyhedral (i.e., $f$ and $g$ are the pointwise maximum of a finite number of affine functions) the assumption is satisfied.

\subsection{Constant stepsize}
\label{sub:constant-step}

In this subsection, we study the convergence of Algorithms \ref{alg:1} and \ref{alg:2} when the sequence of stepsizes $(\alpha_k)_{k\in\N}$ is fixed at some positive scalar $\alpha$.
For this case, it can be ensured that
$(x^k)_{k\in\N}$ can get to within an $\mathcal{O}(\alpha)$-neighborhood  of the $\epsilon$\emph{-optimal set}, where $\epsilon=\lim_{k\to\infty}\epsilon_k$.
The proof presented below follows the lines of the proof of convergence of the subgradient methods presented in \cite{bertsekas2014convex}.

\begin{theorem}
\label{theo:const-step}
Let $(x^k)_{k\in\N}$ be the sequence generated by any one of the Algorithms $\ref{alg:1}$-$\ref{alg:2}$ and $c>0$ be given in \emph{A.3}.
Assume that $\alpha_k\equiv\alpha$, where $\alpha>0$, and $\lim_{k\to\infty}\epsilon_k=\epsilon$.
Then, the following statements hold.
\begin{enumerate}
\item[(i)] If $s^\star=-\infty$, then\, $\liminf_{k\to\infty}(f+g)(x^k)=s^\star$.
\item[(ii)] If $s^\star>-\infty$ and $(x^k)_{k\in\N}$ is the sequence calculated by Algorithm $\ref{alg:1}$, then
$$\liminf_{k\to\infty}(f+g)(x^k) \leq s^\star  + \epsilon + 2\alpha c^2.$$
\item[(iii)] If $s^\star>-\infty$ and $(x^k)_{k\in\N}$ is the sequence calculated by Algorithm $\ref{alg:2}$, then
$$\liminf_{k\to\infty}(f+g)(x^k)\leq s^\star+\epsilon+\dfrac{\alpha C_\sigma c^2}{2},$$
where\, $C_\sigma=\dfrac{\sigma^2}{(1-\sigma)^2}+4$.
\end{enumerate}
\end{theorem}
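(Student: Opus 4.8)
The plan is to derive all three claims from a single contradiction argument built on the one-step estimates of Lemmas~\ref{lem:2} and~\ref{lem:1}, specialized to the constant stepsize $\alpha_k\equiv\alpha$ and combined with the uniform bound A.3. First I would reduce each lemma to a clean recursion. Fixing the subgradient $w^k\in\partial g(x^k)$ furnished by A.3, we have $\norm{u^k+w^k}\le\norm{u^k}+\norm{w^k}\le 2c$, hence $\alpha^2\norm{u^k+w^k}^2\le 4\alpha^2c^2$. For Algorithm~\ref{alg:1}, Lemma~\ref{lem:2} then gives, for every $x\in C$,
\begin{equation*}
\norm{x^{k+1}-x}^2\le\norm{x^k-x}^2+2\alpha\big((f+g)(x)-(f+g)(x^k)+\epsilon_k\big)+r_k^2+4\alpha^2c^2.
\end{equation*}
For Algorithm~\ref{alg:2}, using in addition $\norm{\overline{w}^k}\le c$, Lemma~\ref{lem:1} yields the same recursion with the term $r_k^2+4\alpha^2c^2$ replaced by $C_\sigma\alpha^2c^2$. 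The key observation is that dividing the \emph{bounded} per-step error by the factor $2\alpha$ attached to the functional gap produces exactly the constants $2\alpha c^2$ and $\alpha C_\sigma c^2/2$ claimed in (ii) and (iii).

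Next I would prove (ii) and (iii) by contradiction, writing $\beta$ for $2\alpha c^2$ or $\alpha C_\sigma c^2/2$ according to the algorithm, and $L:=\liminf_{k\to\infty}(f+g)(x^k)$. Suppose $L>s^\star+\epsilon+\beta$ and pick $\delta>0$ with $L>s^\star+\epsilon+\beta+4\delta$. Since $S^\star$ may be empty, I would avoid any minimizer and instead invoke the definition of the infimum to select $\hat{x}\in C$ with $(f+g)(\hat{x})<s^\star+\delta$; together with $\epsilon_k\to\epsilon$ and the definition of $\liminf$, there is an index $K$ such that $\epsilon_k<\epsilon+\delta$ and $(f+g)(x^k)>L-\delta$ for all $k\ge K$. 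Substituting $x=\hat{x}$ into the recursion, the bracketed functional term is then strictly below $-\beta-\delta$, so the bounded error $2\alpha\beta$ is exactly absorbed and
\begin{equation*}
\norm{x^{k+1}-\hat{x}}^2\le\norm{x^k-\hat{x}}^2+r_k^2-2\alpha\delta\qquad(k\ge K),
\end{equation*}
the $r_k^2$ being absent for Algorithm~\ref{alg:2}. Telescoping from $K$ to $N$ and using square summability of $(r_k)$ bounds the right side by $\norm{x^K-\hat{x}}^2+\sum_{k\ge K}r_k^2-2\alpha\delta(N-K+1)$, which tends to $-\infty$, contradicting nonnegativity of the norm.

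Claim (i) follows by the same mechanism. If $s^\star=-\infty$ while $L>-\infty$ were finite, then since the infimum is $-\infty$ I could choose $\hat{x}\in C$ with $(f+g)(\hat{x})$ as negative as I please — in particular small enough that the identical telescoping drives $\norm{x^k-\hat{x}}^2$ to $-\infty$ — so $\liminf_{k\to\infty}(f+g)(x^k)=-\infty=s^\star$.

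I expect the only genuinely delicate point to be the possible non-attainment of the infimum when $S^\star=\emptyset$: the whole argument must run through near-optimal points $\hat{x}$ supplied by the definition of $s^\star$ rather than through a solution, which is precisely why the bounds in (ii)--(iii) are phrased with $s^\star$. A secondary bookkeeping concern is that the summable term $r_k^2$, present only for Algorithm~\ref{alg:1}, not spoil the telescoping; square summability keeps its partial sums bounded, so the linear term $-2\alpha\delta(N-K+1)$ dominates and the contradiction stands for both methods.
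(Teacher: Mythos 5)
Your proposal is correct and follows essentially the same route as the paper: both reduce Lemmas \ref{lem:2} and \ref{lem:1} via A.3 to a one-step recursion with a bounded error term, argue by contradiction from the definition of $\liminf$ and of $s^\star$ as an infimum (thus using a near-optimal $\hat{x}\in C$ rather than a minimizer), and telescope to contradict nonnegativity of the norm, with square summability of $(r_k)$ handling the extra term for Algorithm \ref{alg:1}. The only cosmetic difference is your bookkeeping with $4\delta$ versus the paper's $2\eta$ and its later requirement $\eta+\epsilon-\epsilon_k\geq\eta/2$; the substance is identical.
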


\begin{proof}
We first prove (i) and (ii) simultaneously.
We assume that $(x^k)_{k\in\N}$ is the sequence generated by Algorithm \ref{alg:1} and denote $l_{\infty}=\lim\inf_{k\to\infty}(f+g)(x^k)$. Now, we suppose by contradiction that there exists $\eta>0$ such that
 \begin{equation*}
 l_{\infty} > s^\star + \epsilon + 2\alpha c^2 + 2\eta.
 \end{equation*}
Since $s^\star=\inf_{x\in C}(f+g)(x)$, from equation above it follows that there is $\hat{x}\in C$ such that
\begin{equation*}
l_{\infty} \geq (f+g)(\hat{x}) + \epsilon + 2\alpha c^2 +  2\eta,
\end{equation*}
 and by the definition of $l_\infty$ we have that there exists $k_0$ large enough satisfying
 \begin{equation*}
 (f+g)(x^k) \geq l_\infty - \eta,\qquad\quad\text{for all }\, k\geq k_0.
 \end{equation*}
Adding the two inequalities above we obtain
 \begin{equation*}
 (f+g)(x^k) - (f+g)(\hat{x}) \geq \epsilon + 2\alpha c^2 + \eta.
 \end{equation*}
Now, we apply Lemma \ref{lem:2} with $x=\hat{x}$ and combine with the equation above to obtain
 \begin{equation}
 \label{eq:lem-1}
 \begin{split}
 \norm{x^{k+1}-\hat{x}}^2 \leq & \norm{x^k-\hat{x}}^2 - 2\alpha(\epsilon + 2\alpha c^2 + \eta - \epsilon_k) + r_k^2 + \alpha^2\norm{u^k+w^k}^2\\
 \leq & \norm{x^k-\hat{x}}^2 - 2\alpha(\epsilon + 2\alpha c^2 + \eta - \epsilon_k) + r_k^2 + \alpha^24c^2\\
 = & \norm{x^k-\hat{x}}^2 - 2\alpha(\eta+\epsilon-\epsilon_k) + r_k^2,
 \end{split}
 \end{equation}
 where the second inequality above follows from assumption A.3.

Since $\epsilon_k\to\epsilon$, we can assume that $k_0$ is large enough such that
 \begin{equation}
 \label{eq:n-1}
 \eta+\epsilon - \epsilon_k \geq \dfrac{\eta}{2}\qquad\qquad\text{for all }\,k\geq k_0.
 \end{equation}
 Therefore, substituting equation above into \eqref{eq:lem-1} we have
 \begin{equation}
 \label{eq:lem-2}
 \begin{split}
 \norm{x^{k+1}-\hat{x}}^2 \leq \norm{x^k-\hat{x}}^2 - \alpha\eta + r_k^2 \leq \dots \leq\norm{x^{k_0}-\hat{x}}^2 - (k+1-k_0)\eta\alpha + \sum_{j=k_0}^kr_j^2
 \end{split}
 \end{equation}
 for all integer $k\geq k_0$. Since $(r_k)_{k\in\N}\in\ell_2(\R)$, it follows that equation above cannot hold for $k$ sufficiently large, hence we arise to a contradiction.

We now assume that $(x^k)_{k\in\N}$ is calculated by Algorithm \ref{alg:2} and prove item (i) and (iii) for this case. Since the proof is very similar to the previous one, we will only indicate the necessary modifications.

We define $l_\infty:=\lim\inf_{k\to\infty}(f+g)(x^k)$ and suppose by contradiction that there exists $\eta>0$ such that
  \begin{equation*}
 l_{\infty} > s^\star + \epsilon + \dfrac{\alpha C_\sigma c^2}{2} + 2\eta.
 \end{equation*}
 Therefore, using the definition of $l_\infty$, Lemma \ref{lem:1} and hypothesis A.3, in place of equation \eqref{eq:lem-1}, we have that there exist $\hat{x}\in C$ and $k_0\in\N$ such that
 \begin{equation*}
 \begin{split}
 \norm{x^{k+1}-\hat{x}}^2 \leq & \norm{x^k-\hat{x}}^2 - 2\alpha\left(\epsilon + \dfrac{\alpha C_\sigma c^2}{2} + \eta - \epsilon_k\right) + \alpha^2c^2C_\sigma\\
 = & \norm{x^k-\hat{x}}^2 - 2\alpha(\eta+\epsilon-\epsilon_k),
 \end{split}
 \end{equation*}
 for all $k\geq k_0$. Since $\epsilon_k\to\epsilon$, as before, we can take $k_0$ large enough such that \eqref{eq:n-1} holds.
 Combining these equations, in analogy with \eqref{eq:lem-2}, we obtain
  \begin{equation*}
 \begin{split}
 \norm{x^{k+1}-\hat{x}}^2 \leq \norm{x^k-\hat{x}}^2 - \alpha\eta\leq\dots\leq\norm{x^{k_0}-\hat{x}}^2 - (k+1-k_0)\eta\alpha,
 \end{split}
 \end{equation*}
for all integer $k\geq k_0$, which cannot hold for $k$ sufficiently large. Thus, we have a contradiction.
\end{proof}

Next result establishes complexity bounds for Algorithms \ref{alg:1} and \ref{alg:2} to reach a given level of optimality up to the threshold tolerance of the preceding theorem. We note that the complexity estimates for Algorithm \ref{alg:1} will depend on the sequence $(r_k)_{k\in\N}$.

\begin{lemma}
\label{lem:compl-const}
Assume that $S^\star\neq\emptyset$ and $\epsilon_k\equiv\epsilon$, where $\epsilon\geq0$. Define $d_0:=\dist{x^0}{S^\star}$ and $r:=\norm{(r_k)_{k\in\N}}_2$. Then, the following statements hold.
\begin{enumerate}
\item[(i)] If $(x^k)_{k\in\N}$ is the sequence generated by Algorithm $\ref{alg:1}$, then for all $\delta>0$ there exists an index $i=\mathcal{O}\left(\dfrac{d_0^2+r^2}{\delta\alpha}\right)$ such that
\begin{equation*}
(f+g)(x^i) \leq s^\star + \epsilon + 2c^2\alpha + \delta.
\end{equation*}

\item[(ii)] If $(x^k)_{k\in\N}$ is the sequence calculated by Algorithm $\ref{alg:2}$, then for all $\delta>0$ there is an index $i=\mathcal{O}\left(\dfrac{d_0^2}{\delta\alpha}\right)$ such that
\begin{equation*}
(f+g)(x^i) \leq s^\star + \epsilon + \dfrac{c^2C_\sigma\alpha}{2} + \delta.
\end{equation*}
\end{enumerate}
\end{lemma}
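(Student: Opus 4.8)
The plan is to derive both complexity bounds directly from the convergence-rate estimates already established, namely Lemma \ref{lem:conv-rate-func-alg1} for Algorithm \ref{alg:1} and Lemma \ref{lem:conv-rate-func} for Algorithm \ref{alg:2}, by specializing them to the constant stepsize $\alpha_k\equiv\alpha$ and the constant $\epsilon_k\equiv\epsilon$. The key observation is that under these choices every sum appearing in \eqref{eq:conv-rate-alg1} and \eqref{eq:conv-rate} collapses into a closed form in $k$: indeed $\sum_{j=0}^k\alpha_j=(k+1)\alpha$, $\sum_{j=0}^k\alpha_j\epsilon_j=(k+1)\alpha\epsilon$ and $\sum_{j=0}^k\alpha_j^2=(k+1)\alpha^2$, so both right-hand sides reduce to a constant plus a term of order $1/(k+1)$.

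The next step is to make the constants $c_k$ and $\tilde c_k$ uniform in $k$ by invoking assumption A.3. Since $\norm{u^j}\leq c$ and $\norm{w^j}\leq c$, the triangle inequality gives $\norm{u^j+w^j}\leq 2c$, whence $c_k\leq 4c^2$ for every $k$; adding the bound $\norm{\overline{w}^j}\leq c$ yields $\tilde c_k\leq \frac{\sigma^2}{(1-\sigma)^2}c^2+4c^2=c^2C_\sigma$. For Algorithm \ref{alg:1} I would also bound the error term by $\sum_{j=0}^k r_j^2\leq r^2$, using that $r=\norm{(r_k)_{k\in\N}}_2$ is the full $\ell_2$ norm. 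Substituting these bounds into \eqref{eq:conv-rate-alg1} and \eqref{eq:conv-rate} and simplifying produces
\[
(f+g)^k_\text{best}-s^\star \leq \dfrac{d_0^2+r^2}{2(k+1)\alpha} + \epsilon + 2c^2\alpha
\]
for Algorithm \ref{alg:1}, and
\[
(f+g)^k_\text{best}-s^\star \leq \dfrac{d_0^2}{2(k+1)\alpha} + \epsilon + \dfrac{c^2C_\sigma\alpha}{2}
\]
for Algorithm \ref{alg:2}.

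Finally, given $\delta>0$, I would force the remaining $\mathcal{O}(1/(k+1))$ term below $\delta$. For Algorithm \ref{alg:1} this requires $(k+1)\geq(d_0^2+r^2)/(2\delta\alpha)$, and for Algorithm \ref{alg:2} it requires $(k+1)\geq d_0^2/(2\delta\alpha)$, which are exactly the announced orders $\mathcal{O}\!\left((d_0^2+r^2)/(\delta\alpha)\right)$ and $\mathcal{O}\!\left(d_0^2/(\delta\alpha)\right)$. To pass from the best value to an actual iterate, I would use the definition \eqref{eq:best-func}: since $(f+g)^k_\text{best}$ is the minimum of $(f+g)(x^j)$ over $j\in\{0,\dots,k\}$, this minimum is attained at some index $i\leq k$, and that $i$ satisfies both the claimed functional inequality and the complexity estimate. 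The only point demanding genuine care is the uniform bounding of $c_k$ and $\tilde c_k$: without A.3 these quantities could grow with $k$ and the $\mathcal{O}$-estimate would break down, so the main (and rather modest) obstacle is precisely controlling the subgradient norms independently of the iteration, which is what A.3 delivers.
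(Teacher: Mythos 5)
Your proposal is correct and follows essentially the same route as the paper: both specialize Lemmas \ref{lem:conv-rate-func-alg1} and \ref{lem:conv-rate-func} to the constant stepsize, bound $c_k\leq 4c^2$ and $\tilde c_k\leq c^2C_\sigma$ via A.3 and $\sum_j r_j^2\leq r^2$, and then read off the index from the definition of $(f+g)^k_{\text{best}}$. The paper's proof is merely terser; your explicit solving for $k+1$ and the remark that the best value is attained at some $i\leq k$ just spell out what the paper leaves implicit.
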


\begin{proof}
\item[(i)] We observe that, since $\alpha_k\equiv\alpha$, Lemma \ref{lem:conv-rate-func-alg1} implies
\begin{equation*}
(f+g)^k_\text{best} - s^\star \leq \dfrac{d_0^2}{2(k+1)\alpha} + \epsilon + \dfrac{r^2}{2(k+1)\alpha} + \dfrac{4c^2\alpha}{2},
\end{equation*}
where it was used in the inequality above that $c_k\leq4c^2$ for all $k\in\N$. The assertion follows as a direct consequence of equation above and the definition of the sequence of the best functional values.

\item[(ii)] Since $\tilde{c}_k\leq c^2C_\sigma$ and $\alpha_k=\alpha$ for all $k\in\N$, Lemma \ref{lem:conv-rate-func} yields
\begin{equation*}
(f+g)^k_\text{best} - s^\star \leq \dfrac{d_0^2}{2(k+1)\alpha} + \epsilon + \dfrac{c^2C_\sigma\alpha}{2}.
\end{equation*}
Thus, the statement follows immediately from the definition of the sequence $((f+g)^k_\text{best})_{k\in\N}$ and the above inequality.
\end{proof}

\subsection{Diminishing stepsize}

In this subsection, we analyze the convergence of the inexact P$\epsilon$-SMs presented in sections \ref{asbt} and \ref{relat}, when the sequence of stepsizes $(\alpha_k)_{k\in\N}$ satisfies $\lim_{k\to\infty}\alpha_k=0$ and $\sum_{k=0}^\infty\alpha_k=\infty$. In this case, we are able to establish (under suitable assumptions) exact convergence for the sequences calculated by Algorithms \ref{alg:1} and \ref{alg:2}, provided that there exists a solution of problem \eqref{eq:problem}. Otherwise, we will have that $(x^k)_{k\in\N}$ is an unbounded sequence.

\begin{theorem}
\label{theo:fejer}
Let $(x^k)_{k\in\N}$ be the sequence generated by any one of the Algorithms $\ref{alg:1}$-$\ref{alg:2}$. Suppose that the sequences $(\alpha_k)_{k\in\N}$ and $(\epsilon_k)_{k\in\N}$ are such that $\lim_{k\to\infty}\alpha_k=0$, $\sum\limits_{k=0}^\infty\alpha_k=\infty$ and $\lim\limits_{k\to\infty}\epsilon_k=\epsilon$.
Then, we have
\begin{itemize}
\item[(i)] $\lim\inf_{k\to\infty}(f+g)(x^k)\leq s^\star+\epsilon$.
\end{itemize}

\noindent In addition, assume that $S^\star\neq \emptyset$,\, $\sum\limits_{k=0}^\infty \alpha_k^2<\infty$ and $\sum\limits_{k=0}^\infty\alpha_k\epsilon_k<\infty$, then

\begin{itemize}
\item[(ii)] $(x^k)_{k\in\N}$ is quasi-Fej\'er convergent to the solution set $S^\star$;
\item[(iii)] $(x^k)_{k\in\N}$ is bounded;
\item[(iv)] $\epsilon=0$, $\lim_{k\to\infty}(f+g)(x^k)=s^\star$ and the sequence $(x^k)_{k\in\N}$ is weakly convergent to a point $x^\star\in S^\star$.
\end{itemize}
\end{theorem}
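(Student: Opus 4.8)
The plan is to funnel both algorithms through a single \emph{master inequality}. Combining Lemma~\ref{lem:2} and Lemma~\ref{lem:1} with assumption (A.3), for every $x\in C$ and every $k$ one has
\[
\norm{x^{k+1}-x}^2\le\norm{x^k-x}^2+2\alpha_k\big((f+g)(x)-(f+g)(x^k)+\epsilon_k\big)+B_k,
\]
where $B_k\le r_k^2+4c^2\alpha_k^2$ for Algorithm~\ref{alg:1} and $B_k\le C_\sigma c^2\alpha_k^2$ for Algorithm~\ref{alg:2}. The two structural facts I would exploit are that $\sum_k B_k<\infty$ whenever $\sum_k\alpha_k^2<\infty$ (the sequence $(r_k)$ being square summable by construction), and that $B_k$ is $\mathcal{O}(\alpha_k^2)$, hence negligible against the linear term $\alpha_k$ once $\alpha_k\to0$. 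Everything below is read off from this one estimate, so the two methods are handled simultaneously.

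For (i) I would argue by contradiction. If $\liminf_k(f+g)(x^k)>s^\star+\epsilon$, then, using $s^\star=\inf_C(f+g)$ and $\epsilon_k\to\epsilon$, I pick $\eta>0$, a point $\hat x\in C$ and an index $k_0$ with $(f+g)(x^k)-(f+g)(\hat x)-\epsilon_k\ge\eta$ for $k\ge k_0$. Taking $x=\hat x$ gives $\norm{x^{k+1}-\hat x}^2\le\norm{x^k-\hat x}^2-2\alpha_k\eta+B_k$, and since $\alpha_k\to0$ I absorb the quadratic part, $-2\alpha_k\eta+B_k\le-\alpha_k\eta+r_k^2$ for large $k$ (with $r_k\equiv0$ in the relative-error case); telescoping then drives the right-hand side to $-\infty$ because $\sum_k\alpha_k=\infty$ while $\sum_k r_k^2<\infty$, contradicting nonnegativity. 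Note this part needs only $\sum_k\alpha_k=\infty$. For (ii), I take $x=x^\star\in S^\star$, discard the nonpositive term $(f+g)(x^\star)-(f+g)(x^k)$, and obtain $\norm{x^{k+1}-x^\star}^2\le\norm{x^k-x^\star}^2+(2\alpha_k\epsilon_k+B_k)$ with $\sum_k(2\alpha_k\epsilon_k+B_k)<\infty$, which is exactly quasi-Fej\'er convergence to $S^\star$; then (iii) is immediate from Proposition~\ref{prop:fejer}(i).

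For (iv), I first get $\epsilon=0$: if $\epsilon>0$ then $\epsilon_k\ge\epsilon/2$ eventually, whence $\sum_k\alpha_k\epsilon_k\ge(\epsilon/2)\sum_k\alpha_k=\infty$, contradicting the hypothesis. With $\epsilon=0$, part (i) gives $\liminf_k(f+g)(x^k)\le s^\star$, and since $(f+g)(x^k)\ge s^\star$ this is an equality; summing the master inequality at $x=x^\star$ also yields $\sum_k\alpha_k\big((f+g)(x^k)-s^\star\big)<\infty$. To promote the $\liminf$ to a genuine limit I would use the \emph{slow variation} of the iterates: since $x^k\in C$ and $P_C$ is nonexpansive, $\norm{x^{k+1}-x^k}=\norm{P_C(y^k-\alpha_k\overline{w}^k)-P_C(x^k)}\le\alpha_k\norm{u^k+\overline{w}^k}\le2c\alpha_k$, so the $\epsilon$-subgradient inequality at $x^{k+1}$ (with $\norm{u^{k+1}},\norm{w^{k+1}}\le c$) gives $(f+g)(x^{k+1})\le(f+g)(x^k)+4c^2\alpha_k+\epsilon_{k+1}$. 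Combining this bounded per-step increase with $\sum_k\alpha_k((f+g)(x^k)-s^\star)<\infty$ in a crossing (excursion) argument forces $\lim_k(f+g)(x^k)=s^\star$. Finally, since $(x^k)$ is bounded, $f+g$ is weakly lower semicontinuous, and $C$ is weakly closed, every weak cluster point $\bar x$ obeys $(f+g)(\bar x)\le\lim_k(f+g)(x^k)=s^\star$, hence $\bar x\in S^\star$; Proposition~\ref{prop:fejer}(ii) then delivers weak convergence of the whole sequence to some $x^\star\in S^\star$.

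The routine parts are (i)--(iii) and the identity $\epsilon=0$; the genuine difficulty I expect is the passage from $\liminf(f+g)(x^k)=s^\star$ to the full limit in (iv), since these schemes are not descent methods and the values may oscillate. The slow-variation estimate $\norm{x^{k+1}-x^k}\le2c\alpha_k$ is the decisive ingredient, as it prevents the values from exceeding $s^\star$ by more than $\mathcal{O}(\alpha_k)$ per step. The delicate point I anticipate fighting with is the $\epsilon_{k}$ term in this per-step increase: the crossing argument controls the $\alpha_k$-contribution through $\sum_k\alpha_k((f+g)(x^k)-s^\star)<\infty$, but the errors enter \emph{unweighted}, so one must lean carefully on $\epsilon_k\to0$ together with $\sum_k\alpha_k\epsilon_k<\infty$ to keep each excursion's error budget from swamping the contradiction. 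Once the full limit is secured, weak convergence is a soft consequence of Fej\'er monotonicity via Proposition~\ref{prop:fejer}(ii).
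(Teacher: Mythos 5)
Your parts (i)--(iii), the identification $\epsilon=0$, and the derivation of $\sum_k\alpha_k\beta_k<\infty$ with $\beta_k=(f+g)(x^k)-s^\star$ coincide with the paper's proof step by step (the paper also proves (i) by the same contradiction argument, referring back to Theorem \ref{theo:const-step}, and gets (ii)--(iii) from Lemmas \ref{lem:2}--\ref{lem:1} plus Proposition \ref{prop:fejer}). The only real divergence is the passage from $\liminf_k\beta_k=0$ to $\lim_k\beta_k=0$: the paper disposes of it by citing Proposition 2 of Alber--Iusem--Solodov applied to $(\beta_k)$ and $(\alpha_k)$, whereas you propose to prove that step inline via the slow-variation bound $\norm{x^{k+1}-x^k}\le 2c\alpha_k$ and an excursion argument. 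These are the same argument: the increment condition $\beta_{k+1}-\beta_k\le\mathrm{const}\cdot\alpha_k$ is exactly the hypothesis of the cited proposition, and your excursion argument is its proof. So you have correctly located the load-bearing ingredient that the paper leaves implicit.

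The difficulty you flag at the end is, however, a genuine gap, not just a delicate point: your per-step bound is $\beta_{k+1}-\beta_k\le 4c^2\alpha_k+\epsilon_{k+1}$, and the hypotheses $\epsilon_k\to0$, $\sum_k\alpha_k\epsilon_k<\infty$ do \emph{not} suffice to close the excursion argument, because the errors enter unweighted. Concretely, on an excursion $[m,n]$ where $\beta_j\ge\delta/2$, summability of $\alpha_k\beta_k$ forces $\sum_{j=m}^{n}\alpha_j\to0$, but the rise $\delta/2\le 4c^2\sum_{j=m}^{n-1}\alpha_j+\sum_{j=m+1}^{n}\epsilon_j$ can still be powered entirely by the second sum: one can build nonnegative sequences satisfying all of $\alpha_k\to0$, $\sum_k\alpha_k=\infty$, $\sum_k\alpha_k^2<\infty$, $\epsilon_k\to0$, $\sum_k\alpha_k\epsilon_k<\infty$, $\sum_k\alpha_k\beta_k<\infty$ and the two-sided increment bound, in which excursions lengthen like $1/\epsilon$ while carrying negligible $\alpha$-mass, so that $\limsup_k\beta_k>0$. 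Hence the abstract lemma you are implicitly invoking is false under the stated hypotheses; some extra assumption such as $\sum_k\epsilon_k<\infty$ or $\epsilon_k=\mathcal{O}(\alpha_k)$ is needed (either one makes $\sum_{j=m+1}^{n}\epsilon_j$ a vanishing tail and the contradiction goes through), unless one exploits further structure of the iterates beyond the master inequality. You should be aware that the paper's own proof silently suffers from the same lacuna: it never verifies the increment hypothesis of the Alber--Iusem--Solodov proposition, and with $\epsilon$-subgradients that hypothesis only holds up to the same unweighted $\epsilon_{k+1}$ term. So your proposal is faithful to the paper, but the step you singled out as the hard one needs either a strengthened hypothesis on $(\epsilon_k)$ or a different argument to be complete.
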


\begin{proof}
   \item[(i)]
It can be proven with analysis similar to that in the proof of Theorem \ref{theo:const-step}, and for the sake of brevity we omit its proof.

    \item[(ii)] Follows by assumption A.3 and taking $x=x^\star\in S^\star$ in Lemmas \ref{lem:2} and \ref{lem:1}.

    \item[(iii)] It is a direct consequence of the previous item (see Proposition \ref{prop:fejer}(i)).

 \item[(iv)] Since we are assuming that $\sum\limits_{k=0}^\infty\alpha_k\epsilon_k<\infty$, $\sum\limits_{k=0}^\infty\alpha_k=\infty$ and $\lim_{k\to\infty}\epsilon_k=\epsilon$, it is clear that we necessarily have $\epsilon=0$. Hence, from item (i) it follows that $\liminf_{k\to\infty} (f+g)(x^k)\leq s^\star$, which implies $\liminf_{k\to\infty} (f+g)(x^k)=s^\star$.

 Now, we assume that $(x^k)_{k\in\N}$ is the sequence computed by Algorithm \ref{alg:1}, and define $\beta_k = (f+g)(x^k) - s^\star\geq0$ for all $k\in\N$. We take $x^\star\in S^\star$ and apply Lemma \ref{lem:2} with $x=x^\star$ to have
 \begin{align*}
 \norm{x^{k+1}-x^\star}^2 \leq &\norm{x^k-x^\star}^2 -2\alpha_k\beta_k + 2\alpha_k\epsilon_k + r_k^2 + 4\alpha_k^2c^2\\
 \vdots & \\
  \leq & \norm{x^0-x^\star}^2 - 2\sum_{j=0}^k\alpha_j\beta_j + 2\sum_{j=0}^k\alpha_j\epsilon_j + \sum_{j=0}^kr_j^2 + 4c^2\sum_{j=0}^k\alpha_j^2.
 \end{align*}
Rearranging the terms in the inequality above we obtain 
  \begin{equation*}
 2\sum_{j=0}^\infty \alpha_j\beta_j \leq \norm{x^0-x^\star} ^2 + 2\sum_{j=0}^\infty \alpha_j\epsilon_j + \sum_{j=0}^\infty r_j^2 + 4c^2\sum_{j=0}^\infty \alpha_j^2 <\infty.
 \end{equation*}
Hence, we apply \cite[Proposition 2]{alber1998projected} to the sequences $(\beta_k)_{k\in\N}$ and $(\alpha_k)_{k\in\N}$ to conclude that $\lim_{k\to\infty}\beta_k=0$, which directly implies $\lim_{k\to\infty}(f+g)(x^k)=s^\star$

 For Algorithm \ref{alg:2}, we define the sequence $(\beta_k)_{k\in\N}$ as before and apply Lemma \ref{lem:1} to obtain
 \begin{equation*}
 \begin{split}
 \norm{x^{k+1}-x^\star}^2 \leq \norm{x^0-x^\star}^2 - 2\sum_{j=0}^k\alpha_j\beta_j + 2\sum_{j=0}^k\alpha_j\epsilon_j + c^2C_\sigma\sum_{j=0}^k\alpha_j^2.
 \end{split}
 \end{equation*}
Therefore, from inequality above we have
 \begin{equation*}
 2\sum_{j=0}^\infty \alpha_j\beta_j \leq \norm{x^0-x^\star} ^2 + 2\sum_{j=0}^\infty\alpha_j\epsilon_j + c^2C_\sigma\sum_{j=0}^\infty\alpha_j^2 <\infty,
 \end{equation*}
and applying \cite[Proposition 2]{alber1998projected} to the sequences $(\beta_k)_{k\in\N}$ and $(\alpha_k)_{k\in\N}$ we deduce that $\lim_{k\to\infty}\beta_k=0$, which yields $\lim_{k\to\infty}(f+g)(x^k)=s^\star$.

Thus, we have that the sequence $(x^k)_{k\in\N}$ calculated by any one of the Algorithms \ref{alg:1}-\ref{alg:2} satisfies that $\lim_{k\to\infty}(f+g)(x^k)=s^\star$.
To conclude the proof we take $\tilde{x}$ a week accumulation point of $(x^k)_{k\in\N}$, which exists by item (ii), and $(x^{j_k})_{k\in\N}$ a subsequence such that $x^{j_k}\rightharpoonup\tilde{x}$ as $k\to\infty$. Since $f+g$ is a closed function, it follows that
\begin{equation*}
(f+g)(\tilde{x}) \leq \liminf_{k\to\infty}(f+g)(x^{j_k})=\lim_{k\to\infty}(f+g)(x^k)=s^\star.
\end{equation*}
Hence, we deduce that $\tilde{x}\in S^\star$
and because $\tilde{x}$ was arbitrary we have that all weak accumulation points of $(x^k)_{k\in\N}$ belong to $S^\star$. Due to item (ii) and Proposition \ref{prop:fejer}(ii), we conclude that the sequence $(x^k)_{k\in\N}$ converges weakly to some point in the solution set $S^\star$.
\end{proof}

\begin{theorem}
Assume the hypotheses of Theorem $\ref{theo:fejer}$. In addition, suppose that $\epsilon=0$ and $S^\star=\emptyset$. Then, the sequence $(x^k)_{k\in\N}$ generated by any one of the Algorithms $\ref{alg:1}$-$\ref{alg:2}$ is unbounded.
\end{theorem}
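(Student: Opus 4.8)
The plan is to argue by contradiction: assuming that $(x^k)_{k\in\N}$ is bounded, I would produce a minimizer of $f+g$ over $C$, contradicting $S^\star=\emptyset$. The whole argument rests on combining the asymptotic optimality of the functional values (item (i) of Theorem \ref{theo:fejer}) with the weak lower semicontinuity of closed convex functions, exactly as in the final part of the proof of Theorem \ref{theo:fejer}(iv). Note that this reasoning applies uniformly to the sequences generated by either Algorithm \ref{alg:1} or Algorithm \ref{alg:2}, since it only uses item (i) and the fact that every iterate lies in $C$.

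First I would record that every iterate lies in $C$ (because $x^0\in C$ and $x^{k+1}=P_C(\cdot)$), so $(f+g)(x^k)\geq s^\star$ for all $k$. Since $\epsilon=0$, item (i) of Theorem \ref{theo:fejer} gives $\liminf_{k\to\infty}(f+g)(x^k)\leq s^\star$, whence $\liminf_{k\to\infty}(f+g)(x^k)=s^\star$. I would then select a subsequence $(x^{j_k})_{k\in\N}$ along which $(f+g)(x^{j_k})\to s^\star$. If $(x^k)_{k\in\N}$ were bounded, this subsequence would admit a further weakly convergent subsequence, say (after relabelling) $x^{j_k}\rightharpoonup\tilde{x}$; since $C$ is closed and convex, it is weakly closed, so $\tilde{x}\in C$.

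Next I would invoke weak lower semicontinuity of $f+g$ (valid because $f+g$ is a proper closed convex function) to obtain $(f+g)(\tilde{x})\leq\liminf_{k\to\infty}(f+g)(x^{j_k})=s^\star$. If $s^\star$ is finite, then $\tilde{x}\in C$ forces $(f+g)(\tilde{x})\geq s^\star$, so $(f+g)(\tilde{x})=s^\star$ and $\tilde{x}\in S^\star$, contradicting $S^\star=\emptyset$. If instead $s^\star=-\infty$, the same inequality yields $(f+g)(\tilde{x})=-\infty$, which is impossible because $f+g$ is proper (its domain equals $\dom g\neq\emptyset$ by A.1--A.2, and it never takes the value $-\infty$). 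In either case we reach a contradiction, so $(x^k)_{k\in\N}$ must be unbounded.

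The routine parts are the two subsequence extractions and the appeal to weak closedness of $C$. The one point that needs care — and which I regard as the main (mild) obstacle — is justifying the passage to the weak limit in the objective, namely that a proper closed convex function is weakly lower semicontinuous, together with treating the degenerate case $s^\star=-\infty$ separately through properness rather than through the minimizer argument.
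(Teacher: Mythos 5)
Your proposal is correct and follows essentially the same route as the paper's proof: contradiction via boundedness, extraction of a weakly convergent subsequence realizing the liminf, weak lower semicontinuity of the closed convex function $f+g$, and item (i) of Theorem \ref{theo:fejer} to conclude $\tilde{x}\in S^\star$. Your explicit separate treatment of the degenerate case $s^\star=-\infty$ via properness is a small refinement the paper glosses over, but it does not change the argument.
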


\begin{proof}
Suppose by contradiction that $(x^k)_{k\in\N}$ is a bounded sequence and take a subsequence $(x^{j_k})_{k\in\N}$ such that $\lim_{k\to\infty}(f+g)(x^{j_k})=\liminf_{k\to\infty}(f+g)(x^k)$. Refining $(x^{j_k})_{k\in\N}$ if necessary, we may assume that it converges weakly to some point $\tilde{x}\in C$. Since $f+g$ is closed, it follows that
\begin{equation*}
(f+g)(\tilde{x}) \leq \liminf_{k\to\infty}(f+g)(x^{j_k})= \lim_{k\to\infty}(f+g)(x^{j_k}) = \liminf_{k\to\infty}(f+g)(x^k)\leq s^\star,
\end{equation*}
where the last inequality above follows from item (i) in Theorem \ref{theo:fejer} and the assumption that $\epsilon=0$. Equation above clearly implies that $\tilde{x}\in S^\star$, which is a contradiction.
\end{proof}

\subsection{Polyak stepsize}

In \cite{Pol87} Polyak suggested a stepsize for subgradient methods that can be used when the optimal value of 
the problem is known. Since in practical problems this data is usually not available, 
other modifications of Polyak rule have been proposed that replace the unknown optimal value with an estimate, see for instance \cite{bertsekas2014convex}.
In this subsection, we present stepsize rules for Algorithms \ref{alg:1} and \ref{alg:2} that are inspired in Polyak's step rule, and we analyze the convergence of these methods. We consider both cases, when $s^\star$ is known and replacing it with an estimate.

We observe that the rules considered in this subsection can also be motivated by equations \eqref{eq:stepsize1} and \eqref{eq:stepsize-1}. Indeed, if we replace the estimate $s_k$ by $s^\star$ in equations \eqref{eq:pol-step} and \eqref{eq:pol-step1} below, then these choices of $\alpha_k$ satisfy the desired inequalities in \eqref{eq:stepsize} and \eqref{eq:stepsize-1}, which guarantees the quasi-Fejér convergence of $(x^k)_{k\in\N}$ to $S^\star$.

If $x^k$ is the iterated calculated with Algorithm \ref{alg:1} at iteration $k$, we fix $w^k\in\partial g(x^k)$ such that it satisfies condition A.3, and then we define the stepsize $\alpha_k$ as
\begin{equation}
\label{eq:pol-step}
\alpha_k : =\gamma_k\dfrac{(f+g)(x^k)-s_k-\epsilon_k}{\norm{u^k+w^k}^2}.
\end{equation}
Similarly, if $x^k$ is the current iterated generated by Algorithm \ref{alg:2}, then we choose $w^k\in\partial g(x^k)$ satisfying A.3 and define the stepsize $\alpha_k$ by
\begin{equation}
\label{eq:pol-step1}
\alpha_k := \gamma_k\dfrac{(f+g)(x^k)-s_k-\epsilon_k}{\sigma^2c^2/(1-\sigma)^2+\norm{u^k+w^k}^2}.
\end{equation}

In equations \eqref{eq:pol-step} and \eqref{eq:pol-step1} we are assuming that $0<\underline{\gamma}\leq\gamma_k\leq\overline{\gamma}<2$ for all $k\in\N$, and $(s_k)_{k\in\N}$ is a monotone decreasing sequence that converges to some $\tilde{s}$ and such that $s_k+\epsilon_k<(f+g)(x_k)$ for every $k\in\N$.

To analyze the convergence of Algorithms \ref{alg:1} and \ref{alg:2} with $\alpha_k$ given by \eqref{eq:pol-step} and \eqref{eq:pol-step1}, respectively, we will first consider the case where $\tilde{s}\geq s^\star$.

\begin{theorem}
\label{theo:1}
Let $(x^k)_{k\in\N}$ be the sequence generated by Algorithm $\ref{alg:1}$ with $\alpha_k$ as in \eqref{eq:pol-step} and assume that $\lim_{k\to\infty}\epsilon_k=\epsilon$.
Define the level set
\begin{equation}
\label{eq:lev-set}
L_{f+g}(\tilde{s}) := \set{x\in C\,:\,f(x)+g(x)\leq\tilde{s}}
\end{equation}
and suppose that $L_{f+g}(\tilde{s})\neq\emptyset$. Then, the following statements hold.
\begin{itemize}
\item[(i)] $(x^k)_{k\in\N}$ is quasi-Fejér convergent to $L_{f+g}(\tilde{s})$;
\item[(ii)] $\lim_{k\to\infty}(f+g)(x^k)= \tilde{s}+\epsilon$.
\end{itemize}

\noindent In addition, if $\epsilon=0$, then

\begin{itemize}
\item[(iii)] $(x^k)_{k\in\N}$ is weakly convergent to some point in $L_{f+g}(\tilde{s})$.
\end{itemize}
\end{theorem}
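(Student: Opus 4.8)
The plan is to feed the Polyak-type stepsize \eqref{eq:pol-step} into the basic estimate of Lemma~\ref{lem:2}, exploiting the fact that this stepsize is tuned so that the squared term cancels part of the linear descent term. Fix an arbitrary $x\in L_{f+g}(\tilde{s})$ and set $\theta_k:=(f+g)(x^k)-s_k-\epsilon_k>0$, where positivity is guaranteed by the hypothesis $s_k+\epsilon_k<(f+g)(x^k)$. First I would note that \eqref{eq:pol-step} gives $\alpha_k^2\norm{u^k+w^k}^2=\gamma_k\alpha_k\theta_k$, and that since $(f+g)(x)\leq\tilde{s}\leq s_k$ one has $(f+g)(x)-(f+g)(x^k)+\epsilon_k\leq-\theta_k$. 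Substituting both into \eqref{eq:sta-lema1} I expect to obtain
\[
\norm{x^{k+1}-x}^2\leq\norm{x^k-x}^2-(2-\gamma_k)\alpha_k\theta_k+r_k^2 .
\]
Since $\gamma_k\leq\overline{\gamma}<2$ and $\alpha_k\theta_k\geq0$, the middle term is nonpositive, so $\norm{x^{k+1}-x}^2\leq\norm{x^k-x}^2+r_k^2$ with $(r_k)\in\ell_2(\R)$; this is precisely quasi-Fejér convergence to $L_{f+g}(\tilde{s})$, giving~(i).

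For (ii) I would telescope the displayed inequality to get $(2-\overline{\gamma})\sum_k\alpha_k\theta_k\leq\norm{x^0-x}^2+\sum_k r_k^2<\infty$, hence $\sum_k\alpha_k\theta_k<\infty$. The point is then to upgrade this to $\theta_k\to0$. Using assumption A.3, which bounds $\norm{u^k+w^k}\leq 2c$, together with $\gamma_k\geq\underline{\gamma}$, the definition \eqref{eq:pol-step} yields $\alpha_k\theta_k=\gamma_k\theta_k^2/\norm{u^k+w^k}^2\geq(\underline{\gamma}/4c^2)\theta_k^2$, so $\sum_k\theta_k^2<\infty$ and $\theta_k\to0$. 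Recalling that $s_k\to\tilde{s}$ and $\epsilon_k\to\epsilon$, this reads $(f+g)(x^k)\to\tilde{s}+\epsilon$.

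Finally, for (iii) with $\epsilon=0$, item (ii) gives $(f+g)(x^k)\to\tilde{s}$, and by (i) together with Proposition~\ref{prop:fejer}(i) the sequence is bounded, hence admits weak accumulation points. For any such $\tilde{x}$, taking a subsequence $x^{j_k}\rightharpoonup\tilde{x}$ and invoking weak closedness of $C$ (convex and closed) and weak lower semicontinuity of the proper closed convex function $f+g$, I would conclude $\tilde{x}\in C$ and $(f+g)(\tilde{x})\leq\liminf_k(f+g)(x^{j_k})=\tilde{s}$, i.e.\ $\tilde{x}\in L_{f+g}(\tilde{s})$. Proposition~\ref{prop:fejer}(ii) then delivers weak convergence of $(x^k)_{k\in\N}$ to a point of the level set.

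I expect the only genuinely delicate point to be the passage from $\sum_k\alpha_k\theta_k<\infty$ to $\theta_k\to0$: this is where A.3 (the uniform bound on $\norm{u^k+w^k}$) and the lower bound $\gamma_k\geq\underline{\gamma}>0$ are essential, since without them the stepsize could shrink too fast while the function values stay bounded away from $\tilde{s}+\epsilon$. The cancellation producing the first display is the other step to check carefully, but it is purely algebraic once \eqref{eq:pol-step} is substituted.
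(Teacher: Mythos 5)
Your proposal is correct and follows essentially the same route as the paper: substitute the Polyak stepsize \eqref{eq:pol-step} into Lemma \ref{lem:2}, use $(f+g)(x)\leq\tilde s\leq s_k$ to produce the quadratic decrease term $-\gamma_k(2-\gamma_k)\theta_k^2/\norm{u^k+w^k}^2$ (your $-(2-\gamma_k)\alpha_k\theta_k$ is the same quantity), telescope with $(r_k)\in\ell_2$ and the bound $\norm{u^k+w^k}^2\leq 4c^2$ from A.3 to get $\sum_k\theta_k^2<\infty$, and finish (iii) via boundedness, weak lower semicontinuity of $f+g$, and Proposition \ref{prop:fejer}(ii). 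The only cosmetic difference is that you pass through $\sum_k\alpha_k\theta_k<\infty$ before converting to $\sum_k\theta_k^2<\infty$, whereas the paper bounds the quadratic term directly; both are the same estimate.
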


\begin{proof}
\item[(i)] We take $\tilde{x}\in L_{f+g}(\tilde{s})$ and apply Lemma \ref{lem:2} with $x=\tilde{x}$ to obtain
\begin{equation*}
\begin{split}
\norm{x^{k+1}-\tilde{x}}^2 \leq & \norm{x^k-\tilde{x}}^2 + 2\alpha_k((f+g)(\tilde{x}) - (f+g)(x^k)+\epsilon_k) + r_k^2 + \alpha_k^2\norm{u^k+w^k}^2\\
\leq & \norm{x^k-\tilde{x}}^2
+ 2\alpha_k(s_k - (f+g)(x^k)+\epsilon_k) + r_k^2 + \alpha_k^2\norm{u^k+w^k}^2\\
= & \norm{x^k-\tilde{x}}^2 - \gamma_k(2-\gamma_k)\dfrac{((f+g)(x^k)-s_k-\epsilon_k)^2}{\norm{u^k+w^k}^2}+ r_k^2,
\end{split}
\end{equation*}
where the second inequality above follows from the assumption $(f+g)(\tilde{x})\leq\tilde{s}\leq s_k$, and the equality is due to the definition of $\alpha_k$ in \eqref{eq:pol-step}. Since $\gamma_k\in[\underline{\gamma},\overline{\gamma}]$, we have
\begin{equation}
\label{eq:eq-3}
\norm{x^{k+1}-\tilde{x}}^2 \leq \norm{x^{k}-\tilde{x}}^2 - \underline{\gamma}(2-\overline{\gamma})\dfrac{((f+g)(x^k)-s_k-\epsilon_k)^2}{\norm{u^k+w^k}^2}+r_k^2.
\end{equation}
Thus, item (i) follows directly from equation above and Definition \ref{def:quasi-fejer}.

\item[(ii)] We combine \eqref{eq:eq-3} with the fact that $\underline{\gamma},\,\overline{\gamma}\in\left]0,2\right[$ and $\norm{u^k+w^k}^2\leq4c^2$ for all $k\in\N$, to obtain
\begin{equation*}
\norm{x^{k+1}-\tilde{x}}^2 \leq \norm{x^k-\tilde{x}}^2-\dfrac{\underline{\gamma}(2-\overline{\gamma})}{4c^2}((f+g)(x^k)-s_k-\epsilon_k)^2 + r_k^2.
\end{equation*}
Applying recursively the equation above and rearranging the terms, we have
\begin{equation}
\label{eq:5}
\dfrac{\underline{\gamma}(2-\overline{\gamma})}{4c^2}\sum_{j=0}^k((f+g)(x^j) - s_j -\epsilon_j)^2 \leq \norm{x^0-\tilde{x}}^2 + \sum_{j=0}^kr_j^2.
\end{equation}
Since inequality above holds for all $k\in\N$ and
$(r_k)_{k\in\N}\in\ell_2(\R)$, we conclude that
\begin{equation*}
\sum_{j=0}^\infty ((f+g)(x^j) - s_j - \epsilon_j) ^2 < + \infty.
\end{equation*}
From which we deduce item (ii).
\item[(iii)] Assuming now that $\epsilon=0$, from item (ii) we have $\lim_{k\to\infty}(f+g)(x^k)=\tilde{s}$. Moreover, item (i), together with Proposition \ref{prop:fejer}(i), yields that $(x^k)_{k\in\N}$ is a bounded sequence. Therefore, taking $\tilde{x}$ a weak accumulation point of this sequence and $(x^{j_k})_{k\in\N}$ a subsequence such that $x^{j_k}\rightharpoonup\tilde{x}$ as $k\to\infty$, it follows that $\lim_{k\to\infty}(f+g)(x^{j_k})=\tilde{s}$. Since $f+g$ is a closed function, we have
\begin{equation*}
(f+g)(\tilde{x}) \leq \liminf_{k\to\infty}(f+g)(x^{j_k})=\tilde{s}.
\end{equation*}
Hence, $\tilde{x}\in L_{f+g}(\tilde{s})$ and because $\tilde{x}$ was an arbitrary accumulation point of $(x^k)_{k\in\N}$, we conclude the proof using Proposition \ref{prop:fejer}(ii).
\end{proof}

Similar results to those presented in Theorem \ref{theo:1} can be established for the sequence generated by Algorithm \ref{alg:2} with stepsize defined by \eqref{eq:pol-step1}, as it is shown in the following result. The theorem below may be proven in much the same way as Theorem \ref{theo:1}.

\begin{theorem}
\label{theo:2}
Let $(x^k)_{k\in\N}$ be the sequence generated by Algorithm $\ref{alg:2}$ with $\alpha_k$ as in \eqref{eq:pol-step1}. Suppose that the level set $L_{f+g}(\tilde{s})$ defined in \eqref{eq:lev-set} is non-empty and $\lim_{k\to\infty}\epsilon_k=\epsilon$. Then, the following statements hold.
\begin{itemize}
\item[(i)] $(x^k)_{k\in\N}$ is Fejér convergent to $L_{f+g}(\tilde{s})$;
\item[(ii)] $\lim_{k\to\infty}(f+g)(x^k)= \tilde{s}+\epsilon$.
\end{itemize}

\noindent In addition, if $\epsilon=0$, then

\begin{itemize}
\item[(iii)] $(x^k)_{k\in\N}$ is weakly convergent to some point in $L_{f+g}(\tilde{s})$.
\end{itemize}
\end{theorem}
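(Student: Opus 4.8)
The plan is to follow the proof of Theorem \ref{theo:1} almost verbatim, replacing Lemma \ref{lem:2} by Lemma \ref{lem:1} and the stepsize \eqref{eq:pol-step} by \eqref{eq:pol-step1}. The decisive structural difference is that Algorithm \ref{alg:2} carries no absolute error sequence $(r_k)_{k\in\N}$: the inexactness is controlled by the relative criterion through $\sigma$, and this will be absorbed into the stepsize construction. Consequently the recurrence I obtain will contain \emph{no} additive $r_k^2$ term, which is exactly what upgrades the conclusion in (i) from quasi-Fej\'er to genuine Fej\'er convergence.

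For item (i), I would fix $\tilde{x}\in L_{f+g}(\tilde{s})$ and apply Lemma \ref{lem:1} with $x=\tilde{x}$. Writing $D_k:=\sigma^2c^2/(1-\sigma)^2+\norm{u^k+w^k}^2$ for the denominator appearing in \eqref{eq:pol-step1}, assumption A.3 gives $\norm{\overline{w}^k}\leq c$, so the two quadratic error terms of Lemma \ref{lem:1} are together bounded by $\alpha_k^2 D_k$. Using $(f+g)(\tilde{x})\leq\tilde{s}\leq s_k$ to replace $(f+g)(\tilde{x})$ by $s_k$, and then substituting the identity $(f+g)(x^k)-s_k-\epsilon_k=\alpha_k D_k/\gamma_k$ that comes from the definition of $\alpha_k$, the linear and quadratic terms in $\alpha_k$ combine to yield
\begin{equation*}
\norm{x^{k+1}-\tilde{x}}^2 \leq \norm{x^k-\tilde{x}}^2 - \gamma_k(2-\gamma_k)\dfrac{((f+g)(x^k)-s_k-\epsilon_k)^2}{D_k}.
\end{equation*}
Since $0<\underline{\gamma}\leq\gamma_k\leq\overline{\gamma}<2$, the coefficient $\gamma_k(2-\gamma_k)$ is bounded below by $\underline{\gamma}(2-\overline{\gamma})>0$; because there is no positive additive term on the right-hand side, Fej\'er convergence to $L_{f+g}(\tilde{s})$ follows at once from Definition \ref{def:quasi-fejer}.

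For item (ii), I would bound $D_k\leq c^2C_\sigma$ via A.3 (using $\norm{u^k+w^k}^2\leq 4c^2$ and the definition $C_\sigma=\sigma^2/(1-\sigma)^2+4$), apply the displayed inequality recursively, and rearrange to obtain
\begin{equation*}
\dfrac{\underline{\gamma}(2-\overline{\gamma})}{c^2C_\sigma}\sum_{j=0}^k((f+g)(x^j)-s_j-\epsilon_j)^2 \leq \norm{x^0-\tilde{x}}^2.
\end{equation*}
Letting $k\to\infty$ forces $(f+g)(x^k)-s_k-\epsilon_k\to0$, and since $s_k\to\tilde{s}$ and $\epsilon_k\to\epsilon$ this gives $\lim_{k\to\infty}(f+g)(x^k)=\tilde{s}+\epsilon$. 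For item (iii), assuming $\epsilon=0$, item (ii) gives $(f+g)(x^k)\to\tilde{s}$, while item (i) with Proposition \ref{prop:fejer}(i) gives boundedness; taking any weak accumulation point $\tilde{x}$ along a subsequence $x^{j_k}\rightharpoonup\tilde{x}$ and invoking the weak lower semicontinuity of the closed function $f+g$ yields $(f+g)(\tilde{x})\leq\tilde{s}$, so $\tilde{x}\in L_{f+g}(\tilde{s})$; Proposition \ref{prop:fejer}(ii) then delivers weak convergence of the whole sequence to a point of $L_{f+g}(\tilde{s})$.

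The main obstacle is the algebraic step in item (i): one must verify that the stepsize \eqref{eq:pol-step1} is engineered precisely so that, after the bound $\norm{\overline{w}^k}\leq c$ from A.3, the sum of the two error terms of Lemma \ref{lem:1} is dominated by $\alpha_k^2 D_k$, which the linear decrease term then absorbs. Producing the exact factor $\gamma_k(2-\gamma_k)$ — positive precisely because $\gamma_k<2$ — and confirming that the relative-error mechanism leaves behind no summable residual (so that genuine Fej\'er, rather than merely quasi-Fej\'er, convergence holds) is the only genuinely delicate point; the remaining steps are routine adaptations of the proof of Theorem \ref{theo:1}.
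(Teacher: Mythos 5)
Your proposal is correct and follows essentially the same route as the paper: the paper derives exactly your key inequality $\norm{x^{k+1}-\tilde{x}}^2 \leq \norm{x^k-\tilde{x}}^2 - \underline{\gamma}(2-\overline{\gamma})\frac{((f+g)(x^k)-s_k-\epsilon_k)^2}{\sigma^2c^2/(1-\sigma)^2+\norm{u^k+w^k}^2}$ from Lemma \ref{lem:1} and \eqref{eq:pol-step1}, and then states that the rest runs as in Theorem \ref{theo:1}. Your observation that the absence of the $r_k^2$ residual is what upgrades quasi-Fej\'er to Fej\'er convergence is precisely the point of the modification.
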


\begin{proof}
Taking $\tilde{x}\in L_{f+g}(\tilde{s})$ and using Lemma \ref{lem:1} and \eqref{eq:pol-step1}, in place of \eqref{eq:eq-3}, we obtain
\begin{equation}
\label{eq:eq4}
\norm{x^{k+1}-\tilde{x}}^2 \leq \norm{x^k-\tilde{x}}^2 - \underline{\gamma}(2-\overline{\gamma})\dfrac{((f+g)(x^k)-s_k-\epsilon_k)^2}{\sigma^2c^2/(1-\sigma)^2+\norm{u^k+w^k}^2}.
\end{equation}
The rest of the proof runs as before.
\end{proof}

In the special case where $s^\star$ is known and finite, and we take $\epsilon_k\equiv0$, we can define the stepsize $\alpha_k$ by
\begin{equation}
\label{eq:step-sizes-pol}
\begin{split}
\alpha_k := \gamma_k\dfrac{(f+g)(x^k)-s^\star}{\norm{u^k+w^k}^2}\qquad\text { and }\qquad \alpha_k := \gamma_k\dfrac{(f+g)(x^k)-s^\star}{\sigma^2c^2/(1-\sigma)^2+\norm{u^k+w^k}^2}
\end{split}
\end{equation}
for Algorithms \ref{alg:1} and \ref{alg:2}, respectively. Hence, as a consequence of Theorems \ref{theo:1} and \ref{theo:2} we can prove convergence to a solution of problem \eqref{eq:problem}.

\begin{corollary}
\label{cor:1}
Let $(x^k)_{k\in\N}$ be the sequence generated by any one of the Algorithms $\ref{alg:1}$-$\ref{alg:2}$ with $\alpha_k$ given by \eqref{eq:step-sizes-pol}, and assume that $S^\star\neq\emptyset$. Then, the following statements hold.
\begin{enumerate}
\item[(i)] $(x^k)_{k\in\N}$ is quasi-Fejér convergent to $S^\star$ (if $(x^k)_{k\in\N}$ is calculated by Algorithm $\ref{alg:2}$, then the sequence is actually Fejér convergent to $S^\star$);
\item[(ii)] $\lim_{k\to\infty}(f+g)(x^k)=s^\star$;
\item[(iii)] $(x^k)_{k\in\N}$ is weakly convergent to some point in $S^\star$.
\end{enumerate}
\end{corollary}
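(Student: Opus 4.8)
The plan is to recognize the stepsize choices in \eqref{eq:step-sizes-pol} as the particular instances of the Polyak-type rules \eqref{eq:pol-step} and \eqref{eq:pol-step1} obtained by freezing the estimate at $s_k \equiv s^\star$ and taking $\epsilon_k \equiv 0$, and then to invoke Theorems \ref{theo:1} and \ref{theo:2} directly. First I would verify that this choice meets the standing assumptions preceding those theorems: the constant sequence $s_k \equiv s^\star$ is (weakly) monotone decreasing and converges to $\tilde{s} := s^\star$, while $\epsilon_k \equiv 0$ gives $\lim_{k\to\infty}\epsilon_k = \epsilon = 0$. The requirement $s_k + \epsilon_k < (f+g)(x^k)$ then reads $s^\star < (f+g)(x^k)$, which holds whenever $x^k \notin S^\star$.

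The key identification is that, with $\tilde{s} = s^\star$, the level set in \eqref{eq:lev-set} collapses onto the solution set. Indeed, since $s^\star = \inf_{x\in C}(f+g)(x)$, the inequality $(f+g)(x) \leq s^\star$ forces equality, so
\begin{equation*}
L_{f+g}(s^\star) = \set{x \in C : (f+g)(x) \leq s^\star} = \set{x \in C : (f+g)(x) = s^\star} = S^\star.
\end{equation*}
The hypothesis $S^\star \neq \emptyset$ therefore supplies exactly the non-emptiness of $L_{f+g}(\tilde{s})$ demanded by Theorems \ref{theo:1} and \ref{theo:2}. With this in hand, applying Theorem \ref{theo:1} to the sequence generated by Algorithm \ref{alg:1} yields item (i) (quasi-Fej\'er convergence to $S^\star$), and, since $\epsilon = 0$, items (ii) and (iii) of that theorem give $\lim_{k\to\infty}(f+g)(x^k) = \tilde{s} + \epsilon = s^\star$ and weak convergence to a point of $L_{f+g}(s^\star) = S^\star$. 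Likewise Theorem \ref{theo:2} handles Algorithm \ref{alg:2}, where the stronger Fej\'er conclusion in part (i) of that theorem produces the parenthetical sharpening in item (i) of the corollary.

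The one point requiring care is the degenerate situation in which the numerator $(f+g)(x^k) - s^\star$ vanishes, i.e. when $x^k \in S^\star$; there the strict inequality $s^\star < (f+g)(x^k)$ underlying the hypotheses of Theorems \ref{theo:1} and \ref{theo:2} fails and the stepsize \eqref{eq:step-sizes-pol} becomes zero. I would dispose of this by observing that reaching such an iterate means the method has already located a solution, so all three conclusions hold trivially from that index onward; otherwise the strict inequality is in force for every $k$ and the two theorems apply verbatim. I expect this bookkeeping about the degenerate/stopping case to be the only genuine obstacle, the remainder being a direct specialization of the two preceding theorems.
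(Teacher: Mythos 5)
Your proposal is correct and follows essentially the same route as the paper, which proves the corollary by specializing Theorems \ref{theo:1} and \ref{theo:2} with $s_k\equiv s^\star$, $\tilde{s}=s^\star$ and $\epsilon_k\equiv0$, the identification $L_{f+g}(s^\star)=S^\star$ being exactly the point that converts the level-set conclusions into statements about the solution set. Your additional remark about the degenerate case $x^k\in S^\star$ (where the stepsize vanishes) is a sensible piece of bookkeeping that the paper leaves implicit.
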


\begin{proof}
The corollary is a direct consequence of Theorems \ref{theo:1} and \ref{theo:2} fixing $s_k\equiv s^\star$, $\tilde{s}=s^\star$ and $\epsilon_k\equiv0$.
\end{proof}

We now establish convergence rates for the functional values for Algorithms \ref{alg:1} and \ref{alg:2}, using Polyak stepsize rules \eqref{eq:pol-step} and \eqref{eq:pol-step1}. It is clear that the number of iterates generated by Algorithm \ref{alg:1} will depend on the $\ell_2$-norm of the sequence $(r_k)_{k\in\N}$.

\begin{theorem}
\label{theo:compl-pol}
Assume that $L_{f+g}(\tilde{s})\neq\emptyset$ and $\epsilon_k\equiv\epsilon$. Then, the following statements hold.
\begin{enumerate}
\item[(i)] Let $(x^k)_{k\in\N}$ be the sequence generated by Algorithm $\ref{alg:1}$ with stepsize given by \eqref{eq:pol-step}. Then, for every $k\in\N$, we have
\begin{equation*}
(f+g)_\text{best}^k-\tilde{s}\leq \dfrac{\dist{x^0}{L_{f+g}(\tilde{s})}2c}{\sqrt{\underline{\gamma}(2-\overline{\gamma})}\sqrt{k+1}} + \dfrac{r^2}{\sqrt{k+1}} + \epsilon,
\end{equation*}
where $r:=\norm{(r_k)_{k\in\N}}_2$.

\item[(ii)] Let $(x^k)_{k\in\N}$ be calculated by Algorithm $\ref{alg:2}$ with $\alpha_k$ defined by \eqref{eq:pol-step1}. Then, for all $k\in\N$, we have
\begin{equation*}
(f+g)_\text{best}^k - \tilde{s} \leq \dfrac{\dist{x^0}{L_{f+g}(\tilde{s})}c\sqrt{C_\sigma}}{\sqrt{\underline{\gamma}(2-\overline{\gamma})}\sqrt{k+1}} + \epsilon.
\end{equation*}
\end{enumerate}
\end{theorem}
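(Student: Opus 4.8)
The plan is to convert the square-summability estimates that already surface inside the proofs of Theorems \ref{theo:1} and \ref{theo:2} into a per-iterate rate, in exactly the manner one handles subgradient methods. Throughout I would set $\tilde{x}:=P_{L_{f+g}(\tilde{s})}(x^0)$, so that $\norm{x^0-\tilde{x}}=\dist{x^0}{L_{f+g}(\tilde{s})}$, and abbreviate $\beta_j:=(f+g)(x^j)-s_j-\epsilon\geq0$ (nonnegativity comes from the standing requirement $s_k+\epsilon_k<(f+g)(x^k)$ together with $\epsilon_k\equiv\epsilon$). The single engine for both items is the elementary inequality $(k+1)\min_{0\leq j\leq k}\beta_j^2\leq\sum_{j=0}^k\beta_j^2$, so the whole task reduces to bounding this sum by a constant multiple of $\norm{x^0-\tilde{x}}^2$ (with an extra $r$-contribution in the case of Algorithm \ref{alg:1}), and then extracting a square root.

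For item (i) I would invoke inequality \eqref{eq:5} from the proof of Theorem \ref{theo:1}, which with the above $\tilde{x}$ reads $\frac{\underline{\gamma}(2-\overline{\gamma})}{4c^2}\sum_{j=0}^k\beta_j^2\leq\norm{x^0-\tilde{x}}^2+\sum_{j=0}^kr_j^2$. Bounding the tail of the $r$-series by $r^2=\norm{(r_k)_{k\in\N}}_2^2$ and applying the min-versus-quadratic-mean inequality yields $\min_{0\leq j\leq k}\beta_j\leq\frac{2c}{\sqrt{\underline{\gamma}(2-\overline{\gamma})}}\cdot\frac{\sqrt{\norm{x^0-\tilde{x}}^2+r^2}}{\sqrt{k+1}}$. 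Subadditivity of the square root, $\sqrt{a+b}\leq\sqrt{a}+\sqrt{b}$, then separates the right-hand side into a distance term of order $\dist{x^0}{L_{f+g}(\tilde{s})}/\sqrt{k+1}$ and an error term governed by $r$, which are precisely the two rate contributions displayed in (i).

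Item (ii) is entirely parallel but cleaner, since Algorithm \ref{alg:2} carries no $r_k$ term. Starting from \eqref{eq:eq4}, assumption A.3 gives $\norm{u^k+w^k}^2\leq4c^2$, so the denominator is bounded by $\frac{\sigma^2}{(1-\sigma)^2}c^2+4c^2=c^2C_\sigma$; telescoping the resulting recursion and discarding the nonnegative left-hand side produces $\frac{\underline{\gamma}(2-\overline{\gamma})}{c^2C_\sigma}\sum_{j=0}^k\beta_j^2\leq\norm{x^0-\tilde{x}}^2$. The same min-versus-quadratic-mean step then gives $\min_{0\leq j\leq k}\beta_j\leq\frac{c\sqrt{C_\sigma}\,\dist{x^0}{L_{f+g}(\tilde{s})}}{\sqrt{\underline{\gamma}(2-\overline{\gamma})}\sqrt{k+1}}$, which is exactly the stated bound once the passage to $(f+g)^k_\text{best}$ is made.

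The one delicate point, and the step I expect to require the most care, is the translation from $\min_{0\leq j\leq k}\beta_j$ to $(f+g)^k_\text{best}-\tilde{s}$. Since $(f+g)^k_\text{best}=\min_{0\leq j\leq k}(f+g)(x^j)$ and $s_j\geq\tilde{s}$ for every $j$, taking the estimates constant, $s_k\equiv\tilde{s}$ (the setting already used in Corollary \ref{cor:1}), produces the clean identity $(f+g)^k_\text{best}-\tilde{s}=\min_{0\leq j\leq k}\beta_j+\epsilon$, after which both displayed inequalities follow at once. For a genuinely varying estimate sequence one would additionally have to absorb the gap $s_j-\tilde{s}$, which is the reason the bounds are phrased relative to the limiting level $\tilde{s}$ and carry the residual additive term $\epsilon$.
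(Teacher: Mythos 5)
Your route is the same as the paper's: take $\tilde{x}=P_{L_{f+g}(\tilde{s})}(x^0)$ in \eqref{eq:5} (respectively, telescope \eqref{eq:eq4} with the denominator bounded by $c^2C_\sigma$ via A.3), pass from the sum of squares to $(k+1)$ times the minimum of the squares, and take square roots. For part (ii), and for part (i) up to the final translation step, your argument coincides with the printed proof.

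The one point where you and the paper diverge is exactly the step you single out as delicate, and you are right to be suspicious of it. The paper passes from $\sum_{j=0}^k((f+g)(x^j)-s_j-\epsilon)^2$ to $\sum_{j=0}^k((f+g)(x^j)-\tilde{s}-\epsilon)^2$ by asserting that the latter is \emph{smaller} ``since $s_j\geq\tilde{s}$''; but $s_j\geq\tilde{s}$ gives $(f+g)(x^j)-s_j-\epsilon\leq(f+g)(x^j)-\tilde{s}-\epsilon$, and both quantities are nonnegative (by the standing requirement $s_j+\epsilon_j<(f+g)(x^j)$), so the inequality between the sums of squares actually runs in the opposite direction. Consequently your proposal is complete precisely in the regime you state it for, $s_k\equiv\tilde{s}$, where $\beta_j=(f+g)(x^j)-\tilde{s}-\epsilon$ and the identity $(f+g)^k_{\text{best}}-\tilde{s}=\min_{0\leq j\leq k}\beta_j+\epsilon$ closes the argument; and the residual gap you acknowledge for a genuinely varying $(s_k)$ is not resolved by the paper's own proof either. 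What \eqref{eq:5} controls is $\min_{0\leq j\leq k}\bigl((f+g)(x^j)-s_j-\epsilon\bigr)$, which bounds $(f+g)^k_{\text{best}}-\tilde{s}-\epsilon$ only after adding a term of order $\max_{0\leq j\leq k}(s_j-\tilde{s})=s_0-\tilde{s}$, which is absent from the stated estimate. So you have not missed an idea the paper supplies; if anything you have located the weak point. One last small discrepancy: your subadditivity step $\sqrt{a+b}\leq\sqrt{a}+\sqrt{b}$ produces the term $r/\sqrt{k+1}$ in part (i), whereas the theorem displays $r^2/\sqrt{k+1}$; your version is what the argument actually yields (the two agree only when $r\geq1$), so this appears to be a typo in the statement rather than an error on your side.
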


\begin{proof}
\item[(i)] From equation \eqref{eq:5} with $\tilde{x}=P_{L_{f+g}(\tilde{s})}(x^0)$ it follows that
\begin{equation*}
\sum_{j=0}^k((f+g)(x^j)-\tilde{s}-\epsilon)^2\leq\sum_{j=0}^k((f+g)(x^j)-s_k-\epsilon)^2 \leq \dfrac{4c^2\dist{x^0}{L_{f+g}(\tilde{s})}^2}{\underline{\gamma}(2-\overline{\gamma})} + r^2,
\end{equation*}
where the first inequality above is due to the assumption that $s_k\geq\tilde{s}$ for all $k\in\N$. Using inequality above and the definition of $(f+g)_\text{best}^k$ we have
\begin{equation*}
(k+1)((f+g)_\text{best}^k-\tilde{s}-\epsilon)^2\leq \dfrac{4c^2\dist{x^0}{L_{f+g}(\tilde{s})}^2}{\underline{\gamma}(2-\overline{\gamma})} + r^2,
\end{equation*}
from which follows the desired estimate.

\item[(ii)] Analysis similar to that in the proof of item (i) holds for this case. Indeed, applying recursively equation \eqref{eq:eq4} with $\tilde{x}=P_{L_{f+g}(\tilde{s})}(x^0)$ and rearranging the terms, we obtain
\begin{equation*}
\underline{\gamma}(2-\overline{\gamma})\sum_{j=0}^k\dfrac{((f+g)(x^j)-s_j-\epsilon)^2}{\sigma^2c^2/(1-\sigma)^2+\norm{u^j+w^j}^2} \leq \dist{x^0}{L_{f+g}(\tilde{x})}^2.
\end{equation*}
Since $\sigma^2c^2(1-\sigma^2)+\norm{u^j+w^j}^2\leq c^2C_\sigma$ and $s_j\geq\tilde{s}$ for all $j\in\N$, we combine these relations with the definition of $(f+g)_\text{best}^k$ and the above equation to conclude the proof.
\end{proof}

It may seem that the complexity bounds obtained in the above theorem are worse than those derived in Lemma \ref{lem:compl-const} for constant stepsizes, but this is not the case. For instance, if we analyze the complexity for Algorithm \ref{alg:2} with constant stepsize $\alpha$, according to Lemma \ref{lem:compl-const}(ii), if $\epsilon=0$, to achieve a cost function value within $\mathcal{O}(\delta)$ of the optimal, $\alpha$ must be of order $\mathcal{O}(\delta)$. Therefore, the number of necessary iterations needed to guarantee this level of optimality is of order $\mathcal{O}(1/\delta^2)$,
which is the same type of estimate as for Algorithm \ref{alg:2} obtained in Theorem \ref{theo:compl-pol}(ii).

We finish this subsection by analyzing the convergence of Algorithms \ref{alg:1} and \ref{alg:2} using rules \eqref{eq:pol-step} and \eqref{eq:pol-step1}, for the case where the sequence of estimates $(s_k)_{k\in\N}$ satisfies $\lim_{k\to\infty}s_k=\tilde{s}<s^\star$.

\begin{theorem}
Let $(x^k)_{k\in\N}$ be the sequence generated by  Algorithm $\ref{alg:1}$ or $\ref{alg:2}$ with $\alpha_k$ given by \eqref{eq:pol-step} or \eqref{eq:pol-step1}, respectively. Assume that $S^\star\neq\emptyset$, $\epsilon_k\equiv0$ and $\tilde{s}<s^\star$. Then, we have
\begin{equation*}
\lim_{k\to \infty}(f+g)_\text{best}^k \leq s^\star + \dfrac{\overline{\gamma}}{2-\overline{\gamma}}(s^\star-\tilde{s}).
\end{equation*}
\end{theorem}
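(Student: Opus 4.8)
The plan is to argue by contradiction, using the monotonicity of $(f+g)^k_\text{best}$ to reduce everything to a single telescoping distance estimate. Since $S^\star\neq\emptyset$, I fix $x^\star\in S^\star$, so that $(f+g)(x^\star)=s^\star$ and every iterate satisfies $(f+g)(x^k)\geq s^\star$; hence $(f+g)^k_\text{best}$ is nonincreasing and bounded below, and its limit $L$ exists. Suppose, for contradiction, that $L>s^\star+\frac{\overline{\gamma}}{2-\overline{\gamma}}(s^\star-\tilde{s})$, and set $2\eta:=L-s^\star-\frac{\overline{\gamma}}{2-\overline{\gamma}}(s^\star-\tilde{s})>0$. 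Because $(f+g)^k_\text{best}$ decreases to $L$ and $(f+g)(x^k)\geq(f+g)^k_\text{best}\geq L$, we obtain $(f+g)(x^k)\geq s^\star+\frac{\overline{\gamma}}{2-\overline{\gamma}}(s^\star-\tilde{s})+2\eta$ for every $k$.

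First I would feed $x=x^\star$ into Lemma \ref{lem:2} (for Algorithm \ref{alg:1}) and Lemma \ref{lem:1} (for Algorithm \ref{alg:2}), using $\epsilon_k\equiv0$, and abbreviate $D_k:=(f+g)(x^k)-s_k>0$. Substituting the Polyak stepsizes \eqref{eq:pol-step} and \eqref{eq:pol-step1} and writing $s^\star-(f+g)(x^k)=(s^\star-s_k)-D_k$, both recursions collapse to the same form
\begin{equation*}
\norm{x^{k+1}-x^\star}^2\leq\norm{x^k-x^\star}^2+\frac{\gamma_kD_k}{Q_k}\big[2(s^\star-s_k)-(2-\gamma_k)D_k\big]+r_k^2,
\end{equation*}
where $Q_k=\norm{u^k+w^k}^2$ for Algorithm \ref{alg:1} (keeping its genuine error $r_k^2$), while $Q_k=\frac{\sigma^2c^2}{(1-\sigma)^2}+\norm{u^k+w^k}^2$ and $r_k\equiv0$ for Algorithm \ref{alg:2}; in the latter case I would use A.3 (namely $\norm{\overline{w}^k}\leq c$) to bound the extra quadratic term of Lemma \ref{lem:1} by $\alpha_k^2Q_k$. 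The crux is then to show the bracket is uniformly negative. Using $\gamma_k\leq\overline{\gamma}$ and $D_k>0$ to replace $(2-\gamma_k)$ by $(2-\overline{\gamma})$, then inserting the lower bound on $(f+g)(x^k)$ and the inequality $s_k\geq\tilde{s}$, a short computation yields $(2-\gamma_k)D_k-2(s^\star-s_k)\geq\overline{\gamma}(s_k-\tilde{s})+2(2-\overline{\gamma})\eta\geq2(2-\overline{\gamma})\eta>0$, so the bracket is $\leq-2(2-\overline{\gamma})\eta$.

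The remaining step is routine but is where I would be most careful. Since $s_k\to\tilde{s}$ and $(f+g)(x^k)$ is bounded below away from $\tilde{s}$, the factor $\gamma_kD_k$ is bounded below by a positive constant for all large $k$ (via $\gamma_k\geq\underline{\gamma}$ and $D_k\geq d_{\min}>0$), while $Q_k$ is bounded above by $4c^2$ (resp. $c^2C_\sigma$) through A.3. Hence there are $\theta>0$ and an index $k_1$ with $\norm{x^{k+1}-x^\star}^2\leq\norm{x^k-x^\star}^2-\theta+r_k^2$ for all $k\geq k_1$; telescoping from $k_1$ and invoking $(r_k)_{k\in\N}\in\ell_2(\R)$ forces $\norm{x^{k+1}-x^\star}^2\to-\infty$, which is absurd. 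The main obstacle is purely algebraic: verifying that the threshold $\frac{\overline{\gamma}}{2-\overline{\gamma}}(s^\star-\tilde{s})$ is exactly the value at which the bracket changes sign — at the boundary it vanishes (which one checks by setting $\gamma_k=\overline{\gamma}$ and $s_k=\tilde{s}$, giving $D_k=\frac{2}{2-\overline{\gamma}}(s^\star-\tilde{s})$ and bracket $=0$), so the strict gap $\eta$ is genuinely needed to extract a per-step decrease bounded away from zero.
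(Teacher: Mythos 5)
Your proof is correct, but it takes a genuinely different route from the paper's. The paper rewrites the stepsize as $\alpha_k=\overline{\gamma}_k\,\frac{(f+g)(x^k)-s^\star}{\norm{u^k+w^k}^2}$ with the effective relaxation parameter $\overline{\gamma}_k:=\gamma_k\frac{(f+g)(x^k)-s_k}{(f+g)(x^k)-s^\star}$, and argues that $\overline{\gamma}_k$ must come arbitrarily close to $2$: if $\overline{\gamma}_k\leq\gamma<2$ for all $k$, Corollary \ref{cor:1}(ii) would give $(f+g)(x^k)\to s^\star$, forcing $\overline{\gamma}_k\to\infty$ because $s_k\to\tilde{s}<s^\star$, a contradiction. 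The inequality $\overline{\gamma}_{\overline{k}}>2-\epsilon$ at a single index $\overline{k}$ is then unwound into the bound $(f+g)(x^{\overline{k}})<s^\star+\frac{\overline{\gamma}}{2-\epsilon-\overline{\gamma}}(s^\star-\tilde{s})$, and $\epsilon\to0$ finishes via the definition of $(f+g)_\text{best}^k$. You instead run a direct contradiction: assuming the limit exceeds the threshold by $2\eta$, you substitute the Polyak step into Lemmas \ref{lem:2} and \ref{lem:1} and verify (your algebra checks out, including the boundary case where the bracket vanishes) that the bracket is at most $-2(2-\overline{\gamma})\eta$, that $\gamma_kD_k/Q_k$ is bounded below for large $k$ via A.3 and $s_k\to\tilde{s}$, and telescope against the square-summability of $(r_k)_{k\in\N}$. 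Your argument is self-contained modulo the two key lemmas and A.3 — it does not route through Corollary \ref{cor:1} and hence through Theorems \ref{theo:1}--\ref{theo:2} — and it is quantitative: it exhibits a per-iteration decrease of $\norm{x^k-x^\star}^2$ bounded away from zero while the iterates remain above the threshold, which could be converted into a complexity estimate. The paper's argument is shorter given the machinery already in place, but is purely asymptotic and nonconstructive about which index achieves the bound.
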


\begin{proof}
We suppose first that $(x^k)_{k\in\N}$ is calculated by Algorithm \ref{alg:1} with $\alpha_k$ defined by \eqref{eq:pol-step}. We note that $\alpha_k$ can be rewritten as
\begin{equation}
\label{eq:0}
\alpha_k = \overline{\gamma}_k\dfrac{(f+g)(x^k)-s^\star}{\norm{u^k+w^k}^2},\quad\qquad\text{where }\, \, \overline{\gamma}_k := \gamma_k\dfrac{(f+g)(x^k)-s_k}{(f+g)(x^k)-s^\star}.
\end{equation}
Now, we observe that since $\lim_{k\to\infty}s_k=\tilde{s}<s^\star$, we have $(f+g)(x^k)-s_k\geq(f+g)(x^k)-s^\star$ for all $k$ sufficiently large. Therefore, this last inequality, combined with the assumption that $\gamma_k\geq\underline{\gamma}$, implies that $\overline{\gamma}_k\geq\underline{\gamma}$ for all $k$ large enough.

Next, we claim that for all $\gamma<2$ there exists $k(\gamma)\in\N$ such that $\overline{\gamma}_{k(\gamma)}>\gamma$. Indeed, if we suppose by contradiction that for some $\gamma<2$ it holds that $\overline{\gamma}_k\leq\gamma$ for all $k\in\N$, then by the first identity in \eqref{eq:0} and Corollary \ref{cor:1}(ii), we have  $\lim_{k\to\infty}(f+g)(x^k)=s^\star$. This relation, together with the assumption that $\lim_{k\to\infty}s_k=\tilde{s}<s^\star$, now implies that $\overline{\gamma}_k\to\infty$ when $k\to\infty$, arising to a contradiction.

Hence, for all $\epsilon>0$ we have that there exists $\overline{k}\in \N$ such that
\begin{equation*}
\overline{\gamma}_{\overline{k}}
=\gamma_{\overline{k}}\dfrac{(f+g)(x^{\overline{k}})-s_{\overline{k}}}{(f+g)(x^{\overline{k}})-s^\star} > 2- \epsilon,
\end{equation*}
and consequently
\begin{equation*}
\begin{split}
(f+g)(x^{\overline{k}}) < & \, s^\star + \dfrac{\gamma_{\overline{k}}}{2-\epsilon-\gamma_{\overline{k}}}(s^\star-s_{\overline{k}})\\
\leq &\, s^\star + \dfrac{\overline{\gamma}}{2-\epsilon-\overline{\gamma}}(s^\star-\tilde{s}),
\end{split}
\end{equation*}
where the second inequality above follows using that $s_{\overline{k}}\geq\tilde{s}$ and $\gamma_{\overline{k}}\leq\overline{\gamma}$. Since equation above holds for arbitrary $\epsilon>0$, by the definition of $(f+g)_\text{best}^k$ we obtain the result for Algorithm \ref{alg:1}.

Now, if $(x^k)_{k\in\N}$ is the sequence generated by Algorithm \ref{alg:2} with $\alpha_k$ given by \eqref{eq:pol-step1}, then we can rewrite the stepsize $\alpha_k$ as
\begin{equation*}
\alpha_k = \overline{\gamma}_k\dfrac{(f+g)(x^k)-s^\star}{\sigma^2c^2/(1-\sigma)^2+\norm{u^k+w^k}^2},\quad\qquad\text{where }\, \, \overline{\gamma}_k := \gamma_k\dfrac{(f+g)(x^k)-s_k}{(f+g)(x^k)-s^\star}.
\end{equation*}
The rest of the proof runs as before.
\end{proof}

\section{Inexact accelerated PGMs}
\label{sec:acel}

Our goal in this section is to analyze proximal based algorithms for solving problem \eqref{eq:problem} that improve the convergence rate of the methods presented in sections \ref{asbt} and \ref{relat}, while maintaining the inexact calculations of the proximal operators.  
As discussed in section \ref{sec:int}, for this purpose we have to require that one of the functions in \eqref{eq:problem} be differentiable.
Specifically, in the sequel we assume that $\HH=\R^n$, $C=\R^n$ and we also suppose that $f$ has full domain (i.e., $\dom f=\R^n$), it is a differentiable function and $\nabla f$ is Lipschitz continuous, i.e. there is $L > 0$ such that
\begin{equation*}
\norm{\nabla f(x)-\nabla f(y)} \leq L\norm{x-y}
\end{equation*}
for all $x,y\in \dom f$. 

In this section we will discuss inexact accelerated versions of the proximal gradient method (PGM). 
Inexact accelerated PGMs were proposed in \cite{schmidt2011} and \cite{Villa2013}. These works use 
different concepts of approximation of the proximal operator. In \cite{Villa2013}, the authors employ a 
notion of inexactness based on the $\epsilon$-subdifferential, which is indeed a relaxation of the optimality 
condition of the minimization problem in \eqref{eq:prox}. Work \cite{schmidt2011} uses an approximation criterion based on 
an error in the calculation of the proximal objective function. 
To guarantee that these accelerated PGMs achieve the optimal $1/k^2$ convergence rate for the functional values, the errors have to satisfy a sufficiently fast decay condition (see \cite{schmidt2011} and \cite{Villa2013} for more details).

Here we will consider the notions of inexactness presented in Definitions \ref{def:r-approx} and \ref{def:approx-sol2}, in order to approximate the resolvent operators.
For accelerating the PGM using the approximation criterion of Definition \ref{def:r-approx}, it can be used the 
accelerated framework presented in \cite{schmidt2011}. This is due to a relation, which will be proven below, 
that can be established between Definition \ref{def:r-approx} and the concept of inexactness considered in 
\cite{schmidt2011}. 

We now briefly discuss the inexact accelerated framework presented in \cite{schmidt2011} and how to use it to accelerate the PGM when the resolvent operators are inexactly calculated via Definition \ref{def:r-approx}.

Fixing $y\in\R^n$ and $\alpha>0$, a point $x'$ is an $e$-optimal solution of $\textbf{prox}_{\alpha g}(y)$ \cite{schmidt2011}, and we write $x'\approx_{e}\textbf{prox}_{\alpha g}(y)$, if 
\begin{equation}
\label{eq:aprox}
\dfrac{1}{2\alpha} \norm{x'-y}^2 + g(x') \leq e + \min_{x\in\R^n}\{\dfrac{1}{2\alpha}\norm{x-y}^2+g(x)\}.
\end{equation}
The algorithm presented in \cite{schmidt2011} generates sequences $(x^k)_{k\in\N}$ and $(y^k)_{k\in\N}$ by the recursion 
\begin{equation}
\label{eq:accel-1}
\begin{split}
& x^k \approx_{e_k} \textbf{prox}_{\alpha g}(y^{k-1}-\alpha\nabla f(y^{k-1}))\\
& y^k = x^k + \beta_k(x^k-x^{k-1}),
\end{split}
\end{equation}
where the stepsize $\alpha=1/L$ is fixed and $(\beta_k)_{k\in\N}$ is the sequence defined by $\beta_k=k-1/k-2$, for all $k$. In order to this method achieve the optimal $\mathcal{O}(1/k^2)$ rate, it is necessary  that the sequence $(k\sqrt{e_k})_{k\in\N}$ be summable (see \cite[Proposition 2]{schmidt2011}).

The following lemma gives a link between approximations in the sense of Definition \ref{def:r-approx} with those in the sense of \eqref{eq:aprox}.

\begin{lemma}
Given $y\in\R^n$, $\alpha>0$ and $r>0$. If $x'$ is an $r$-approximate solution of $\textbf{prox}_{\alpha g}$ at $y$, then $x'\approx_e\textbf{prox}_{\alpha g}(y)$, where $e=(1/2\alpha)r$.
\end{lemma}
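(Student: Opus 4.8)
The plan is to control the proximal objective gap directly. Write $\psi(x) := \frac{1}{2\alpha}\norm{x-y}^2 + g(x)$, so that $\overline{x} := \textbf{prox}_{\alpha g}(y)$ is precisely the minimizer of $\psi$ and the right-hand side of \eqref{eq:aprox} equals $e + \psi(\overline{x})$. Thus the claim reduces to proving the estimate $\psi(x') - \psi(\overline{x}) \le e$. By hypothesis there is $v \in \partial g(x')$ with $\norm{\alpha v + x' - y} \le r$, and this is the only data I have about $x'$.

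First I would expand the gap as
\[
\psi(x') - \psi(\overline{x}) = \frac{1}{2\alpha}\left(\norm{x'-y}^2 - \norm{\overline{x}-y}^2\right) + g(x') - g(\overline{x}).
\]
For the $g$-term I use the subgradient inequality $g(\overline{x}) \ge g(x') + \inpr{v}{\overline{x}-x'}$, which gives $g(x')-g(\overline{x}) \le \inpr{v}{x'-\overline{x}}$. For the quadratic term I use the identity $\norm{x'-y}^2 - \norm{\overline{x}-y}^2 = 2\inpr{x'-\overline{x}}{x'-y} - \norm{x'-\overline{x}}^2$. Collecting terms, the two inner products combine into $\frac{1}{\alpha}\inpr{\alpha v + x'-y}{x'-\overline{x}}$, so that
\[
\psi(x') - \psi(\overline{x}) \le \frac{1}{\alpha}\inpr{\alpha v + x'-y}{x'-\overline{x}} - \frac{1}{2\alpha}\norm{x'-\overline{x}}^2.
\]

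Next I would bound the inner product by Cauchy--Schwarz using $\norm{\alpha v + x'-y} \le r$, and invoke the remark following Definition \ref{def:r-approx} (inequality \eqref{eq:nova}), which guarantees $\norm{x'-\overline{x}} \le r$. Setting $t := \norm{x'-\overline{x}} \in [0,r]$ reduces everything to the scalar bound $\psi(x')-\psi(\overline{x}) \le \frac{1}{\alpha}\big(rt - \tfrac{1}{2}t^2\big)$, whose maximum over $[0,r]$ is attained at $t=r$ and equals $\frac{r^2}{2\alpha}$. This establishes \eqref{eq:aprox} with $e = \frac{r^2}{2\alpha}$.

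The computation is routine; the only genuine point of care is retaining the negative quadratic term $-\frac{1}{2\alpha}\norm{x'-\overline{x}}^2$ produced by the expansion (equivalently, exploiting the strong convexity of $\psi$), since discarding it would only yield the weaker constant $r^2/\alpha$ rather than $r^2/(2\alpha)$. I also note that my derivation produces $e = r^2/(2\alpha)$; the appearance of $r$ (rather than $r^2$) in the stated value seems to be a typographical slip, the dimensionally consistent and correct constant being $\frac{r^2}{2\alpha}$.
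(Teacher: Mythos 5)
Your proof is correct, and it reaches the same (correct) constant $e=r^2/(2\alpha)$ that the paper's own computation actually yields; you are right that the stated value $(1/2\alpha)r$ is a slip, since the argument in the paper bounds the bracketed term $\norm{x'-z}^2+2\alpha\inpr{w-v}{x'-z}=\norm{\alpha w+x'-y}^2-\norm{\alpha(w-v)}^2$ by $r^2$, not by $r$. The two arguments share the same starting point --- the subgradient inequality for $g$ at $x'$ tested against the exact prox, plus an expansion of $\norm{x'-y}^2-\norm{\overline{x}-y}^2$ --- but they close differently. The paper substitutes the first-order optimality condition $\overline{x}+\alpha v=y$ (with $v\in\partial g(\overline{x})$) into the cross term and then invokes the monotonicity identity \eqref{eq:nova} to absorb everything into $\norm{\alpha w+x'-y}^2$. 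You instead package the residual as $\frac{1}{\alpha}\inpr{\alpha v+x'-y}{x'-\overline{x}}-\frac{1}{2\alpha}\norm{x'-\overline{x}}^2$ and finish with Cauchy--Schwarz and a one-variable quadratic maximization; this never uses the optimality condition of the exact prox or the monotonicity of $\partial g$. A small further simplification: you do not need \eqref{eq:nova} to restrict $t=\norm{x'-\overline{x}}$ to $[0,r]$, since $rt-\tfrac12 t^2=\tfrac12 r^2-\tfrac12(t-r)^2\le\tfrac12 r^2$ for every $t\ge0$, so your argument is in fact entirely self-contained. Your emphasis on retaining the negative quadratic term is well placed --- it is exactly what recovers the factor $1/2$, and it mirrors the role of the $-\norm{\alpha(w-v)}^2$ term discarded in the paper's version.
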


\begin{proof}
Since $x'$ is an $r$-approximate solution of $\textbf{prox}_{\alpha g}$ at $y$, there exists $w\in\partial g(x')$ such that $\norm{\alpha w + x' - y}\leq r$. From the definition of the subdifferential we have that
\begin{equation*}
g(x') + \dfrac{1}{2\alpha}\norm{x'-y}^2 \leq g(z) + \inpr{w}{x'-z} + \dfrac{1}{2\alpha}\norm{x'-y}^2,
\end{equation*}
where $z := \textbf{prox}_{\alpha g}(y)$. Now, adding and subtracting $(1/2\alpha)\norm{z-y}^2$ on the right-hand side of the inequality above, we obtain
\begin{equation*}
\begin{split}
g(x') + \dfrac{1}{2\alpha}\norm{x'-y}^2 \leq g(z) + \dfrac{1}{2\alpha}\norm{z-y}^2 + \dfrac{1}{2\alpha}\left[2\alpha\inpr{w}{x'-z} + \norm{x'-y}^2 - \norm{z-y}^2\right].
\end{split}
\end{equation*}
Form the definition of $\textbf{prox}_{\alpha g}$ in \eqref{eq:prox} and noting that it does not change if we multiply by $1/\alpha$, we have that 
\begin{equation*}
g(z) + \dfrac{1}{2\alpha}\norm{z-y}^2 = \min_{x\in\R^n} \{\dfrac{1}{2\alpha}\norm{x-y}^2+g(x)\}.
\end{equation*}
Next, we observe that 
\begin{equation*}
\begin{split}
\norm{x'-y}^2 - \norm{z-y}^2 = & \norm{x-z}^2 + 2\inpr{z-y}{x'-z}\\
= & \norm{x-z}^2 - 2\alpha\inpr{v}{x'-z},
\end{split}
\end{equation*}
where $v\in\partial g(z)$ is such that $z+\alpha v = y$. Combining the three equations above we obtain the desired estimate
\begin{equation*}
\begin{split}
g(x') + \dfrac{1}{2\alpha}\norm{x'-y}^2 \leq & \min_{x\in\R^n} \{\dfrac{1}{2\alpha}\norm{x-y}^2+g(x)\} + \dfrac{1}{2\alpha}\left[\norm{x-z}^2 + 2\alpha\inpr{w-v}{x'-z}\right]\\
\leq & \min_{x\in\R^n} \{\dfrac{1}{2\alpha}\norm{x-y}^2+g(x)\} + \dfrac{1}{2\alpha}r,
\end{split}
\end{equation*}
where the second inequality above follows from \eqref{eq:nova}.
\end{proof}

According to the above lemma, if we replace in \eqref{eq:accel-1} the condition that $x^k$ is an $e_k$-optimal solution by $x^k$ is an $r_k$-approximate solution, we have an inexact accelerated PGM that uses the approximation criterion in Definition \ref{def:r-approx} to evaluate the proximal operators inexactly. By the 
convergence analysis of \cite{schmidt2011}, this inexact accelerated PGM achieves the convergence rate of $\mathcal{O}(1/k^2)$, provided that the sequence $(k\sqrt{r_k})_{k\in\N}$ is summable.

We now focus on accelerating the PGM using a relative error criterion for approximating the resolvent operator. We observe that we were not able to establish the convergence of an inexact accelerated PGM with the criterion in Definition \ref{def:approx-sol}. Instead, we will consider the notion of inexactness presented in Definition \ref{def:approx-sol2}.
Algorithm \ref{alg:4} below is based on the ideas presented in \cite{MonSvaAHPE}, where inexact accelerated hybrid extragradient proximal methods were proposed.

\vspace{5mm}

\fbox{\parbox{14.3cm}{
\begin{algorithm}\label{alg:4}
Choose $x^0,\,\overline{x}^0\in\R^n$ and $\sigma^2\in\left[0,1/2\right[$. Set $t_0=0$ and $\alpha=\sigma^2/L$. Then, for all $k=1,2,\dots$
\begin{enumerate}
\item[1.] Define
\begin{equation*}
\begin{split}
& \beta_k = \dfrac{\alpha(1-\sigma^2)+\sqrt{\alpha^2(1-\sigma^2)^2+4\alpha(1-\sigma^2)t_{k-1}}}{2}\\
& t_k = t_{k-1} + \beta_k\\
& \tilde{x}^k = \dfrac{t_{k-1}}{t_{k}}\overline{x}^{k-1} + \dfrac{\beta_k}{t_{k}}x^{k-1}
\end{split}
\end{equation*}
and set $y^k = \tilde{x}^k-\alpha\nabla f(\tilde{x}^k)$.

\item[2.] Compute a triplet $(\overline{x}^k,\overline{w}^k,\overline{\epsilon}_k)\in\R^n\times\R^n\times\R_+$ such that 
\begin{equation}
\label{eq:err-accel}
\left\lbrace
\begin{array}{l}
\overline{w}^k\in\partial_{\overline{\epsilon}_k}g(\overline{x}^k),\\
\norm{\alpha\overline{w}^k+\overline{x}^k-y^k}^2 + 2\alpha\overline{\epsilon}_k \leq \sigma^2(\norm{\overline{x}^k-\tilde{x}^k}^2 + \norm{\alpha(\overline{w}^k+\nabla f(\tilde{x}^k))}^2).
\end{array}
\right.
\end{equation}

\item[3.] Set
\begin{equation*}
\begin{split}
& x^k = x^{k-1} - \beta_k(\nabla f(\tilde{x}^k)+\overline{w}^k).
\end{split}
\end{equation*}
\end{enumerate}
\end{algorithm}}}

\vspace{4mm}

We make several remarks on Algorithm \ref{alg:4}. 
First, we note that the maximum tolerance for the relative error in the resolution of \eqref{eq:err-accel} is 1/2, instead of 1 as in Definition \ref{def:approx-sol2}. 
Second, we observe that if for all $k\geq1$ we define $\epsilon_k=f(\overline{x}^k)-f(\tilde{x}^k)-\inpr{\nabla f(\tilde{x}^k)}{\overline{x}^k-\tilde{x}^k}$, then by Proposition \ref{prop:sub-properties}(ii) we have that $\nabla f(\tilde{x}^k)\in\partial_{\epsilon_k} f(\overline{x}^k)$, and since $\nabla f$ is $L$-Lipschitz, it also holds that
\begin{equation}
\label{eq:12}
\epsilon_k \leq \dfrac{L}{2}\norm{\tilde{x}^k-\overline{x}^k}^2
\end{equation}
(see for instance \cite{Nest2004}). 

Third, from Proposition \ref{prop:sub-properties}(i) and the previous observations it follows that $\overline{w}^k + \nabla f(\tilde{x}^k)\in\partial_{\overline{\epsilon}_k+\epsilon_k}(f+g)(\overline{x}^k)$. Therefore, by step 2 of Algorithm \ref{alg:4} and rearranging the terms in the left-hand side of the inequality in \eqref{eq:err-accel}, we have that the triplet $(\overline{x}^k,\overline{w}^k+\nabla f(\tilde{x}^k),\overline{\epsilon}_k+\epsilon_k)$ satisfy the Definition \ref{def:approx-sol2} of approximate solution of \eqref{eq:proximal-system} (with $f+g$) at $(\alpha,\tilde{x}^k)$.

Fourth, if we define the \emph{linear approximation} of $f+g$ at $\overline{x}^k$, for all $k\geq1$, i.e. the function $\ell_k:\R^n\to\left]-\infty,\infty\right]$
\begin{equation*}
\ell_k(x) := (f+g)(\overline{x}^k) + \inpr{\overline{w}^k+\nabla f(\tilde{x}^k)}{x-\overline{x}^k} - \overline{\epsilon}_k - \epsilon_k\qquad \forall x\in\R^n,
\end{equation*}
and the affine functions $\Ell_0\equiv0$ and $\Ell_k := \dfrac{1}{t_k}\sum\limits_{i=1}^k\beta_i\ell_i$ for all $k\geq1$.
Then, for all $k\geq0$, we have
\begin{equation}
\label{eq:n1}
\begin{split}
\ell_{k+1}\leq(f+g),\qquad t_k\Ell_k\leq t_k(f+g)\quad \text{and} \quad x^k=\arg\min_{x\in\R^{n}}\{t_k\Ell_k(x)+1/2\norm{x-x^0}^2\}.
\end{split}
\end{equation}

In order to establish the convergence rate of Algorithm \ref{alg:4} we need the following technical result, which is in the spirit of \cite[Lemma 3.3]{MonSvaAHPE}. For the sake of completeness, we include its proof here.

\begin{lemma}
For all $x\in\R^n$ and $k\geq1$ the following inequality holds  
\begin{equation}
\label{eq:7}
\ell_k(x) + \dfrac{1}{2\alpha(1-\sigma^2)}\norm{x-\tilde{x}^k}^2\geq (f+g)(\overline{x}^k) + \dfrac{1-2\sigma^2}{2\alpha}\norm{\overline{x}^k-\tilde{x}^k}^2.
\end{equation}
\end{lemma}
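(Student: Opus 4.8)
The plan is to reduce the claim to a pure inequality in the error parameters and then invoke the error criterion \eqref{eq:err-accel} together with the smoothness bound \eqref{eq:12}. First I would substitute the definition of $\ell_k$ into the left-hand side of \eqref{eq:7} and cancel the common term $(f+g)(\overline{x}^k)$ from both sides. Writing $a^k := \overline{w}^k + \nabla f(\tilde{x}^k)$ for brevity, the inequality \eqref{eq:7} becomes equivalent to showing, for every $x\in\R^n$,
\begin{equation*}
\inpr{a^k}{x-\overline{x}^k} + \frac{1}{2\alpha(1-\sigma^2)}\norm{x-\tilde{x}^k}^2 \geq \overline{\epsilon}_k + \epsilon_k + \frac{1-2\sigma^2}{2\alpha}\norm{\overline{x}^k-\tilde{x}^k}^2.
\end{equation*}

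Next I would eliminate the free variable $x$. Splitting $x-\overline{x}^k = (x-\tilde{x}^k)+(\tilde{x}^k-\overline{x}^k)$ and using the elementary completion-of-squares bound $\inpr{a^k}{u} + \tfrac{1}{2c}\norm{u}^2 \geq -\tfrac{c}{2}\norm{a^k}^2$ (with $c=\alpha(1-\sigma^2)$ and $u=x-\tilde{x}^k$, which is just $\tfrac{1}{2c}\norm{u+ca^k}^2\geq0$), the left-hand side is minimized in $x$, and it suffices to prove
\begin{equation*}
\inpr{a^k}{\tilde{x}^k-\overline{x}^k} - \frac{\alpha(1-\sigma^2)}{2}\norm{a^k}^2 \geq \overline{\epsilon}_k + \epsilon_k + \frac{1-2\sigma^2}{2\alpha}\norm{\overline{x}^k-\tilde{x}^k}^2.
\end{equation*}

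The crucial step is to extract a lower bound on $\inpr{a^k}{\tilde{x}^k - \overline{x}^k}$ from the error criterion. Since $y^k = \tilde{x}^k - \alpha\nabla f(\tilde{x}^k)$, I would rewrite $\alpha\overline{w}^k + \overline{x}^k - y^k = \alpha a^k + (\overline{x}^k - \tilde{x}^k)$; expanding the squared norm in \eqref{eq:err-accel} and rearranging then yields
\begin{equation*}
\inpr{a^k}{\tilde{x}^k-\overline{x}^k} \geq \frac{1-\sigma^2}{2\alpha}\norm{\overline{x}^k-\tilde{x}^k}^2 + \frac{\alpha(1-\sigma^2)}{2}\norm{a^k}^2 + \overline{\epsilon}_k.
\end{equation*}
Substituting this into the reduced inequality, the terms in $\norm{a^k}^2$ and the $\overline{\epsilon}_k$ contributions cancel exactly, and the whole bound collapses to $\tfrac{\sigma^2}{2\alpha}\norm{\overline{x}^k-\tilde{x}^k}^2 \geq \epsilon_k$.

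I would then close the argument by invoking \eqref{eq:12} and the defining relation $\alpha=\sigma^2/L$, which gives $\tfrac{L}{2}=\tfrac{\sigma^2}{2\alpha}$ and hence $\epsilon_k \leq \tfrac{L}{2}\norm{\tilde{x}^k-\overline{x}^k}^2 = \tfrac{\sigma^2}{2\alpha}\norm{\tilde{x}^k-\overline{x}^k}^2$, which is precisely the needed inequality. The main obstacle is the bookkeeping in the third step: one must correctly express $\alpha\overline{w}^k+\overline{x}^k-y^k$ in terms of $a^k$ and track the coefficients so that the $\norm{a^k}^2$ contributions annihilate. It is exactly the design choice $\alpha=\sigma^2/L$ (together with the tighter tolerance $\sigma^2<1/2$, which makes the coefficient $1-2\sigma^2$ positive) that aligns the leftover quadratic residual with the $\epsilon_k$ estimate and makes the argument close.
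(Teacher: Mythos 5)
Your proof is correct and follows essentially the same route as the paper's: minimizing the quadratic in $x$ (your completion of squares is the same computation as the paper's first-order optimality condition, yielding the minimizer $\tilde{x}^k-\alpha(1-\sigma^2)(\overline{w}^k+\nabla f(\tilde{x}^k))$), then expanding $\alpha\overline{w}^k+\overline{x}^k-y^k=\alpha(\overline{w}^k+\nabla f(\tilde{x}^k))+(\overline{x}^k-\tilde{x}^k)$ to exploit the error criterion \eqref{eq:err-accel}, and closing with \eqref{eq:12} and $\alpha=\sigma^2/L$. The only difference is presentational bookkeeping, so nothing further is needed.
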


\begin{proof}
We consider the following minimization problem 
\begin{equation}
\label{eq:10}
\min_{x\in\R^n} \{\ell_k(x) + \dfrac{1}{2\alpha(1-\sigma^2)}\norm{x-\tilde{x}^k}^2\},
\end{equation}
and observe that the first-order optimality condition for this convex optimization problem yields that
its minimizer $x'$ satisfies 
\begin{equation*}
0 =  \overline{w}^k+\nabla f(\tilde{x}^k) + \dfrac{1}{\alpha(1-\sigma^2)} (x' - \tilde{x}^k).
\end{equation*}
Therefore, the solution of \eqref{eq:10} is $x' = \tilde{x}^k-\alpha(1-\sigma^2)(\overline{w}^k+\nabla f(\tilde{x}^k))$. Substituting $x'$ into the function in the minimization problem \eqref{eq:10} we have, for all $x\in\R^n$, that
\begin{equation*}
\begin{split}
\ell_k(x) + \dfrac{1}{2\alpha(1-\sigma^2)}\norm{x-\tilde{x}^k}^2 \geq & (f+g)(\overline{x}^k) + \inpr{\overline{w}^k+\nabla f(\tilde{x}^k)}{\tilde{x}^k-\alpha(1-\sigma^2)(\overline{w}^k+\nabla f(\tilde{x}^k))-\overline{x}^k}\\
& -\overline{\epsilon}_k - \epsilon_k + \dfrac{1-\sigma^2}{2\alpha}\norm{\alpha(\overline{w}^k+\nabla f(\tilde{x}^k))}^2\\
= & (f+g)(\overline{x}^k) + \inpr{\overline{w}^k+\nabla f(\tilde{x}^k)}{\tilde{x}^k-\overline{x}^k} - \overline{\epsilon}_k - \epsilon_k\\
& - \dfrac{1-\sigma^2}{2\alpha}\norm{\alpha(\overline{w}^k+\nabla f(\tilde{x}^k))}^2.
\end{split}
\end{equation*}

Now, we note that 
\begin{equation*}
\begin{split}
\inpr{\overline{w}^k+\nabla f(\tilde{x}^k)}{\tilde{x}^k-\overline{x}^k} - \overline{\epsilon}_k - \epsilon_k = & \dfrac{1}{2\alpha}\norm{\overline{x}^k-\tilde{x}^k}^2 + \dfrac{1}{2\alpha}\norm{\overline{w}^k+\nabla f(\tilde{x}^k)}^2\\
&  - \dfrac{1}{2\alpha}\left[\norm{\overline{w}^k+\overline{x}^k-y^k}^2 + 2\alpha\overline{\epsilon}_k + 2\alpha\epsilon_k\right]\\
\geq & \dfrac{1}{2\alpha}\norm{\overline{x}^k-\tilde{x}^k}^2 + \dfrac{1}{2\alpha}\norm{\overline{w}^k+\nabla f(\tilde{x}^k)}^2\\
& - \dfrac{\sigma^2}{2\alpha}\left[\norm{\overline{x}^k-\tilde{x}^k}^2 + \norm{\alpha(\overline{w}^k+\nabla f(\tilde{x}^k))}^2 + \norm{\overline{x}^k-\tilde{x}^k}^2\right],
\end{split}
\end{equation*}
where the inequality above follows from the error criterion in step 2 of Algorithm \ref{alg:4}, \eqref{eq:12} and the definition of $\alpha$.
Combining the two relations above we conclude.
\end{proof}

The following result yields the convergence rate of the accelerated Algorithm \ref{alg:4}. The proof of this 
convergence result is similar to the proof of Lemma 3.4 of \cite{MonSvaAHPE}. For the sake of completeness, we 
provide its proof in the Appendix.

\begin{proposition} 
\label{prop:accel}
The sequences $(t_k)_{k\in\N}$, $(\overline{x}^k)_{k\in\N}$ and $(\tilde{x}^k)_{k\in\N}$ generated by Algorithm $\ref{alg:4}$ satisfy the following inequality for any $k\geq1$
\begin{equation}
\label{eq:9}
t_k\geq\dfrac{k^2\sigma^4(1-\sigma^2)}{4L},\qquad\qquad t_k(f+g)(\overline{x}_k)  \leq t_k\Ell_k(x) + \dfrac{1}{2}\norm{x-x^0}^2.
\end{equation}
\end{proposition}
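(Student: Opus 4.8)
The plan is to establish the two assertions separately: the lower bound on $t_k$ by a telescoping argument, and the main inequality by induction on $k$ built on the preceding lemma \eqref{eq:7}. The cornerstone of both is the identity $\beta_k^2 = \alpha(1-\sigma^2)t_k$, which I would first extract from the definition of $\beta_k$: isolating the radical as $2\beta_k - \alpha(1-\sigma^2) = \sqrt{\alpha^2(1-\sigma^2)^2 + 4\alpha(1-\sigma^2)t_{k-1}}$ and squaring gives $\beta_k^2 - \alpha(1-\sigma^2)\beta_k = \alpha(1-\sigma^2)t_{k-1}$, hence $\beta_k^2 = \alpha(1-\sigma^2)(t_{k-1}+\beta_k) = \alpha(1-\sigma^2)t_k$. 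From $t_k - t_{k-1} = \beta_k = \sqrt{\alpha(1-\sigma^2)t_k}$ I would then telescope $\sqrt{t_k}-\sqrt{t_{k-1}} = \frac{\beta_k}{\sqrt{t_k}+\sqrt{t_{k-1}}} \geq \frac{\beta_k}{2\sqrt{t_k}} = \frac{1}{2}\sqrt{\alpha(1-\sigma^2)}$, so that $\sqrt{t_k}\geq \frac{k}{2}\sqrt{\alpha(1-\sigma^2)}$ and $t_k\geq \frac{k^2}{4}\alpha(1-\sigma^2) = \frac{k^2\sigma^2(1-\sigma^2)}{4L}$ after substituting $\alpha=\sigma^2/L$. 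Since $\sigma^2<1/2$ forces $\sigma^4\leq\sigma^2$, this is even stronger than the stated bound $\frac{k^2\sigma^4(1-\sigma^2)}{4L}$.

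For the second inequality I would introduce $\gamma_k(x) := t_k\Ell_k(x) + \frac{1}{2}\norm{x-x^0}^2$. Because each $\Ell_k$ is affine, $\gamma_k$ is a quadratic with Hessian the identity, minimized at $x^k$ by the last relation in \eqref{eq:n1}; thus $\gamma_k(x) = \phi_k + \frac{1}{2}\norm{x-x^k}^2$ with $\phi_k := \gamma_k(x^k) = \min_x \gamma_k(x)$. Since the claimed estimate is required for every $x$, it is equivalent to $t_k(f+g)(\overline{x}^k)\leq \phi_k$. The decomposition $t_k\Ell_k = t_{k-1}\Ell_{k-1} + \beta_k\ell_k$ gives the recursion $\gamma_k = \gamma_{k-1} + \beta_k\ell_k = \phi_{k-1} + \frac{1}{2}\norm{x-x^{k-1}}^2 + \beta_k\ell_k(x)$; minimizing this in $x$ (using that $\ell_k$ is affine with slope $\overline{w}^k+\nabla f(\tilde{x}^k)$) reproduces exactly step 3, confirming $x^k=\arg\min_x\gamma_k$.

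The inductive step is the heart of the argument. Assuming $t_{k-1}(f+g)(\overline{x}^{k-1})\leq\phi_{k-1}$, I would bound $\phi_k = \min_x\{\phi_{k-1} + \frac{1}{2}\norm{x-x^{k-1}}^2 + \beta_k\ell_k(x)\}$ from below by replacing $\phi_{k-1}$ with $t_{k-1}(f+g)(\overline{x}^{k-1})$ and then using $\ell_k\leq f+g$ from \eqref{eq:n1} to write $t_{k-1}(f+g)(\overline{x}^{k-1})\geq t_{k-1}\ell_k(\overline{x}^{k-1})$. The key manoeuvre is the change of variable $z = \frac{t_{k-1}\overline{x}^{k-1}+\beta_k x}{t_k}$: affinity of $\ell_k$ yields $t_{k-1}\ell_k(\overline{x}^{k-1}) + \beta_k\ell_k(x) = t_k\ell_k(z)$, while the identity $t_{k-1}\overline{x}^{k-1}+\beta_k x^{k-1} = t_k\tilde{x}^k$ gives $x-x^{k-1} = \frac{t_k}{\beta_k}(z-\tilde{x}^k)$, so that $\frac{1}{2}\norm{x-x^{k-1}}^2 = \frac{t_k}{2\alpha(1-\sigma^2)}\norm{z-\tilde{x}^k}^2$ upon invoking $\beta_k^2=\alpha(1-\sigma^2)t_k$. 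This reduces everything to $\phi_k\geq t_k\min_z\{\ell_k(z) + \frac{1}{2\alpha(1-\sigma^2)}\norm{z-\tilde{x}^k}^2\}$, and lemma \eqref{eq:7} together with $1-2\sigma^2>0$ makes the bracket at least $(f+g)(\overline{x}^k)$, delivering $\phi_k\geq t_k(f+g)(\overline{x}^k)$. The base case $k=1$ emerges from the same computation, since $t_1=\beta_1$, $\tilde{x}^1=x^0$ and $\Ell_1=\ell_1$. I expect the main obstacle to be aligning the coefficients in the change of variable, namely recognizing that $\frac{t_k^2}{\beta_k^2} = \frac{t_k}{\alpha(1-\sigma^2)}$ is precisely the weight that turns $\frac{1}{2}\norm{x-x^{k-1}}^2$ into the quadratic term demanded by \eqref{eq:7}.
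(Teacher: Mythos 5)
Your proposal is correct and follows essentially the same route as the paper's proof in the Appendix: the same identity $\beta_k^2=\alpha(1-\sigma^2)t_k$, the same decomposition $t_k\Ell_k=t_{k-1}\Ell_{k-1}+\beta_k\ell_k$ with the quadratic completion around $x^{k-1}$, the same convex-combination change of variable feeding into inequality \eqref{eq:7}, and the same recursion on $\sqrt{t_k}$ for the lower bound (your telescoping is just the paper's induction $\sqrt{t_k}\geq\sqrt{t_{k-1}}+\tfrac12\sqrt{\alpha(1-\sigma^2)}$ restated, and your observation that one actually gets $\sigma^2$ in place of $\sigma^4$ matches what the paper's argument yields). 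The only cosmetic difference is that you run an induction on $\phi_k\geq t_k(f+g)(\overline{x}^k)$ where the paper telescopes the nondecreasing sequence $\eta_k-t_k(f+g)(\overline{x}^k)$ from $\eta_0-t_0(f+g)(\overline{x}^0)=0$; these are the same argument.
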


\begin{corollary}
For every integer $k\geq1$, we have
\begin{equation*}
(f+g)(\overline{x}_k) - s^\star \leq \dfrac{2Ld_0^2}{\sigma^4(1-\sigma^2)k^2}.
\end{equation*}
\end{corollary}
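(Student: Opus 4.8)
The plan is to combine the two inequalities furnished by Proposition \ref{prop:accel} with the minorization property of the aggregate affine function $\Ell_k$ recorded in \eqref{eq:n1}. First I would fix an arbitrary solution $x^\star\in S^\star$ and specialize the second inequality in \eqref{eq:9} by taking $x=x^\star$, which yields
\begin{equation*}
t_k(f+g)(\overline{x}_k) \leq t_k\Ell_k(x^\star) + \dfrac{1}{2}\norm{x^\star-x^0}^2.
\end{equation*}
The key observation is then that, by the second relation in \eqref{eq:n1}, we have $t_k\Ell_k\leq t_k(f+g)$ pointwise, and in particular $\Ell_k(x^\star)\leq (f+g)(x^\star)=s^\star$ since $x^\star$ is a minimizer. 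Substituting this bound into the display above and dividing by $t_k>0$ gives
\begin{equation*}
(f+g)(\overline{x}_k)-s^\star \leq \dfrac{\norm{x^\star-x^0}^2}{2t_k}.
\end{equation*}

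Next I would sharpen the right-hand side by making the optimal choice of $x^\star$. Taking $x^\star:=P_{S^\star}(x^0)$, the numerator becomes exactly $\norm{x^\star-x^0}^2=d_0^2$, where $d_0=\dist{x^0}{S^\star}$ as in the earlier complexity results. Finally, I would invoke the lower bound $t_k\geq k^2\sigma^4(1-\sigma^2)/(4L)$ from the first inequality in \eqref{eq:9} to control the denominator, obtaining
\begin{equation*}
(f+g)(\overline{x}_k)-s^\star \leq \dfrac{d_0^2}{2t_k} \leq \dfrac{d_0^2}{2}\cdot\dfrac{4L}{k^2\sigma^4(1-\sigma^2)} = \dfrac{2Ld_0^2}{\sigma^4(1-\sigma^2)k^2},
\end{equation*}
which is precisely the claimed estimate.

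There is no real obstacle here: the corollary is a direct bookkeeping consequence of Proposition \ref{prop:accel}, and essentially all the analytic work has already been absorbed into establishing \eqref{eq:9} and \eqref{eq:n1}. The only points requiring a word of care are the use of $\Ell_k(x^\star)\leq s^\star$ (which relies on $\Ell_k$ minorizing $f+g$ and on $x^\star$ being optimal, so that $(f+g)(x^\star)=s^\star$), and the legitimacy of dividing by $t_k$, which is justified since the recursion in step 1 of Algorithm \ref{alg:4} together with $t_0=0$ and $\alpha=\sigma^2/L>0$ forces $t_k>0$ for every $k\geq1$. Both are immediate given the material already developed.
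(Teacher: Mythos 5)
Your proposal is correct and follows exactly the paper's own argument: specialize the second inequality of \eqref{eq:9} at $x^\star=P_{S^\star}(x^0)$, bound $\Ell_k(x^\star)\leq s^\star$ via the second relation in \eqref{eq:n1}, and divide by $t_k$ using the lower bound from the first inequality of \eqref{eq:9}. Your added remark that $t_k>0$ for $k\geq1$ (so the division is legitimate) is a small point the paper leaves implicit, but otherwise the two proofs coincide.
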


\begin{proof}
The second inequality in \eqref{eq:9} with $x=x^\star$, the projection of $x_0$ into $S^\star$, yields
\begin{equation*}
t_k(f+g)(\overline{x}_k)  \leq t_k\Ell_k(x^\star) + \dfrac{1}{2}d_0^2.
\end{equation*}
Combining equation above with the second relation in equation \eqref{eq:n1} and the first inequality in \eqref{eq:9} we conclude.
\end{proof}

\section{Numerical experiments}\label{numer}

This section describes some simple, preliminary computational testing of the P$\epsilon$SMs. These experiments are not meant to be exhaustive, but only to provide a preliminary indication of the performance of these methods. 
First, we will test Algorithm \ref{alg:1} (P$\epsilon$SM1) in a classic $\ell_1-$regularization problem. Then, we will illustrate the performance of Algorithm \ref{alg:2} (P$\epsilon$SM2) when solving the Total Variation deblurring problem \cite{Villa2013,BecTeb09}.

\subsection{Example for Algorithm \ref{alg:1}}

In this section we show some numerical experiments to test the {\bf P$\epsilon$SM1} and compare it with \cite[Algorithm PSS]{BelloCruz2016}.  
 We are interested in implementing the  classic $\ell_1-$regularization for the quadratic minimization problem, which consists to solve the problem \eqref{eq:problem} with $f, g:\RR^n\rightarrow \RR$ defined by $f(x)=\frac{1}{2}\|Ax-b\|^2$ and $g(x)=\|x\|_1$, being $A\in \mathbb{M}^{n\times n}(\RR)$ a semi-definite positive random matrix, $b\in \RR^n$ is a vector with all entries equal to $1$ and for $x=(x_1,x_2,\cdots,x_n)$, define $\|\cdot\|_1$ by $\|x\|_1:=\sum_{i=1}^n|x_i|$. 
 
 For the implementation, we used the stopping criterion $\|x^{k+1}-x^k\|^2\leq 10^{-4}$, with $(x^k)_{k\in\NN}$ generated by the algorithm. For all $k\in\NN$ we utilized $\alpha_k=\frac{\alpha_0}{k}$ with $\alpha_0=\frac{1}{\|A\|^2}$, $\epsilon_k=1/k$ and $r_k=1/k$. For ``funcVal" we denote the value of the function $h(x)=f(x)+g(x)$ once the process stop, {\it `` iter"} denotes the number of iteration of the algorithm. By the initial point we take $x^0=(1,1,\cdots, 1)$. The computation of the inexact proximal with absolute error (step 2 in the P$\epsilon$SM1), was computed by using a line-search between the exact proximal and the point $y^k$ defined in step 1. 

We use MATLAB version R2016b on a PC  with Intel(R) Core(TM) i7-7500 CPU @ 2.90GHz and Windows 10 Home. The results for the example are listed in {\bf Table \ref{tab:1}}.

\begin{table}[H]
\begin{center}
\begin{tabular}{ |p{1cm}| p{1.7cm} p{1.8cm} p{1.5cm} |p{1.7cm} p{1.8cm} p{1.5cm}|  }
 \hline
 & {\bf P$\epsilon$SM1}  & &   &{\bf Alg PSS} &in \cite{BelloCruz2016}&  \\
 \hline
 $n$   & iter   & CPU time  & FuncVal & iter & CPU time & FuncVal \\
 \hline
 1& 14& 0.390625 &0.52036& 2&0.046875 &0.5\\
 5& 17& 0.03125 &1.88179& 10 & 0.015625 &1.97205\\
 10& 3 &0.015625 &1.89058& 3  & 0.03125& 1.95775 \\
 20&3 &0.015625 & 1.94297& 3  &0.015625 & 2.2872  \\
 40& 3&0.015625& 5.45126 &2 &0.015625&11.0227\\
 80& 3 &0.0625 &25.767 & 2 &0.03125 &85.6772 \\
 100& 3 &0.03125& 50.2412 & 2& 0.046875 & 175.411 \\
 200& 2 &0.015625& 768.378 &2& 0.046875 & 1348.37 \\
 500& 2 &0.078125& 12109 &2& 0.03125 & 21407.1 \\
 1000& 2 &0.140625& 96221.2 &2& 0.046875 & 170555 \\
 5000& 2 &9.67188& 1.204e+07 &2& 0.234375 & 2.140e+07 \\
 10000& 2 &94.9375& 9.64e+07 &2& 0.75 & 1.71e+08 \\
 \hline
\end{tabular}
\end{center}
\caption{Results for the Example}
\label{tab:1}
\end{table}

As the results show, we can see that the P$\epsilon$SM1 has competitive results compared with Algorithm PSS in \cite{BelloCruz2016}. The main advantage relies on the function-valued once the algorithm stops. For higher dimensional spaces, greater than 1000, the CPU time of our algorithm is worst than the compared one. In compensation, the value of the function is much better, which is an improvement. 

\subsection{Example for Algorithm \ref{alg:2}}
\label{sub:numer2}

In the sequel, an application of the P$\epsilon$SM2 to an image deblurring problem is considered. 
The main goal of this experiment is to illustrate the performance of the relative error criterion considered in the P$\epsilon$SM2 for approximately calculating the proximal operator.

The Total Variation deblurring problem seeks to estimate an unknown original image $x\in\R^{N\times N}$ from an observed blurred image $b\in\R^{N\times N}$, by solving the minimization problem
\begin{equation}
\label{eq:tv deblur}
\min_{x\in\R^{N\times N}}\, \frac{1}{2}\norm{Ax-b}^2_F + \tau\sum_{i,j=1}^N \norm{(\nabla x)_{i,j}}_2,
\end{equation}
where $\nabla: \R^{N\times N} \to \R^{N\times N} \times \R^{N\times N}$ is the \textit{discrete gradient} operator (see \cite{Cha04} for the precise definition), $\tau>0$ is a regularization parameter, $\norm{\cdot}_2$ is the Euclidean norm in $\R^2$ and $A:\R^{N\times N}\to\R^{N\times N}$ is a linear operator that represents some blurring operator.

To solve the problem \eqref{eq:tv deblur} via the P$\epsilon$SM2, we take $C=\R^{N\times N}$, $f(x)=\frac{1}{2}\norm{Ax-b}^2_F$ and $g(x)=\tau\sum_{i,j=1}^N \norm{(\nabla x)_{i,j}}_2$. In step 1, since $f$ is differentiable, we take $\epsilon_k=0$ and $u^k=\nabla f(x^k)$ for all iteration $k$.
In order to obtain a triplet $(\overline{x}^k,\overline{w}^k,\overline{\epsilon}_k)$ such that the relative error condition in step 2 of the P$\epsilon$SM2 is met, we use the dual approach presented in \cite{Villa2013}, which we briefly discuss here. 

First, we observe that $g(x)=\omega(\nabla x)$, where $\omega:\R^{N\times N} \times \R^{N\times N}\to\R$, $\omega(p)=\tau\sum_{i,j=1}^N \norm{p_{i,j}}_2$. Also, we note that by definition, for finding $\textbf{prox}_{\alpha g}(y)$ it is necessary to solve the optimization problem 
\begin{equation}
\label{eq:num2}
\min_{x\in\HH} \Phi_\alpha(x),
\end{equation}
where $\Phi_\alpha(x)=g(x) + \frac{1}{2\alpha}\norm{x-y}^2$. The \emph{dual function} of \eqref{eq:num2} is defined as $\Psi_\alpha(v)=\frac{1}{2\alpha}\norm{\alpha\nabla^*v-y}^2+\omega^*(v)-\frac{1}{2\alpha}\norm{y}^2$, where $\omega^*$ is the \emph{Fenchel-Legendre conjugate} \cite{lemarechal_2} of $\omega$, and the \emph{duality gap} is $G(x,v)=\Phi_\alpha(x)+\Psi_\alpha(v)$.
In \cite[Proposition 2.2]{Villa2013}, it was proved that if
\begin{equation}
\label{eq:num1}
G(y-\alpha\nabla^*v,v)\leq\frac{\epsilon^2}{2\alpha},\quad \text{then}\quad \nabla^*v\in\partial_{\frac{\epsilon^2}{2\alpha}}g(y-\alpha\nabla^*v).
\end{equation} 
Moreover, if $(v^l)_{l\in\N}$ is a minimizing sequence for $\Psi_\alpha$, then $z^l=y-\alpha\nabla^*v^l$ is such that $z^l\to\textbf{prox}_{\alpha g}(y)$ and $G(z^l,v^l)\to0$ (see \cite[Theorem 6.1]{Villa2013}).

Now, we observe that if at each iteration $k$ we have a sequence $(v^l)_{l\in\N}\subset\R^{N\times N}$ and we take $\overline{x}^l=y_k-\alpha_k\nabla^*v^l$, $\overline{w}^l=\nabla^*v^l$ and $\overline{\epsilon}_l=G(y_k-\alpha_k\nabla^*v_l,v_l)$, then from \eqref{eq:num1} it follows that $\overline{w}^l\in\partial_{\overline{\epsilon}_l}g(\overline{x}^l)$. Therefore, for each $l$ we have an available triplet $(\overline{x}^l,\overline{w}^l,\overline{\epsilon}_l)$ to test the error condition in step 2 of the P$\epsilon$SM2. Moreover, if $(v^l)_{l\in\N}$ is a minimizing sequence for the dual function $\Psi_{\alpha_k}$, then, whenever $y^k$ is not a fixed point of $\textbf{prox}_{\alpha_kg}$, we have that there exists a finite $l(k)$ such that $(\overline{x}^{l(k)},\overline{w}^{l(k)},\overline{\epsilon}_{l(k)})$ satisfies the inequality in step 2 of the P$\epsilon$SM2.

Since $C=\R^{N\times N}$, the update rule in step 3 of the P$\epsilon$SM2 is reduced to $x^{k+1}=y^k-\alpha_k\overline{w}^k=\overline{x}^k$, where the second equality is due to the choice of $\overline{x}^k$ and $\overline{w}^k$ discussed above. 

To provide a comparison, we also implemented the inexact proximal gradient method (IPGM) proposed in \cite{schmidt2011}, which is the method described in \eqref{eq:accel-1} with $\beta_k=0$ for all $k$.  In order to obtain at each iteration $k$ an approximate solution of $\textbf{prox}_{\alpha_kg}$ in the sense of \eqref{eq:aprox}, we can also use the dual approach of \cite{Villa2013}. Indeed, if $v$ is such that the first inequality in \eqref{eq:num1} holds, then taking $x'=y-\alpha\nabla^*v$, from the inclusion in \eqref{eq:num1} we have that equation \eqref{eq:aprox} holds with $e=\frac{\epsilon^2}{2\alpha}$. Thus, if at each iteration $k$ we have a minimizing sequence $(v^l)_{l\in\N}$ for $\Psi_{\alpha_k}$, for a given $e_k>0$ there exists a finite $l(k)$ such that $x^{l(k)}=y^k-\alpha_k\nabla^*v^{l(k)}$ is an $e_k$-optimal solution of $\textbf{prox}_{\alpha_kg}(y^k)$\footnote{We stress that, for the IPGM, $y^k=x^{k-1}-\alpha_k\nabla f(x^{k-1})$ at each iteration $k$.}.

We observe that, due to the choice of $\overline{x}^k$ and the fact that $f$ is differentiable and $C=\R^{N\times N}$, the only difference between the P$\epsilon$SM2 and the IPGM to solve the problem \eqref{eq:tv deblur} is how the approximation of the proximal operator is calculated, and this is the dominant computation at each iteration of both methods. Therefore, these experiments are actually comparing the performances of the relative error criterion and the absolute error criterion considered, respectively, in each method.

We implemented both methods in Matlab code. In our implementation, at each iteration $k$ we use the \emph{fast iterative shrinkage-thresholding} algorithm (FISTA) of \cite{fista} to minimize the dual function $\Psi_{\alpha_k}$, as suggested in \cite{Villa2013} (see (6.8) of \cite{Villa2013}). 
For the P$\epsilon$SM2, we stop the inner method when either the relative error condition of step 2 is satisfied or it reaches a maximum number of iterations. For the IPGM, we choose a sequence $(e_k)_{k\in\N}$ and, at each iteration $k$, we stop the inner algorithm when either the duality gap is less than $e_k$ or it reaches a maximum number of iterations. According to the convergence analysis of \cite{schmidt2011}, we consider sequences of errors of type $\sqrt{e_k}=\mathcal{O}(1/k^q)$ with $q>1$, which guarantees the convergence of the IPGM, see \cite[Proposition 1]{schmidt2011} and the comments below. We set as $3000$ the maximum number of iterations for FISTA, in both algorithms.

In the numerical simulations, we consider the $256\times256$ \emph{Cameraman} test image blurred by a $4\times4$ Gaussian blur with standard deviation $2$ followed by an additive normal noise with zero mean and standard deviation $10^{-4}$. The regularization parameter $\tau$ was set to $10^{-4}$. We fix $\alpha_k=\alpha=1/L$, where $L=\norm{A^*A}$, and we choose $x^0=b$. The stopping criterion for the P$\epsilon$SM2 and IPGM is that they either reach $5000$ iterations or the relative difference $\norm{x^k-x^{k-1}}/\norm{x^k}$ is less than $10^{-4}$. We test the P$\epsilon$SM2 for various values of $\sigma^2$ chosen between $0.1$ and $0.9$. For the IPGM we consider two sequences of error, $\sqrt{e_k}=1/k^q$ and $
\sqrt{e_k}=C/k^q$, with $q$ chosen between $1.1$ and $1.9$. In the computational experiments, we refer to the variant of IPGM with $\sqrt{e_k}=1/k^q$ as IPGM1, and with $\sqrt{e_k}=C/k^q$ as IPGM2. The coefficient $C$ is chosen by solving the equation $G(y^0-\alpha\nabla f(y^0),
0)=C^2/(2\alpha)$, following \cite{Villa2013}.

Table \ref{tab:2} displays the results of the experiments. The columns mean: Method, the algorithm used (P$\epsilon$SM2 with the corresponding value of $\sigma$ and IPGM with the corresponding sequence of error and value of $q$); RelDiff, the relative difference at the last iterate; CPU time, the time (in seconds) needed  to  reach the desired accuracy for the relative difference; FuncVal, the function value at the last iterate; ExtIt, the number of external iterations; and IntIt, the total number of inner iterations.

\begin{table}[H]
\begin{center}
\begin{tabular}{c|ccr|cr}
\hline
Method & RelDiff & FuncVal & CPU time & ExtIt & IntIt\\
\hline
P$\epsilon$SM2 &   &  &  &  & \\
$\sigma^2=0.9$ & $0.00009950$	& $0.36002654$ & $151.6111$ & $172$ &	$55759$\\
\hline
$\sigma^2=0.7$ & $0.00009951$ & $0.35998493$ & $165.3417$ & $172$ & $59823$\\
\hline
$\sigma^2=0.5$ & $0.00009952$ & $0.35995523$ & $181.8645$ & $172$ & $65857$\\
\hline
$\sigma^2=0.3$ & $0.00009954$ & $0.35994516$ & $215.6910$ & $172$ & $76445$\\
\hline
$\sigma^2=0.1$ & $0.00009955$ & $0.35987295$ & $306.3176$ & $172$ & $107035$\\
\hline
IPGM1: $\sqrt{e_k}=1/k^q$ &    &   &  &  & \\
$q=1.1$ & $0.00009956$ & $0.35986216$ & $336.4117$ & $172$ & $126437$\\
\hline
$q=1.3$ & $0.00009956$ & $0.35984462$ & $795.8492$ & $172$ & $288798$\\
\hline
$q=1.5$ & $0.00009957$ & $0.35985049$ & $1039.2948$ & $172$ & $379107$\\
\hline
$q=1.7$ & $0.00009957$ & $0.35984905$ & $1200.7429$ & $172$ & $422907$\\
\hline
$q=1.9$ & $0.00009957$ & $0.35987493$ & $1423.3937$ & $172$ & $455606$\\
\hline
IPGM2: $\sqrt{e_k}=C/k^q$ &    &   &  &  & \\
$q=1.1$ & $0.00009956$ & $0.35991933$ & $137.6724$ & $172$ & $49961$\\
\hline
$q=1.3$ & $0.00009955$ & $0.35986663$ & $252.2872$ & $172$ & $91017$ \\
\hline
$q=1.5$ & $0.00009956$ & $0.35986485$ & $669.2156$ & $172$ & $232471$ \\
\hline
$q=1.7$ & $0.00009957$ & $0.35987348$ & $986.3419$ & $172$ & $340939$\\
\hline
$q=1.9$ & $0.00009956$ & $0.35984404$ & $1244.5066$ & $172$ & $395763$\\
\hline
\end{tabular}
\end{center}
\caption{Results for the Total Variation deblurring problem.}
\label{tab:2}
\end{table}

We observe that all methods executed the same number of external iterations and they reached similar functional values. We also observe that the fastest method was the IPGM2 with $q=1.1$. However, it is not much faster than the P$\epsilon$SM2 with $\sigma^2=0.9$. In addition, we note that as we increase the precision for the error criterion of the P$\epsilon$SM2, the time to reach the desired accuracy gradually increases.  Furthermore, the P$\epsilon$SM2 with the highest accuracy ($\sigma^2=0.1$) is just two times slower than with the smallest accuracy ($\sigma^2=0.9$). In contrast, the performance of the IPGM depends greatly on the precision for the error criterion. Indeed, we observe that the processing time of the IPGM2 with accuracy $q=1.3$ is almost twice the processing time with accuracy $q=1.1$, and the IPGM2 with the highest accuracy ($q=1.9$) is at least nine times slower than with the smallest accuracy ($q=1.1$). 

This difference in performance is less evident in the IPGM1 where, while the processing time 
with $q=1.3$ is also twice the processing time with $q=1.1$, the processing time with the highest accuracy is less than five times the processing time with the smallest accuracy. However, as 
we observed in the numerical experiments, this was due to the fact that for the IPGM1 
with greater precisions, in the majority of iterations, FISTA was stopping because it reached the maximum 
number of iterations instead of by the stopping rule given by the duality gap. Indeed, for 
instance, we observe that the ratio between the internal and external iterations for $q=1.9$ is 
$2648$ and for $q=1.7$ is $2458$. Thus, these experiments suggest that the relative 
error criterion of the P$\epsilon$SM2 is less sensitive to the accuracy of the error, while 
maintaining the overall efficiency of the method.

Finally, we would like to point out the difference in performance between the IPGM1 and the IPGM2. Whereas the IPGM2 with $q=1.1$ was the fastest method, the IPGM1 with the smallest accuracy $q=1.1$ is slow, even slower than the P$\epsilon$SM2 with the highest precision $\sigma^2=0.9$. This evidences the great sensitivity of the IPGM to the choice of the sequence of errors for the absolute error criterion, which must be chosen in advance.

\section{Conclusions}\label{sec:con}
In this paper, we present three inexact algorithms for solving the constrained optimization problem when the objective function is the sum of two convex functions. The main difference between the algorithms is a way of computing the inexact proximal operator. Also, for relaxing the known schemes in the literature, we consider the enlargement of the subdifferential in each case. The first two methods, considered in Hilbert spaces, present three different strategies for choosing the step size, with particular behavior in each case. The last algorithm, considered in the finite-dimensional Euclidean spaces, is an accelerated version, in the sense of Nesterov's scheme, for improving the convergence properties. In all cases, the convergence analysis is studied. 

Our preliminary numerical experiments show that our algorithms have a competitive performance with respect to similar algorithms in the literature and suggest an advantage in the use of relative error criteria for computing approximations of the proximal operator. To confirm these hypotheses, the next step should be to experiment with a larger sample of problems.
 
The motivation of relaxing the existing methods in the literature comes from the large applicability in practical problems. 

\section*{Acknowledgments}
This work was partially completed while M.P.M was supported by a CAPES post-doctoral fellowship at the University of Campinas. M.P.M is very grateful to IMECC at the University of Campinas and especially to Professor Sandra Augusta Santos for the warm hospitality.

\section*{Appendix}
\noindent {\it Proof of Proposition \ref{prop:accel}.}
We first define, for all $k\geq0$,
\begin{equation}
\label{eq:6}
\eta_k : = \min_{x\in\R^n}\{t_k\Ell_k(x)+1/2\norm{x-x^0}^2\},
\end{equation}
and prove that $\eta_{k+1}-\eta_k\geq t_{k+1}(f+g)(\overline{x}^{k+1}) - t_k(f+g)(\overline{x}^k)$.
To do this, we observe that 
\begin{equation}
\label{eq:13}
\begin{split}
\eta_{k+1} = & t_{k+1}\Ell_{k+1}(x^{k+1}) + \dfrac12\norm{x^{k+1}-x^0}^2\\
 = & \beta_{k+1} \ell_{k+1}(x^{k+1}) + t_{k}\Ell_{k}(x^{k+1}) +  \dfrac12\norm{x^{k+1}-x^0}^2,
\end{split}
\end{equation}
where the first equality above follows from the last relation in \eqref{eq:n1}, and the second equality is a consequence of the definition of $\Ell_k$.

Next, we note that the definition of $\eta_k$, together with the fact that the function in the minimization problem in \eqref{eq:6} is quadratic, implies that
$$t_{k}\Ell_{k}(x^{k+1}) +  \dfrac12\norm{x^{k+1}-x^0}^2 = \eta_{k}  + \dfrac12\norm{x-x^{k}}^2.$$ 
Therefore, combining this latter relation with \eqref{eq:13} we obtain
\begin{equation}
\label{eq:n2}
\begin{split}
\eta_{k+1} = & \beta_{k+1}\ell_{k+}(x^{k+1}) + \eta_{k}  + \dfrac12\norm{x-x^{k}}^2\\
= & \eta_{k} - t_{k}(f+g)(\overline{x}^{k}) + \beta_{k+1}\ell_{k+1}(x^{k+1}) + t_{k}(f+g)(\overline{x}^{k})\\
& + \dfrac{1}{2}\norm{x^{k+1}-x^{k}}^2\\
\geq & \eta_{k} - t_{k}(f+g)(\overline{x}^{k}) + \beta_{k+1}\ell_{k+1}(x^{k+1}) + t_{k}\ell_{k+1}(\overline{x}^{k})\\
& + \dfrac{1}{2}\norm{x^{k+1}-x^{k}}^2,
\end{split}
\end{equation}
where the inequality above is due to the first relation in \eqref{eq:n1}.

Now, from the definition of $t_{k+1}$ and because $\ell_{k+1}$ is affine, we have 
\begin{equation*}
\beta_{k+1}\ell_{k+1}(x^{k+1}) + t_{k}\ell_{k+1}(\overline{x}^{k}) = t_{k+1}\ell_{k+1}\left(\dfrac{\beta_{k+1}}{t_{k+1}}x^{k+1} + \dfrac{t_k}{t_{k+1}}\overline{x}^k\right).
\end{equation*}
Moreover, denoting $x = \dfrac{\beta_{k+1}}{t_{k+1}}x^{k+1} + \dfrac{t_k}{t_{k+1}}\overline{x}^k$ and using the definition of $\tilde{x}^{k+1}$ in step 1 of Algorithm \ref{alg:4}, we have $x^{k+1} - x^k = \dfrac{t_{k+1}}{\beta_{k+1}}(x-\tilde{x}^{k+1})$. Therefore, combining these relations with equation \eqref{eq:n2} we obtain
\begin{equation*}
\begin{split}
\eta_{k+1} \geq & \eta_k -t_k(f+g)(\overline{x}^k) + t_{k+1}\ell_{k+1}(x) + \dfrac{t_{k+1}^2}{2\beta_{k+1}^2}\norm{x-\tilde{x}^{k+1}}^2\\
= & \eta_k - t_k(f+g)(\overline{x}^k) + t_{k+1}\left(\ell_{k+1}(x) + \dfrac{1}{2\alpha(1-\sigma^2)}\norm{x-\tilde{x}^{k+1}}^2\right),
\end{split}
\end{equation*}
where we used in the equality above the definition of $\beta_{k+1}$. By \eqref{eq:7} and the assumption that $\sigma^2<1/2$ we conclude our claim. 

Now, we observe that since the sequence $(\eta_k-t_k(f+g)(\overline{x}^k))_{k\in\N}$ is non-decreasing, we have, for all $k\geq1$, that 
\begin{equation*}
\eta_k - t_k(f+g)(\overline{x}^k) \geq \eta_0 - t_0(f+g)(\overline{x}^0) = 0.
\end{equation*} 
Hence, using \eqref{eq:6} we deduce the second inequality in \eqref{eq:9}.

To prove the first inequality in \eqref{eq:9}, we note that from the definitions of $t_k$ and $\beta_k$ it follows that
\begin{equation*}
t_k \geq t_{k-1} + \dfrac{\alpha(1-\sigma^2)}{2} + \sqrt{\alpha(1-\sigma^2)t_{k-1}} \geq (\sqrt{t_{k-1}} +\dfrac12\sqrt{\alpha(1-\sigma^2)} )^2.
\end{equation*} 
Thus, we conclude using that $\alpha=\sigma^2/L$ and an induction argument. \hfill$\qed$

\end{document}